\numberwithin{equation}{section}
\newtheorem{theorem}{Theorem}[section]
\newtheorem{lemma}[theorem]{Lemma}
\newtheorem{corollary}[theorem]{Corollary}
\newtheorem{proposition}[theorem]{Proposition}
\theoremstyle{definition}
\newtheorem{remark}[theorem]{Remark}
\theoremstyle{remark}
\newcommand{\R}{\mathbb{R}}
\def\be{\begin{equation}}
\def\ee{\end{equation}}
\def\ba{\begin{eqnarray}}
\def\ea{\end{eqnarray}}
\def\vf{\varphi}
\numberwithin{equation}{section}
\begin{document}

	\pagenumbering{arabic}	
\title[Stabilization and control of the NLS--KdV system]{Control results for a model of resonant interaction between short and long capillary-gravity waves}
\author[Capistrano-Filho]{Roberto de A. Capistrano-Filho*}
\address{Departamento de Matem\'atica,  Universidade Federal de Pernambuco (UFPE), 50740-545, Recife (PE), Brazil.}
\email{roberto.capistranofilho@ufpe.br}

\author[Pampu]{Ademir B. Pampu}
\address{Universidade Estadual da Paraíba - Centro de Ciências exatas e sociais aplicadas, 58706-550, Patos (PB), Brazil.}
\email{ademir@servidor.uebp.edu.br}
\thanks{*Corresponding author: roberto.capistranofilho@ufpe.br}
\subjclass[2020]{35Q55, 35Q53, 93B05, 93D15, 35A21} 
\keywords{Bourgain spaces · Global control results · Propagation of compactness · Propagation of regularity ·  Unique continuation property · Schrödinger--KdV system}
%\date{Version 2021-04-05}

\begin{abstract}
The purpose of this article is the investigation of the global control properties of a coupled nonlinear dispersive system posed in the periodic domain $\mathbb{T}$, a system with the structure of a nonlinear Schr\"odinger equation and a nonlinear  Korteweg-de Vries equation. Combining estimates derived from Bourgain spaces and using microlocal analysis we show that this system has global control properties. The main novelty of this work is twofold. One is that the global results for the nonlinear system are presented for the first time thanks to the propagation of singularities. The second one is that these propagation results are shown to a coupled dispersive system with two equations defined by differential operators with principal symbols of different orders.
\end{abstract}
\maketitle

\section{Introduction}

\subsection{Setting of the problem}Our work is related to global control properties of a system composed by a nonlinear Schr\"odinger equation and a nonlinear Korteweg-de Vries equation
\begin{equation}\label{prob-intro}
\left\{\begin{array}{ll}
i \partial_{t} u+\partial_{x}^{2} u = i \partial_{x} v+\beta|u|^{2} u, & (x,t)\in\mathbb{T} \times \mathbb{R}_{+},\\
\partial_{t} v+\partial_{x}^{3} v + \frac{1}{2} \partial_{x}\left(v^{2}\right) +\mu\partial_{x}v = \operatorname{Re}\left(\partial_{x} u\right),  & (x,t)\in \mathbb{T} \times \mathbb{R}_{+},\\
u(x, 0)=u_{0}(x), \quad v(x, 0)=v_{0}(x), & x \in \mathbb{T},
\end{array}\right.
\end{equation} 
where $u = u(x,t)$ is a complex valued function and $v = v(x, t)$ is a real valued function.   
The nonlinear Schr\"odinger--Korteweg-de Vries system (NLS--KdV) appears in the study of resonant interaction between short and long capillary-gravity waves on water of uniform finite depth, in plasma physics and a diatomic lattice system. Here, $u$ represents the short wave, while $v$ stands for the long wave, see e.g. \cite{AlbertPava,Arbieto,Beney,CorchoLinares} and the references therein for more details about the physical motivation for this system.

The first goal of the manuscript is to establish well-posedness results for the system \eqref{prob-intro}. To do so the main ingredient is a fixed point argument in the Bourgain spaces associated with the linear Schr\"odinger and linear Korteweg-de Vries equations. Once we have the global well-posedness of the NLS--KdV system, we can consider the system \eqref{prob-intro} from a control point of view with two forcing terms $f$ and $g$ added in each equation
\begin{equation}\label{prob-intro-contr}
\left\{\begin{array}{ll}
i \partial_{t} u+\partial_{x}^{2} u = i \partial_{x} v+\beta|u|^{2} u + f, & (x,t)\in\mathbb{T} \times (0,T),\\
\partial_{t} v+\partial_{x}^{3} v + \frac{1}{2} \partial_{x}\left(v^{2}\right) + \mu\partial_{x}v = \operatorname{Re}\left(\partial_{x} u\right) + g,  & (x,t)\in \mathbb{T} \times (0,T),\\
u(x, 0)=u_{0}(x), \quad v(x, 0)=v_{0}(x), & x \in \mathbb{T},
\end{array}\right.
\end{equation}
where $f$ and $g$ are assumed to be supported in a non-empty subset $\omega \subset \mathbb{T}$. Here, the main point is to use microlocal analysis to prove the results of the propagation of singularities, which is the key point to proving global control results. The main difficulty to prove these propagations is related to the fact that we have a coupled system defined by two differential operators with principal symbols of different orders.

It is not of our acknowledgment any global control results for the nonlinear NLS--KdV system \eqref{prob-intro-contr}. However,  for the linear system control problems are considered in \cite{Arauna}. Precisely, the authors treated a linear Schrödinger--Korteweg-de Vries system 
$$
\left\{\begin{array}{lll}
i \partial_tw+\partial^2_xw=a_{1} w+a_{2} y+h \mathbf{1}_{\omega} & \text { in } & Q \\
\partial_ty+\partial^3_xy+\partial_x(M y)=\operatorname{Re}\left(a_{3} w\right)+a_{4} y +l\mathbf{1}_{\omega}  & \text { in } & Q \\
w(0, t)=w(1, t)=0 & \text { in } & (0, T), \\
y(0, t)=y(1, t)=\partial_xy(1, t)=0 & \text { in } & (0, T), \\
w(x, 0)=w_{0}(x), \quad y(x, 0)=y_{0}(x) & \text { in } & (0,1)
\end{array}\right.
$$
in a bounded domain $Q:=(0,1)\times(0,T)$ with a purely real or a purely imaginary control $h$ acting in the Schrödinger equation and a control $l$ acting in the KdV equation. Thanks to the Carleman estimates they proved an observability inequality that helps them achieve the result.  However, the following questions naturally arise:

\vspace{0.2cm}

\noindent\textbf{Control problems:} \textit{What can be said about the global controllability for system \eqref{prob-intro-contr}? Could it be possible to find appropriate damping mechanisms to stabilize this system ?}

\vspace{0.2cm}

In this way, our work will provide answers to these questions for the nonlinear NLS--KdV system \eqref{prob-intro-contr}. Although these issues are typical in control theory and have been the subject of study in several single equations over the past 30 years, the controllability problems involving nonlinear coupled dispersive systems still are not well understood. 

\subsection{Known results for single equations} Let us present a review of the control and stabilization results for the KdV and NLS equations. We caution that this is only a small sample of the extant works on these equations.

\subsubsection{KdV equation}Russell and Zhang are the pioneers in the study of control problems to the KdV equation \cite{Russel93,Russel96}. They treated the following KdV system 
\begin{equation}
u_{t}+uu_{x}+u_{xxx}=f\text{, }
\label{I5}%
\end{equation}
with periodic boundary conditions and an internal control $f$. Since then, both controllability and stabilization problems have been intensively studied. We can cite, for instance, the exact boundary controllability of KdV on a bounded domain \cite{cerpa,cerpa1,GG,Rosier,Zhang2} and the internal control problem  \cite{CaPaRo}, among others.

It is well known that the KdV system \eqref{I5} has at least the following conserved integral quantities
$$I_1(t)=\int_{\mathbb{T}}u(x,t)dx, \quad  I_2(t)=\int_{\mathbb{T}}u^2(x,t)dx \quad \text{and} \quad  I_3(t)=\int_{\mathbb{T}}\left(u_x^2(x,t)-\frac{1}{3}u^3(x,t)\right)dx.$$
From the historical origins of the KdV equation involving the behavior of water waves in a shallow channel, it is natural to think that $I_{1}$ and $I_{2}$  express conservation of \textit{volume (or mass)} and \textit{energy}, respectively.

The pioneering work in the periodic case is due to Russel and Zhang \cite{Russel93} and it is purely linear.  After some years and the discovery of a subtle smoothing property of solutions of the KdV equation due to Bourgain \cite{Bourgain1},  the authors can extend their results to the nonlinear system \cite{Russel96}. Precisely,  the authors studied  the equation \eqref{I5} assuming $f$ supported in a given open set $\omega
\subset\mathbb{T}$ and taking the control input $f(x,t)$ as follows
\begin{equation}
f\left(  x,t\right)  =\left[  Gh\right]  \left(  x,t\right)  :=g\left(
x\right)  \left(  h\left(  x,t\right)  -\int_{\mathbb{T}}g\left(  y\right)
h\left(  y,t\right)  dy\right),\label{I8}%
\end{equation}
where $h$ is considered as a new control input, and $g(x)$ is a given
non-negative smooth function such that $\{g>0\}=\omega$ and
\[
2\pi\left[  g\right]  =\int_{\mathbb{T}}g\left(  x\right)  dx=1\text{.}%
\]
For the chosen $g$, it is easy to see that
\[
\frac{d}{dt}\int_{\mathbb{T}}u\left(  x,t\right)  dx=\int_{\mathbb{T}}f\left(
x,t\right)  dx=0,
\]
for any $t\in\mathbb{R}$ and for any solution $u=u(x,t)$ of the system
\begin{equation}
u_{t}+uu_{x}+u_{xxx}=Gh. \label{I9}%
\end{equation}
Thus, the mass of the system is indeed conserved. With this in hand Russell and Zhang were able to show the local exact controllability and local exponential stabilizability for the system \eqref{I9}. Indeed, the results presented in \cite{Russel96} are essentially linear; they are more or less small perturbations of the linear results. However, Laurent \textit{et al.} in \cite{Laurent} showed global results for the system \eqref{I9}. The global control results are established with the aid of certain properties of propagation of compactness and regularity in Bourgain spaces.

\subsubsection{NLS equation} Consider the following equation
\begin{equation}\label{NLSCR}
i\partial_tu + \Delta u=\lambda |u|^2u,\quad (x,t)\in \mathcal{M}\times \mathbb{R}.
\end{equation}
The first results to the system \eqref{NLSCR}, when $\mathcal{M}$ a compact Riemannian manifold of dimension $2$ without boundary, is due to Dehman \textit{et al.} in \cite{dehman-gerard-lebeau}.  The authors considered the stabilization and exact controllability problem for NLS. Precisely, to prove the control properties, the authors were able to prove the propagation results in $\mathcal{M}$. However, these properties are shown considering $\omega$ be an open subset of $\mathcal{M}$ and the following two assumptions: 
\begin{itemize}
       \item[(A)] $\omega$ geometrically controls \(\mathcal{M}\); i.e. there exists \(T_{0}>0,\) such that every geodesic of \(\mathcal{M}\)
traveling with speed 1 and issued at \(t=0,\) enters the set \(\omega\) in a time \(t<T_{0}\) .
       
       \item[(B)] For every \(T>0,\) the only solution lying in the space \(C[0, T], H^{1}(\mathcal{M}) )\) of the system 
       \begin{equation*}
       \begin{cases}
i \partial_{t} u+\Delta u+b_{1}(x,t) u+b_{2}(x,t) \overline{u},& (x,t)\in\mathcal{M}\times(0,T),\\
u=0,& (x,t)\in \omega\times( 0, T) ,\end{cases}
\end{equation*}
where $b_{1}(t, x)$ and $b_{2}(t, x) \in L^{\infty}\left(0, T, L^{p}(\mathcal{M})\right)$ for some \(p>0\) large enough, is the trivial
one \(u \equiv 0\).
\end{itemize}

Considering the NLS on a periodic domain $\mathbb{T}$ with Dirichlet or Neumann boundary conditions, Laurent \cite{Laurent-esaim} applied the method introduced by Dehman \textit{et al.} to prove that this system is globally internally controllable.

When a compact Riemannian manifold of dimension $d \geq 3$ is considered, Strichartz estimates do not yield uniform well-posedness results at the energy level for the NLS equation, a property which seems to be very important to prove controllability results. In this way, Burq \textit{et al.} in two works \cite{Burq1,Burq2} managed to introduce the Bourgain spaces $X^{s, b}$ on certain manifolds without boundary where the bilinear Strichartz estimates can be shown and, consequently, they get the uniform well-posedness for the NLS equation. Taking advantage of these results, Laurent \cite{Laurent-siam} proved that the geometric control condition $(\mathcal{A})$  is sufficient to prove the exact controllability for the NLS in $X^{s, b}$ spaces on some three-dimensional Riemannian compact manifolds. Similarly in this work, we mention \cite{Rosier-Zhang} and \cite{Laurent-esaim} where controllability results were studied for the NLS in Euclidean and periodic domains, respectively, relying on the properties of the Bourgain spaces.

\subsection{Main results} Note that to deal with the nonlinearity associated with the KdV part in \eqref{prob-intro-contr}, we must use appropriated estimates which requires that 
the mean value of $v$ satisfying
$$
[v] = \frac{1}{2\pi}\int_{\mathbb{T}}{v(x, t)}dx=0, \quad \forall t\geq0.
$$
If we set $\mu := [v_{0}]$, we get from the second equation in \eqref{prob-intro-contr} that 
\begin{eqnarray*}
\tilde{\mu} := [v_{0}] = \frac{1}{2\pi}\int_{\mathbb{T}}{v(x, 0)}dx = \frac{1}{2\pi}\int_{\mathbb{T}}{v(x, t)}dx, \quad \forall t>0.
\end{eqnarray*}
Thus it is convenient to set $\widetilde{v} = v - \tilde{\mu}$ and to  study the following equivalent system 
\begin{equation*}
\left\{\begin{array}{ll}
i \partial_{t} u+\partial_{x}^{2} u =i \partial_{x}\widetilde{v}+\beta|u|^{2} u +f, & (x,t)\in\mathbb{T} \times (0,T),\\
\partial_{t}\widetilde{v}+\partial_{x}^{3}\widetilde{v}+ \frac{1}{2} \partial_{x}\left(\widetilde{v}^{2}\right)+ (\mu + \tilde{\mu}) \partial_{x} \widetilde{v}=\operatorname{Re}\left(\partial_{x} u\right)+g, & (x,t)\in\mathbb{T} \times (0,T),\\
u(x, 0)=u_{0}(x), \quad v(x, 0)=\widetilde{v}_{0}(x), & x \in \mathbb{T},
\end{array}\right.
\end{equation*}
where $\mu \in \mathbb{R}$ is a constant. 

Now consider  $a \in L^{\infty}(\mathbb{T})$ a real-valued function such that
\begin{eqnarray}\label{Def-a}
a(x)^{2} > \eta > 0,
\end{eqnarray}
in some non-empty open set $\omega \subset \mathbb{T}$ and the operator $G$ defined as in \cite{Russel93}, for some $g \in C^{\infty}(\mathbb{T})$ real-valued function such that $g > 0$ in $\omega \subset \mathbb{T}$, as 
\begin{eqnarray}\label{Def-G}
Gh(x, t) := g(x)\left(h(x,t) - \int_{\mathbb{T}}{g(y)h(y,t)}dy\right),
\end{eqnarray}
where $h$ is any function considered as a control input. In order to stabilize our system we have choosing  $f:=-i a(x)^{2} u$ and  $g:=Gh$, with  $h=-G^*\tilde{v}$,  so the following closed-loop system reads
\begin{equation}\label{prob-diss}
\left\{\begin{array}{ll}
i \partial_{t} u+\partial_{x}^{2} u =i \partial_{x} v+\beta|u|^{2} u-ia(x)^2u, & (x,t)\in\mathbb{T} \times \mathbb{R}_+,\\
\partial_{t} \tilde{v}+\partial_{x}^{3}\tilde{v}+\frac{1}{2} \partial_{x}\left(\tilde{v}^{2}\right) +  (\mu + \tilde{\mu}) \partial_{x}\tilde{v} = \operatorname{Re}\left(\partial_{x} u\right) -GG^*\tilde{v}, & (x,t)\in\mathbb{T} \times \mathbb{R}_+,\\
u(x, 0)=u_{0}(x), \quad  \tilde{v}(x, 0)= \tilde{v}_{0}(x), & x \in \mathbb{T}.
\end{array}\right.
\end{equation}
In this case, we can define the energy of the system as 
$$E(t):=\|u(t)\|_{L^{2}(\mathbb{T})}^{2} + \| \tilde{v}(t)\|_{L_{0}^{2}(\mathbb{T})}^{2},$$
and 
$L^{2}_{0}(\mathbb{T}) = \{w \in L^{2}(\mathbb{T}); [w] = 0\}$.  So, multiplying the first equation of  \eqref{prob-diss} by  $u$,  the second one by $\tilde{v}$ and integrating by parts we can obtain\footnote{Actually,  by \eqref{Def-G} and Fubini's theorem, $G$ is self-adjoint (i.e.  $G^*= G$),  so we shall keep the notation for the feedback $h=-G^*\tilde{v}= - G\tilde{v}$ throughout.}
\begin{equation}\label{derivative}
\frac{d}{dt}E(t) = - 2\left(\|a(\cdot)u(t)\|_{L^{2}(\mathbb{T})}^{2}  + \|G\tilde{v}(t)\|_{L^{2}_{0}(\mathbb{T})}^{2}\right) \leq 0.
\end{equation}
This indicates that the controls play the role of two damping mechanisms. So, our result establishes that the system \eqref{prob-diss} is asymptotically exponential stable. Precisely, we provide a positive answer to the global stabilization question already mentioned at the beginning of the introduction.

\begin{theorem}\label{est-teo_int}
Assume that $a \in L^{\infty}(\mathbb{T})$ satisfy the conditions \eqref{Def-a} and $Gh$ given by \eqref{Def-G}. Then, for every $R_{0} > 0$, there exist $C:=C(R_0)>0$ and $\gamma > 0$ such that the following inequality holds
\begin{eqnarray}
\|(u,  \tilde{v})(t)\|_{L^{2}(\mathbb{T}) \times L^{2}_{0}(\mathbb{T})} \leq Ce^{-\gamma t}\|(u_{0}, \tilde{v}_{0})\|_{L^{2}(\mathbb{T}) \times L^{2}_{0}(\mathbb{T})}, \quad \forall t \geq 0,
\end{eqnarray}
for every solution $(u, \tilde{v})$ of the system \eqref{prob-diss} with initial data $(u_{0},  \tilde{v}_{0}) \in L^{2}(\mathbb{T}) \times L^{2}_{0}(\mathbb{T})$ satisfying 
$
\|(u_{0}, \tilde{v}_{0})\|_{L^{2}(\mathbb{T}) \times L^{2}_{0}(\mathbb{T})} \leq R_{0}.
$
\end{theorem}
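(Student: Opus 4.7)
The plan is to follow the classical compactness--uniqueness reduction introduced by Dehman--G\'erard--Lebeau \cite{dehman-gerard-lebeau} and adapted to the KdV setting by Laurent et al.\ \cite{Laurent}. First, I would integrate the dissipation identity \eqref{derivative} over $[0,T]$ to obtain
\begin{equation*}
E(0) - E(T) = 2\int_0^T \left(\|a(\cdot) u(t)\|_{L^2(\mathbb{T})}^2 + \|G\tilde v(t)\|_{L^2_0(\mathbb{T})}^2\right)\,dt.
\end{equation*}
Hence, by the standard semigroup iteration on consecutive intervals of length $T$ (together with the fact that $E$ is nonincreasing, so the bound $R_0$ persists in time), it suffices to prove the observability inequality
\begin{equation*}
E(0) \le C(R_0)\int_0^T \left(\|a(\cdot) u(t)\|_{L^2(\mathbb{T})}^2 + \|G\tilde v(t)\|_{L^2_0(\mathbb{T})}^2\right)\,dt
\end{equation*}
for some fixed $T>0$ and for every solution of \eqref{prob-diss} with $E(0)\le R_0^2$. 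The exponential decay rate $\gamma=\gamma(R_0)>0$ then follows by applying $E(T)\le (1-1/C)E(0)$ iteratively.

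Next, I would argue the observability by contradiction: assume the existence of solutions $(u_n,\tilde v_n)$ of \eqref{prob-diss} with $\alpha_n^2 := E_n(0) \le R_0^2$ such that
\begin{equation*}
\int_0^T \left(\|a u_n\|_{L^2}^2 + \|G\tilde v_n\|_{L^2_0}^2\right)\,dt = o(\alpha_n^2), \qquad n\to\infty.
\end{equation*}
Up to a subsequence, $\alpha_n\to\alpha\in[0,R_0]$. The well-posedness analysis in Bourgain spaces performed earlier in the paper provides a uniform bound on $(u_n,\tilde v_n)$ in the relevant $X^{0,b}_{\mathrm{NLS}}\times X^{0,b}_{\mathrm{KdV}}$ spaces on $[0,T]$. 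The propagation of compactness in Bourgain spaces, which is the main technical contribution of the paper, then allows (modulo extraction) to upgrade weak convergence to strong $L^\infty(0,T;L^2(\mathbb{T}))$ convergence of $(u_n,\tilde v_n)$ to some limit $(u,\tilde v)$.

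The contradiction will be extracted in two cases. If $\alpha>0$, passing to the limit in \eqref{prob-diss} (using multilinear Bourgain estimates to handle the cubic and quadratic nonlinearities) shows that $(u,\tilde v)$ solves the nonlinear NLS--KdV system on $\mathbb{T}\times(0,T)$ with vanishing feedback: from \eqref{Def-a} one gets $u\equiv 0$ on $\omega\times(0,T)$, and from \eqref{Def-G} combined with the zero-mean condition $\tilde v$ is spatially constant on $\omega\times(0,T)$; a unique continuation property for the coupled system then forces $(u,\tilde v)\equiv 0$ on $\mathbb{T}\times(0,T)$, contradicting $E(0)=\alpha^2>0$. If $\alpha=0$, I would rescale by $(w_n,z_n) := \alpha_n^{-1}(u_n,\tilde v_n)$: the cubic and quadratic nonlinearities of \eqref{prob-diss} acquire extra factors of $\alpha_n^2$ and $\alpha_n$, so they vanish in the limit, leaving $(w,z)$ as a nontrivial solution of the linear damped NLS--KdV with vanishing observations, on which the linear version of the unique continuation principle again yields $(w,z)\equiv 0$, the desired contradiction.

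The hardest step, and the one that does the actual microlocal work, is precisely the black box I used above: propagation of compactness and of regularity in Bourgain spaces for a coupled system whose principal symbols have different orders (of degree two for the Schr\"odinger part and of degree three for the KdV part). The analysis has to be carried out along two distinct characteristic flows, and the coupling terms $i\partial_x \tilde v$ and $\operatorname{Re}(\partial_x u)$ must be absorbed using the bilinear Bourgain estimates which gain derivatives; a companion unique continuation property for the coupled system, both in its nonlinear and linear versions, then closes the argument. Once these propagation results and the unique continuation are at hand, the compactness--uniqueness scheme above goes through in the classical manner.
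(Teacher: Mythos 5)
Your proposal is correct and follows essentially the same route as the paper: reduction of the exponential decay to an observability inequality via the dissipation identity and semigroup iteration, then a contradiction argument split into the cases $\alpha>0$ and $\alpha=0$, using propagation of compactness in Bourgain spaces, passage to the limit in the nonlinear terms, and the unique continuation property (with the rescaling trick killing the nonlinearities in the $\alpha=0$ case). The ingredients you treat as black boxes (propagation of compactness/regularity for the coupled system with principal symbols of different orders, and the Carleman-based unique continuation) are exactly what the paper establishes in its appendices.
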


Once we have established the global stabilization result, the answer for the global exact controllability problem will be a consequence of the following local exact controllability theorem.

 \begin{theorem}\label{exc_local}
 Let $\omega$ be any nonempty set of $\mathbb{T}$. Then 
 there exist $\delta > 0$ and $T > 0$ such that for every 
  $(u_{0},\tilde{v}_{0}) \in L^{2}(\mathbb{T}) \times L^{2}_{0}(\mathbb{T})$ with 
$  \|(u_{0},  \tilde{v}_{0})\|_{L^{2}(\mathbb{T}) \times L^{2}_{0}(\mathbb{T})}
  < \delta$,
 we can find $f \in C([0, T]; L^{2}(\mathbb{T}))$ and $h\in C([0, T]; L^{2}(\mathbb{T}))$, $f$ and $h$ compactly supported  in $]0, T[ \times \omega$, such that, 
  the unique solution  $(u,  \tilde{v}) \in C([0, T]; L^{2}(\mathbb{T}) \times L^{2}(\mathbb{T}))$ %{\red $(u, v) \in X^{0, \frac{1}{2}}_{T} \times Y^{0, \frac{1}{2}}_{T}$} 
  of the system \eqref{prob-diss} satisfies $(u, \tilde{v})(x,T) = (0, 0)$.
 \end{theorem}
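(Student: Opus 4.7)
The plan is to follow the HUM-plus-fixed-point scheme that has become standard for dispersive equations on the torus (Laurent--Rosier--Zhang for KdV, Dehman--Gérard--Lebeau and Laurent for NLS), adapted here to the coupled pair. First I would prove exact controllability of the linearization of \eqref{prob-diss} at $(0,0)$, namely
\begin{equation*}
\begin{cases}
i\partial_{t}U+\partial_{x}^{2}U-i\partial_{x}V=f,\\
\partial_{t}V+\partial_{x}^{3}V+(\mu+\tilde\mu)\partial_{x}V-\operatorname{Re}(\partial_{x}U)=Gh,
\end{cases}
\end{equation*}
with $f$ supported in $(0,T)\times\omega$ and $Gh$ supported in $\omega$. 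Since the nonlinear terms $\beta|u|^{2}u$ and $\tfrac12\partial_{x}(\tilde v^{2})$ are superquadratic at the origin in the Bourgain norms controlled by the bilinear/trilinear estimates underlying the well-posedness part of the paper, a fixed point argument should then upgrade the linear controllability into the local nonlinear result for data of size $\delta$ small enough.

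For the linear step, HUM duality reduces exact controllability to the observability inequality
\begin{equation*}
\|(U(0),V(0))\|_{L^{2}\times L^{2}_{0}}^{2}\leq C\int_{0}^{T}\bigl(\|\mathbf{1}_{\omega}U\|_{L^{2}}^{2}+\|GV\|_{L^{2}_{0}}^{2}\bigr)dt
\end{equation*}
for the adjoint coupled linear system. I would establish this by the compactness--uniqueness method: (i) obtain a weak observability modulo a compact lower-order term using energy estimates in the Bourgain spaces; (ii) remove the remainder by a contradiction argument in which a renormalized sequence of solutions converges strongly thanks to the propagation of compactness and propagation of regularity results developed earlier in the paper; and (iii) kill the limit via a unique continuation property for the adjoint system stating that if $U\equiv 0$ on $\omega\times(0,T)$ and $GV\equiv 0$ then $(U,V)\equiv 0$. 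The propagation-of-regularity results first upgrade the smoothness of the weak limit, after which a Holmgren/Carleman-type argument for the coupled system, exploiting the first-order coupling terms, closes the UCP.

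For the nonlinear closure, let $\Psi\colon(u_{0},\tilde v_{0})\mapsto(f,h)$ denote the bounded HUM operator for the linear system. Given $(u,\tilde v)$ in a small ball $B_{\rho}$ of a Bourgain product space $X^{s,b}_{S}\times X^{s,b}_{K}$, I treat the nonlinear terms as source terms, choose the controls as $\Psi$ applied to the initial data corrected by the effect of these sources at time $T$, and define $\mathcal{F}(u,\tilde v)$ to be the resulting solution of \eqref{prob-diss}. The bilinear/trilinear Bourgain estimates available from the global well-posedness theory, combined with the boundedness of $\Psi$, then show that $\mathcal{F}$ maps $B_{C\delta}$ into itself and is a contraction for $\delta$ sufficiently small; its fixed point yields the desired controlled trajectory, with time reversibility of the system providing the reduction from controllability to zero to general local controllability near $0$. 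The main obstacle is step (iii) of the linear argument: because the NLS and KdV operators have principal symbols of different orders ($2$ and $3$), no single Carleman weight fits both equations simultaneously, and the UCP for the coupled system must be obtained by careful use of the coupling terms $i\partial_{x}V$ and $\operatorname{Re}(\partial_{x}U)$, together with the mean-zero constraint on $V$ that dictates the use of the projector $G$.
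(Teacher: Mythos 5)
Your proposal is essentially correct, but it takes a genuinely different route from the paper on the linear step. You keep the coupling terms $i\partial_{x}V$ and $\operatorname{Re}(\partial_{x}U)$ inside the linearized system and propose to prove observability for that \emph{coupled} linear system by compactness--uniqueness, propagation of compactness/regularity, and a unique continuation property; only the superquadratic terms $\beta|u|^{2}u$ and $\tfrac12\partial_{x}(\tilde v^{2})$ are then absorbed by the fixed point, so smallness of $\delta$ alone closes the argument. The paper instead linearizes around the fully \emph{decoupled} pair (free Schr\"odinger and free Airy), for which the observability inequalities \eqref{control-LS} and \eqref{control-KdV} are classical single-equation results that can simply be added; the coupling terms are pushed into the perturbation $(u_{NL},v_{NL})$ together with the nonlinearities, and the fixed point is run on the dual datum $(\phi_{0},\psi_{0})$ via $B=S_{L}^{-1}(u_{0},v_{0})-S_{L}^{-1}K$. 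Since $i\partial_{x}v$ and $\operatorname{Re}(\partial_{x}u)$ are linear rather than superquadratic, the paper's contraction does not come from smallness of the data but from the factor $T^{0+}$ supplied by Propositions \ref{lema-bour} and \ref{Prop_Bou}, which is why the statement produces a specific small $T$ rather than working for all $T$. The trade-off is clear: your route requires the heavy machinery of Appendices \ref{Appendix1} and \ref{Appendix2} (propagation plus a coupled Carleman-based UCP, where the paper combines the two weights via $\tilde\vf=\min\vf$, $\hat\vf=\max\vf$ rather than seeking a single common weight) already at the level of the \emph{local} result, but in exchange it would give local null controllability in any time $T$ for which the coupled linear observability holds; the paper's route needs only off-the-shelf single-equation observability and defers all microlocal work to the global stabilization theorem, at the price of a small control time. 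Both are sound; just be aware that in your scheme the boundedness of the HUM operator $\Psi$ for the coupled linearization is not free and is exactly where the hard analysis sits.
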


Finally with the previous local exact controllability result in hand,  combining it with the global stabilization result we get the global controllability result, which can be read as follows.

\begin{theorem}\label{controle}
Let $\omega \subset \mathbb{T}$ be a nonempty open set and $R_{0} > 0$. Then, there exist $T:=T(R_{0})>0$ such that, for every 
$(u_{0}, v_{0}), (u_{1}, v_{1}) \in L^{2}(\mathbb{T}) \times L^{2}(\mathbb{T})$ with  
\begin{eqnarray*}
\|(u_{0}, v_{0})\|_{L^{2}(\mathbb{T}) \times L^{2}(\mathbb{T})} \leq R_{0} \quad \hbox{ and } \quad \|(u_{1}, v_{1})\|_{L^{2}(\mathbb{T}) \times L^{2}(\mathbb{T})} \leq R_{0},
\end{eqnarray*}
and $[v_{0}] = [v_{1}]$ one can find control inputs $f\in C([0, T]; L^{2}(\mathbb{T}))$ and $h \in C([0, T]; L^{2}_{0}(\mathbb{T}))$, with $f$ and $h$ supported in $\omega \times (0, T)$,  such that the unique solution $(u, v) \in C([0, T]; L^{2}(\mathbb{T}) \times L^{2}(\mathbb{T}))$ of the system  \eqref{prob-intro-contr} satisfies $(u,v)(x,T) = (u_{1}(x), v_{1}(x))$.
\end{theorem}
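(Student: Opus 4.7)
The plan is to combine the global stabilization result (Theorem \ref{est-teo_int}) with the local null-controllability result (Theorem \ref{exc_local}) through the classical three-piece concatenation: stabilize the initial state close to zero, drive it to zero by local controllability, and then reach the target by the time-reversed analog of the same procedure. Preliminarily, since $g = Gh$ given by \eqref{Def-G} has zero spatial mean and the second equation of \eqref{prob-intro-contr} preserves $[v]$ whenever its source does, the hypothesis $[v_0] = [v_1]$ lets us subtract the common mean $\tilde\mu$ from $v$ and reformulate the task as producing a controlled trajectory of the shifted system for $(u,\tilde v) \in L^2(\mathbb T)\times L^2_0(\mathbb T)$ connecting two states in the $R_0$-ball. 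After the construction, adding $\tilde\mu$ back to the $\tilde v$-component yields the desired solution of \eqref{prob-intro-contr}.

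On $[0, T_1]$ I would close the loop with the damping feedbacks $f = -i a(x)^2 u$ and $h = -G\tilde v$ and invoke Theorem \ref{est-teo_int}: since the rate $\gamma$ is uniform on the ball of radius $R_0$, the decay $\|(u,\tilde v)(t)\|_{L^2\times L^2_0} \le C e^{-\gamma t}R_0$ allows me to choose $T_1 = T_1(R_0)$ so that the norm at time $T_1$ is smaller than the $\delta$ of Theorem \ref{exc_local}. On $[T_1, T_1+T_2]$ I would then apply Theorem \ref{exc_local} to steer the small state to $(0,0)$ with controls $f,h$ compactly supported in $\,]T_1, T_1+T_2[\, \times \omega$. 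For the second half of the trajectory, on $[T_1+T_2, 2T_1+2T_2]$, I would use the time reversibility of the system: a direct calculation shows that the map $(u,v)(x,t) \mapsto (\overline{u(-x, T-t)}, v(-x, T-t))$ sends solutions of \eqref{prob-intro-contr} (with reversed and conjugated source terms) to solutions of a system of the same form. Hence Theorems \ref{est-teo_int}–\ref{exc_local} apply to the reversed system, and stabilize-then-null-control the state $(u_1, \tilde v_1)$ in backward time; transporting the resulting controls back gives a trajectory from $(0,0)$ to $(u_1, \tilde v_1)$ in time $T_1+T_2$. Concatenation yields the overall control time $T = 2(T_1 + T_2) = T(R_0)$.

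The principal technical point is ensuring a clean junction between the four pieces: continuity in $t$ of the state and the support conditions on $(f,h)$. The compact time-support built into Theorem \ref{exc_local} guarantees that the free controls vanish near the interior junction times $T_1, T_1+T_2, 2T_1+T_2$, so the patched $f$ and $h$ remain in $C([0,T]; L^2(\mathbb T))$ and supported in $\omega \times (0, T)$. Continuity of the state across all junctions, and uniqueness of the concatenated solution in $C([0, T]; L^2 \times L^2)$, follow from the well-posedness already established in the Bourgain framework. Finally, the feedback $h = -G\tilde v$ in the stabilization phases takes values in $L^2_0(\mathbb T)$ by the structure of \eqref{Def-G}, as do the free controls from Theorem \ref{exc_local}, so the total $h$ lies in $C([0,T]; L^2_0(\mathbb T))$; after restoring the mean $\tilde\mu$ the pair $(u,v)$ is a solution of \eqref{prob-intro-contr} satisfying $(u,v)(x,T) = (u_1(x), v_1(x))$, as required.
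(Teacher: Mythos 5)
Your proposal is correct and is essentially the argument the paper intends: the paper does not write out a proof of Theorem \ref{controle} but explicitly states that it follows by combining the global stabilization (Theorem \ref{est-teo_int}) with the local controllability (Theorem \ref{exc_local}) via the standard stabilize--steer-to-zero--reverse-time concatenation, after reducing to the zero-mean variable $\tilde v=v-\tilde\mu$ exactly as you do. The only detail worth flagging is that your time-reversal map involves the spatial reflection $x\mapsto -x$, so the reversed controls are supported in $-\omega$ rather than $\omega$; this is harmless because Theorems \ref{est-teo_int} and \ref{exc_local} hold for an arbitrary nonempty open set, but it deserves a sentence.
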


\subsection{Heuristic and outline of the manuscript} In this work global control results are proved by combining estimates derived from Bourgain spaces and using microlocal analysis. To our knowledge, this is the first time that this method is used in a coupled dispersive system with two equations defined by differential operators with principal symbols of different orders and more importantly, a key feature in this contribution is that we can prove global control results for the nonlinear system in a bounded domain. This represents an improvement concerning the previous paper \cite{Arauna} where the control problem, for the linear system, is considered. The key ingredients of this work are:
\begin{itemize}
\item[$\bullet$]\textit{Strichartz, bilinear and multilinear estimates} associated to the solution of the problem under consideration;
\item[$\bullet$]\textit{Microlocal analysis} to prove \textit{propagation of the regularity and compactness} for two equations defined by differential operators with principal symbols of different orders;
\item[$\bullet$]\textit{Unique continuation property} which is a consequence of the Carleman estimates for each equation, KdV and Schrödinger equations. 
\end{itemize}

The proof of the Theorem \ref{est-teo_int} is equivalent to prove an \textit{observability inequality}, which one, by using \textit{contradiction arguments}, relies on to prove a \textit{unique continuation property} for the system \eqref{prob-diss}. This property is achieved thanks to the \textit{propagation results} using the smooth properties of the Bourgain spaces. The main difficulty to prove the propagation results arises from the fact that the system \eqref{prob-diss} is defined by two differential operators with principal symbols of different orders. To overcome this difficulty we employ the estimates, proved in Section \ref{Sec2}, for the solution of our problem in Bourgain spaces. 

The strategy to prove Theorem \ref{exc_local}  is to consider the control operator for the nonlinear problem as a perturbation of the control operator for the linear system associated with \eqref{prob-diss}, the perturbation argument is due to Zuazua \cite{Zuazua}. Additionally, the control result for large data (Theorem \ref{controle}) will be a combination of a global stabilization result (Theorem \ref{est-teo_int}) and the local control result (Theorem \ref{exc_local}), as it is usual in control theory.

It is important to point out that the dissipation laws in \eqref{prob-diss} are intrinsically linked with the physical problems modeled by the Schrödinger and the KdV equations. In the case when it is considered only one equation, these same dissipation laws were already considered in \cite{Laurent-esaim,Rosier-Zhang}, for the NLS equation, and in \cite{Laurent,Russel96}, for the KdV equation.  

Lastly, since we are working with a coupled system with the structure of the nonlinear Schrödinger equation and nonlinear Korteweg-de Vries equation it is natural to adapt the approach introduced by \cite{Laurent-esaim, Laurent}, however, a direct application of these results is not enough to get the main results of our article.  The first novelty of this manuscript is to deal with the coupled terms, here it is necessary estimates these terms in the Bourgain spaces (see Lemma \ref{LemaBou} and Proposition \ref{Prop_Bou}) to be able to perform with the fixed point argument, which is necessary to prove the well-posedness and the controllability results for the nonlinear system.  A second and important point is to treat the nonlinear case, instead of the linear one (see \cite{Arauna}), to deal with the nonlinear problem it is necessary to prove propagation of compactness and regularity results, and also, a unique continuation property for the solution of the system \eqref{prob-intro-contr}. This was achieved thanks to the estimates in the Bourgain spaces,  which are paramount and the key to dealing with the nonlinear terms.

Our manuscript is outlined as follows. Section \ref{Sec2} is to establish estimates needed in our analysis, \textit{Strichartz, bilinear and multilinear estimates}. With these estimates, we can prove the existence of solutions for the nonlinear NLS--KdV system with source and damping terms in Section \ref{Sec3}. Next, Sections \ref{Sec4} and \ref{Sec5}, are aimed to present the proof of the stabilization and controllability theorems, respectively. We also present in  Section \ref{Sec6} concluding remarks and open problems. Finally,  Appendixes \ref{Appendix1} and \ref{Appendix2} are devoted to proving the \textit{propagation results} and the \textit{unique continuation property}, respectively.

\section{Fourier transform restriction} \label{Sec2}
In this section we present definitions, some properties of the Bourgain spaces associated with the system \eqref{prob-intro}, and estimates which will be essential to prove our main results. It is well known that J. Bourgain \cite{Bourgain} discovered a subtle smoothing property of solutions of the Schrödinger and KdV equations in $\mathbb{R}$ and $\mathbb{T}$.

 \subsection{Definitions and notations}
Let us now define the Bourgain spaces $X^{k, b}$ and $Y^{s, b}$ associated with the linear operators of the Schr\"odinger and KdV equations, respectively.  For given $k,b,s\in\mathbb{R}$, functions $u:\mathbb{T\times
R\to C}$ and $v:\mathbb{T\times
R\to R}$, in $\mathcal{S}(\mathbb T\times\R)$, and $\mu \in \mathbb{R}$ define the quantities
$$
\|u\|_{X^{k, b}} = \left(\sum_{n \in \mathbb{Z}} \int_{-\infty}^{+\infty}\langle n\rangle^{2 k}\left\langle\tau+n^{2}\right\rangle^{2 b}|\hat{f}(n, \tau)|^{2} d \tau\right)^{1 / 2}$$
and
$$
\|v\|_{Y^{s, b}}=\left(\sum_{n \in \mathbb{Z}} \int_{-\infty}^{+\infty}\langle n\rangle^{2 s}\left\langle\tau-n^{3} + \mu n\right\rangle^{2 b}|\hat{g}(n, \tau)|^{2} d \tau\right)^{1 / 2},
$$
where $\hat{f}$ is the Fourier transform of $f$ with respect to the 
variables $x$ and $t$, with $\left\langle \cdot\right\rangle =\sqrt{1+\left\vert \text{ }\cdot\text{ }\right\vert ^{2}}$.  %We will now on to denote by $D^{r}$ the operator defined on $\mathcal{D}^{\prime}(\mathbb{T})$ by
%\begin{equation}\label{DR}
%\widehat{D^{r} u}(k)=\left\{\begin{array}{ll}
%|k|^{r} \hat{u}(k) & \text { if } k \neq 0, \\
%\hat{u}(0) & \text { if } k=0.
%\end{array}\right.
%\end{equation}
The Bourgain spaces $X^{k,b}$ and $Y^{s,b}$  associated with Schrödinger and KdV operators are the completion of the Schwartz space $\mathcal{S}\left(  \mathbb{T\times R}\right)  $ under the norm $\left\Vert u\right\Vert _{X^{k,b}}$ and $\left\Vert
v\right\Vert _{Y^{s,b}}$, respectively.

Considering the time interval $I \subset \mathbb{R}$, we 
define $X^{k,b}(I)$ and $Y^{s,b}(I)$ as the restriction spaces of $X^{k,b}$ and $Y^{s,b}$, respectively, to the interval $I$ with the norm
\begin{equation}\label{rest-spaces}
\|f\|_{X^{k, b}(I)}=\inf _{\left.\tilde{f}\right|_{I}=f}\|\tilde{f}\|_{X^{k, b}} \quad \text { and } \quad \|g\|_{Y^{s, b}(I)}=\inf _{\left.\tilde{g}\right|_{I}=g}\|\tilde{g}\|_{Y^{s, b}}.
\end{equation}
We shall also consider the smaller spaces $\widetilde{X}^{k}$ and $\widetilde{Y}^{k}$ defined by the norms 
\begin{equation}\label{X_k}
\|u\|_{\tilde{X}^{k}}:=\|u\|_{X^{k, 1 / 2}}+\left\|\langle n\rangle^{k} \hat{u}(n, \tau)\right\|_{l_{n}^{2} L_{\tau}^{1}} \text { and }\|v\|_{\tilde{Y}^{s}}:=\|v\|_{Y^{s, 1 / 2}}+\left\|\langle n\rangle^{s} \hat{v}(n, \tau)\right\|_{l_{n}^{2} L_{\tau}^{1}}
\end{equation}
and the restriction spaces $\widetilde{X}^{k}(I)$ and 
$\widetilde{Y}^{k}(I)$ as in \eqref{rest-spaces}.
The companion spaces will be defined as  $Z^{k}$ and $W^{s}$ via the norms 
$$
\|u\|_{Z^{k}} = \|u\|_{X^{k,-1 / 2}}+\left\|\frac{\langle n\rangle^{k} \hat{u}(n, \tau)}{\left\langle\tau+n^{2}\right\rangle}\right\|_{ L_{\tau}^{1}l_{n}^{2}}
$$
and
$$\|v\|_{W^{s}} = \|v\|_{Y^{s,-1 / 2}} + \left\|\frac{\langle n\rangle^{s} \hat{v}(n, \tau)}{\left\langle\tau-n^{3} + \mu n\right\rangle}\right\|_{L_{\tau}^{1}l_{n}^{2}},$$
respectively. 

\subsubsection{Notations} If $I = (0, T)$, we denote $X^{k, b}(I)$ (resp. $Y^{s,b}(I)$, $\widetilde{X}^{k,b}(I)$ and $\widetilde{Y}^{s,b}(I)$) by $X^{k,b}_{T}$ (resp. $Y^{s,b}_{T}$, $\widetilde{X}^{k,b}_{T}$ and $\widetilde{Y}^{s,b}_{T}$). Denote from now on $\psi \in C_{0}^{\infty}(\mathbb{R})$ a  non-negative function supported in $[-2, 2]$  with $\psi \equiv 1$ on $[-1, 1]$ and  $\psi_{\delta}(t) := \psi\left({t}/{\delta}\right)$, for any $\delta > 0$. We also denote $$H_{0}^{s}(\mathbb{T})=\left\{u \in H^{s}(\mathbb{T}) ;[u]=0\right\} \quad \text{ and } \quad L_{0}^{2}(\mathbb{T})=\left\{u \in L^{2}(\mathbb{T}) ;[u]=0\right\}.$$
 Finally,  for any $a \in \mathbb{R}$, $a+$ (resp. $a-$) is a number slightly larger (resp. smaller) than $a$, more precisely, $a+ = a + \epsilon$, for some $\epsilon > 0$ as small as we want. We shall also  denote by $D^{r}$ the operator defined on $\mathcal{D}^{\prime}(\mathbb{T})$ by
\begin{equation}\label{DR}
\widehat{D^{r} u}(k)=\left\{\begin{array}{ll}
|k|^{r} \hat{u}(k) & \text { if } k \neq 0, \\
\hat{u}(0) & \text { if } k=0.
\end{array}\right.
\end{equation}
 
 \vspace{0.2cm}
 
 The following properties of $X^{k,b}_{T}$ and  $Y^{s,b}_{T}$, $k,s,b \in \mathbb{R}$, can be verified.
 \begin{itemize}
     \item[$(i)$] $X^{k, b}_{T}$ and $Y^{s,b}_{T}$ are Hilbert spaces.
     \vspace{0.1cm}
     \item[$(ii)$] Let $\delta, \delta_{1} \geq 0$, then $X^{k + \delta, b + \delta_{1}}_{T}$ is 
  continuously embedded in $X^{k, b}_{T}$ and if $\delta > 0$ and $\delta_{1} > 0$ we have a compact embedding. Similar result holds for $Y^{s,b}_{T}$ spaces.
       \vspace{0.1cm}
  \item[$(iii)$] The spaces $\widetilde{X}^{k}_{T}$ and $\widetilde{Y}^{s}_{T}$ are continuously embedded in $C([0, T]; H^{k}(\mathbb{T}))$ and $C([0, T]; H^{s}(\mathbb{T}))$, respectively. 
 \end{itemize}
 
\subsection{Overview of the estimates} This part of the work is dedicated to present estimates for the linear group associated with Schrödinger and KdV equations. Additionally,  we will present the trilinear and bilinear estimates associated with the nonlinearities involved in our problem, which are the key to proving the global control results in this manuscript. These results will be borrowed from \cite{Arbieto,Laurent,Laurent-esaim}.

Let $U(t) =e^{it\partial_{x}^{2}}$ and $V(t)=e^{-t(\partial_{x}^{3} + \mu \partial_{x})}$ be the unitary groups
associated with the linear Schrödinger and the Airy equations, respectively.  So, the following linear estimates are verified and the proofs were made in \cite[Lemma 4.1.]{Arbieto}, \cite[Lemma 3.4.]{Laurent} and \cite[Lemma 1.2.]{Laurent-esaim}, respectively, so we will omit them. 
 \begin{proposition} \label{Est-Lin} 
  Let $u_{0} \in H^{k}(\mathbb{T})$ and $v_{0} \in H^{s}(\mathbb{T})$, then 
  \begin{itemize}
       \item[$(i)$] $\|\psi_{1}(t)e^{it\partial_{x}^{2}}u_{0}\|_{X^{k,b}} \leq C\|u_{0}\|_{H^{k}}$\quad and $$\left\|\psi_{T}(t)\int_{0}^{t}{e^{i(t-t')\partial_{x}^{2}}F(t')dt'}\right\|_{X^{k, \frac{1}{2}}} \leq C \|F\|_{Z^{k}}.$$ 
       
       \item[$(ii)$] $\|\psi_{1}(t)e^{ -t(\partial_{x}^{3} + \mu \partial_{x})}v_{0}\|_{Y^{s,b}} \leq C\|v_{0}\|_{H^{s}}$ \quad and $$\left\|\psi_{T}(t)\int_{0}^{t}{e^{(t-t')(\partial_{x}^{3} + \mu \partial_{x})}G(t')dt'}\right\|_{Y^{s, \frac{1}{2}}} \leq C \|G\|_{W^{s}}.$$
       
       \item[$(iii)$] Let $-1 \leq b \leq 1$, $s \in \mathbb{R}$ and $\varphi \in C^{\infty}(\mathbb{T})$. Then, for every $u \in Y^{s,b}$, $\varphi(x)u \in Y^{s -|b|, b}$. Similarly, the multiplication $\varphi$ maps $Y^{s,b}_{T}$ into $Y^{s - |b|, b}_{T}$.
       
              \item[$(iv)$] Let $-1 \leq b \leq 1$, $k \in \mathbb{R}$ and $\varphi \in C^{\infty}(\mathbb{T})$. Then, for every $u \in X^{k,b}$, $\varphi(x)u \in X^{k -|b|, b}$. Similarly, the multiplication $\varphi$ maps $X^{k,b}_{T}$ into $X^{k - |b|, b}_{T}$.
  \end{itemize}
 \end{proposition}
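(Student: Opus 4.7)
\textbf{Plan of proof for Proposition \ref{Est-Lin}.} Since the four estimates are purely linear and decouple into one Schrödinger pair (i), one KdV pair (ii) and two multiplication statements (iii)--(iv), my plan is to treat them in parallel, exploiting the fact that in each case the dispersive weight $\langle\tau+n^2\rangle$ (resp.\ $\langle\tau-n^3+\mu n\rangle$) is simply shifted by the symbol of the free group so that multiplication by $U(t)$ (resp.\ $V(t)$) is an isometry on the Bourgain norms. The only non-routine step will be the Duhamel bounds, where one must be careful with the weight $\langle\tau+n^2\rangle^{-1}$ that is borrowed from $Z^k$.

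For the homogeneous bound in (i), first I compute $\widehat{\psi_{1}(t)U(t)u_{0}}(n,\tau)=\widehat{\psi}_{1}(\tau+n^{2})\,\hat{u}_{0}(n)$; plugging into the definition of $\|\cdot\|_{X^{k,b}}$, I get a factorized expression $\sum_{n}\langle n\rangle^{2k}|\hat{u}_{0}(n)|^{2}\int\langle\tau+n^{2}\rangle^{2b}|\widehat{\psi}_{1}(\tau+n^{2})|^{2}\,d\tau$, and since $\widehat{\psi}_{1}$ is Schwartz the $\tau$-integral is a finite constant, which gives the claim. For the Duhamel bound I write $F(x,t)=\sum_{n}\int e^{i(nx+\tau t)}\hat{F}(n,\tau)\,d\tau$, use the identity
\[
\int_{0}^{t}e^{i(t-t')\partial_{x}^{2}}F(t')\,dt'
=\sum_{n}e^{inx}\int\frac{e^{it\tau}-e^{-itn^{2}}}{i(\tau+n^{2})}\hat{F}(n,\tau)\,d\tau
\]
and split the integrand by a cut-off $\chi(\tau+n^{2})$ separating $|\tau+n^{2}|\le 1$ from its complement. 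On the region $|\tau+n^{2}|\ge1$ a direct computation gives an $X^{k,1/2}$ bound by $\|F\|_{X^{k,-1/2}}$; on the region $|\tau+n^{2}|\le1$ the Taylor expansion of $e^{it\tau}-e^{-itn^{2}}$ in $\tau+n^{2}$ produces a term of the form $\psi_{T}(t)U(t)\phi$ with $\hat\phi(n)=\int\frac{\hat F(n,\tau)}{\langle\tau+n^2\rangle}d\tau$, whose $X^{k,1/2}$-norm is controlled by the second piece of $\|F\|_{Z^{k}}$ via part (i) just established. Assertion (ii) is proved by the same template, replacing $-n^{2}$ by $n^{3}-\mu n$; the lower-order drift $\mu\partial_{x}$ only relocates the resonance curve and does not affect the argument.

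For (iii) and (iv) I would interpolate on $b$: at $b=0$ the statement reduces to $\|\varphi u\|_{L^{2}_{t}H^{s}_{x}}\le\|\varphi\|_{C^{N}}\|u\|_{L^{2}_{t}H^{s}_{x}}$, which is a standard pseudo-differential estimate since $\varphi\in C^{\infty}$ acts on Sobolev spaces in $x$; at $b=1$ (resp.\ $b=-1$) one differentiates the defining relation $(\partial_{t}+\partial_{x}^{3}+\mu\partial_{x})(\varphi u)=\varphi(\partial_{t}+\partial_{x}^{3}+\mu\partial_{x})u+[\partial_{x}^{3},\varphi]u$ and observes that the commutator $[\partial_{x}^{3},\varphi]$ is at most second-order in $x$, producing at worst a loss of one spatial derivative; combined with the $b=0$ bound this yields the $H^{s-1}_{x}$ control needed for $b=\pm1$. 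Real interpolation (or the analytic families of operators method) in $b$ then covers $b\in[-1,1]$ with the loss $\langle n\rangle^{|b|}$, giving the $Y^{s-|b|,b}$ bound. The same argument with the Schrödinger operator $i\partial_{t}+\partial_{x}^{2}$ gives (iv). The restriction-space version (where $u\in Y^{s,b}_{T}$ or $X^{k,b}_{T}$) follows from the free-space inequality by taking infima over extensions and noting that $\varphi(x)$ commutes with the restriction to $I=(0,T)$.

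\textbf{Main obstacle.} The technically delicate point is the treatment of the low-frequency region $|\tau+n^{2}|\le 1$ in the Duhamel estimate of (i) (and its KdV analogue in (ii)): without the extra $L^{1}_{\tau}l^{2}_{n}$-norm in the definition of $Z^{k}$ and $W^{s}$, the naïve $X^{k,-1/2}\to X^{k,1/2}$ gain of a full derivative $\langle\tau+n^{2}\rangle^{-1}$ fails logarithmically, so one must genuinely use the second summand of the $Z^{k}$-norm. The rest of the proposition is a largely bookkeeping exercise once this point is settled.
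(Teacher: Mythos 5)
The paper itself contains no proof of this proposition: it is imported verbatim from the literature (Arbieto--Corcho--Matheus, Lemma 4.1; Laurent--Rosier--Zhang, Lemma 3.4; Laurent, ESAIM, Lemma 1.2), so your attempt has to be measured against those references. Your treatment of $(i)$, $(ii)$ and $(iv)$ follows their standard route and is essentially sound: the factorization $\widehat{\psi_{1}U(\cdot)u_{0}}(n,\tau)=\widehat{\psi_{1}}(\tau+n^{2})\hat{u}_{0}(n)$ for the homogeneous bound; the splitting of the Duhamel term according to $|\tau+n^{2}|\le 1$ versus $|\tau+n^{2}|\ge 1$, where in the low-modulation region you should really keep the full Taylor series $e^{it(\tau+n^{2})}-1=\sum_{j\ge1}(it(\tau+n^{2}))^{j}/j!$ rather than a single term, and note that Minkowski's inequality is what converts the resulting $l^{2}_{n}L^{1}_{\tau}$ quantity into the $L^{1}_{\tau}l^{2}_{n}$ summand of $\|F\|_{Z^{k}}$; and your identification of that second summand of $Z^{k}$ as the genuinely indispensable ingredient is exactly the right point.

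The gap is in $(iii)$. You assert that the commutator $[\partial_{x}^{3},\varphi]=3\varphi'\partial_{x}^{2}+3\varphi''\partial_{x}+\varphi'''$ is \emph{``at most second-order in $x$, producing at worst a loss of one spatial derivative.''} A second-order operator maps $H^{s}$ to $H^{s-2}$: it loses two derivatives, not one. The endpoint you can actually prove is therefore $\varphi u\in Y^{s-2,\pm1}$, and interpolation with the lossless case $b=0$ gives $\varphi u\in Y^{s-2|b|,b}$, not $Y^{s-|b|,b}$. This $2|b|$ loss is sharp: testing with a truncated free wave $u=\chi(t)e^{i(nx+(n^{3}-\mu n)t)}$ and any $\varphi$ with $\hat{\varphi}(1)\neq0$, the mode of $\varphi u$ at spatial frequency $n+1$ sits at modulation $\langle n^{3}-(n+1)^{3}+\mu\rangle\sim 3n^{2}$, so $\|\varphi u\|_{Y^{\sigma,b}}\gtrsim\langle n\rangle^{\sigma+2b}$ while $\|u\|_{Y^{s,b}}\sim\langle n\rangle^{s}$. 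In other words, item $(iii)$ as printed cannot be proved because it is false as stated; the cited Lemma 3.4 of Laurent--Rosier--Zhang asserts the loss $2|b|$, and the exponent in the paper is presumably a typo. Your argument for $(iv)$ is unaffected, since $[\partial_{x}^{2},\varphi]=2\varphi'\partial_{x}+\varphi''$ really is first order and the loss $|b|$ is correct there.
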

 
 The next proposition showed in \cite[Lemma 4.2.]{Arbieto} and \cite[Lemma 3.3.]{Laurent}, concerns the trilinear estimates for the term $|u|^{2} u$ and bilinear estimates for the term $\partial_xv^2$, respectively. 
 
 \begin{proposition}\label{Est-Mul}
The following estimates hold.
 \begin{itemize}
     \item[$(i)$] For any $k \geq 0$, 
     \begin{equation}\label{Est-Tri}
\|\psi(t) u v \bar{w}\|_{Z^{k}} \lesssim\|u\|_{X^{k, 3 / 8}}\|v\|_{X^{k, 3 / 8}}\|w\|_{X^{k, 3 / 8}}.
\end{equation}

 \item[$(ii)$] For any $s \geq 0$, $T \in (0,1)$ and $v_{1}, v_{2} \in Y^{s, \frac{1}{2}}$ x-periodic functions having zero x-mean for all t, that is $[v_{1}] = [v_{2}] = 0$,  there exist two constants $\theta > 0$ and $C > 0$, such that
 \begin{equation}\label{Est-Bi}
\left\| \psi_{T}(t) \partial_{x}\left(v_{1} v_{2}\right)\right\|_{W^{s}} \leq CT^{\theta}\left\|v_{1}\right\|_{Y^{s, 1 / 2}}\left\|v_{2}\right\|_{Y^{s, 1 / 2}},
\end{equation}
where $C$ is independent of $T$, $v_{1}$ and $v_{2}$.
 \end{itemize}
 \end{proposition}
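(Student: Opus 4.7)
My plan is to prove both estimates by duality, reducing each one to a multilinear integral inequality that I can handle by Littlewood--Paley decomposition in the spatial frequency $n$ and in the modulation variable ($\tau+n^{2}$ for the Schr\"odinger side, $\tau-n^{3}+\mu n$ for the KdV side), combined with the $L^{p}_{t,x}$--Strichartz estimates for the free flows on $\mathbb{T}\times\R$ of Bourgain \cite{Bourgain1}. The companion spaces $Z^{k}$ and $W^{s}$ are precisely engineered so that after restricting the Duhamel operator to a compact time window, their norms are controlled by two pieces: an $X^{k,-1/2}$/$Y^{s,-1/2}$ piece (handled by duality against $X^{-k,1/2}$/$Y^{-s,1/2}$) and an $L^{1}_{\tau}\ell^{2}_{n}$ piece with modulation weight (handled by Cauchy--Schwarz in $\tau$ after trading a small amount of modulation regularity).

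For the trilinear estimate in (i), after this reduction it suffices to bound
$$
\Big|\ii\psi(t)\,uv\bar w\,\bar h\,dx\,dt\Big|
$$
for test functions $h$ in the dual space. Distributing the weight $\langle n\rangle^{k}$ over the four frequencies via $\langle n\rangle^{k}\lesssim\langle n_{1}\rangle^{k}+\langle n_{2}\rangle^{k}+\langle n_{3}\rangle^{k}+\langle n\rangle^{k}$ reduces everything to the case $k=0$. The core ingredient is then Bourgain's periodic $L^{4}_{t,x}$ estimate
$$
\|u\|_{L^{4}_{t,x}(\mathbb{T}\times\R)}\lesssim \|u\|_{X^{0,3/8}},
$$
which, combined with H\"older's inequality on four factors, produces precisely the index $3/8$ on the right-hand side and absorbs the cut-off $\psi(t)$.

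For the bilinear estimate in (ii), the key algebraic ingredient is the cubic resonance identity
$$
n^{3}-n_{1}^{3}-n_{2}^{3}=3\,n\,n_{1}\,n_{2}\qquad(n=n_{1}+n_{2}).
$$
Under the zero-mean hypothesis $[v_{1}]=[v_{2}]=0$ one has $n_{1},n_{2}\ne 0$, and on the support one may assume $n\ne 0$ as well (the resonance case $n=0$ is killed by the outer derivative), so $|n n_{1}n_{2}|\ge 1$. Consequently at least one of the three modulation weights is $\gtrsim |n n_{1}n_{2}|$, and the spatial derivative bringing down the factor $in$ is absorbed by this gain. The estimate then closes through the bilinear $L^{4}_{t,x}$ Strichartz estimate for the Airy group on $\mathbb{T}$, after dyadic decomposition. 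The factor $T^{\theta}$ is produced by the standard ``time-localization trick'': choosing the intermediate modulation index to be $\tfrac12-$ instead of $\tfrac12$, one gains a positive power of $T$ from the cut-off $\psi_{T}$ via the embedding $Y^{s,1/2}\hookrightarrow Y^{s,1/2-}$ composed with the $T^{(1/2)-(1/2-)}$ bound on $\psi_{T}$.

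The main obstacle in both estimates is bookkeeping around the modulation exponents: in (i) one cannot afford the endpoint $b=1/2$ and must carefully work at the $3/8$ level dictated by Bourgain's $L^{4}$ inequality, while in (ii) the $T^{\theta}$ gain requires sacrificing a sliver of modulation regularity without destroying the resonance gain $|n n_{1}n_{2}|\ge 1$, which in turn is only available thanks to the zero-mean hypothesis on $v_{1},v_{2}$. Since the detailed case analysis (high-high, high-low, and resonant interactions) is precisely what is carried out in \cite{Arbieto,Laurent}, I would present the architecture above and refer to those papers for the final checking of the dyadic sums.
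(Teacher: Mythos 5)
The paper does not prove this proposition at all: it is quoted verbatim from \cite[Lemma 4.2]{Arbieto} and \cite[Lemma 3.3]{Laurent}, so the ``proof'' in the paper is a citation. Your sketch correctly reproduces the architecture of the arguments in those references --- duality, Bourgain's periodic $L^{4}$ estimate $\|u\|_{L^{4}_{t,x}}\lesssim\|u\|_{X^{0,3/8}}$ for the trilinear Schr\"odinger bound, and the resonance identity $n^{3}-n_{1}^{3}-n_{2}^{3}=3nn_{1}n_{2}$ together with the zero-mean hypothesis for the bilinear KdV bound with the $T^{\theta}$ gain from time localization --- and, like the paper, ultimately defers the dyadic bookkeeping to the same sources, so the two approaches coincide.
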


At this point we present an elementary lemma concerning the stability of Bourgain’s spaces with respect to time localization which was proved in \cite[Lemma 4.4.]{Arbieto}.

\begin{proposition}\label{lema-bour}
Let $X_{\tau=h(\xi)}^{s, b}:=\left\{f:\langle\tau-h(\xi)\rangle^{b}\langle\xi\rangle^{s}|\hat{f}(\tau, \xi)| \in L^{2}\right\}$. Then, 
$$
\| \psi(t) f \|_{X^{s,b}_{\tau = h(\xi)}} \lesssim_{\psi, b} \|f\|_{X^{s,b}_{\tau = h (\xi)}}
$$
for any $x,b \in \mathbb{R}$. Furthermore, if $- \frac{1}{2} < b' \leq b < \frac{1}{2}$, then for any $0 < T < 1$ we 
have 
$$
\| \psi_{T}(t) f \|_{X^{s,b'}_{\tau = h(\xi)}} \lesssim_{\psi, b', b} T^{b-b'}\| f\|_{X^{s,-b}_{\tau = h(\xi)}}
$$
\end{proposition}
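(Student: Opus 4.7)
The proof of the two estimates in Proposition \ref{lema-bour} proceeds by Fourier analysis in the time variable. For the first inequality, multiplication by $\psi(t)$ in physical space corresponds to convolution by $\hat{\psi}(\tau)$ in the time--frequency variable, that is,
\begin{equation*}
\widehat{\psi f}(\xi,\tau) = (\hat{\psi}\ast_\tau \hat{f})(\xi,\tau).
\end{equation*}
Combining this with the elementary inequality
\begin{equation*}
\langle \tau - h(\xi)\rangle^{b} \leq C_b\, \langle \tau - \sigma\rangle^{|b|}\langle \sigma - h(\xi)\rangle^{b}, \qquad b \in \R,
\end{equation*}
and Young's convolution inequality in $\tau$, the claim reduces to the boundedness of $\|\langle \tau\rangle^{|b|} \hat{\psi}(\tau)\|_{L^{1}_{\tau}}$, which is finite because $\psi$ is Schwartz. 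Multiplying by the weight $\langle\xi\rangle^{s}$ and taking the $\ell^{2}_{\xi}$ norm then yields the desired bound with a constant depending only on $\psi$ and $b$.

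For the second estimate, the plan is to reduce the claim to a one-dimensional statement by freezing the spatial Fourier variable. Using that multiplication by $\psi_{T}(t)$ commutes with the modulation $e^{-ith(\xi)}$ (which acts only in $x$), the inequality is equivalent, pointwise in $\xi$, to the one-dimensional weighted Sobolev estimate
\begin{equation*}
\|\psi_{T} g\|_{H^{b'}(\R)} \leq C\, T^{b-b'}\, \|g\|_{H^{b}(\R)},\qquad -\tfrac{1}{2} < b' \leq b < \tfrac{1}{2},\ 0<T<1.
\end{equation*}

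To establish this 1D claim, I would first reduce by duality to the range $0 \leq b' \leq b < 1/2$. The trivial endpoint $b=b'=0$ is immediate from the first part of the proposition. At the nontrivial endpoint $b' = 0$ with $0 < b < 1/2$, the Sobolev embedding $H^{b}(\R) \hookrightarrow L^{q}(\R)$ with $1/q = 1/2 - b$ together with H\"older's inequality with exponents $(1/b,q)$ gives
\begin{equation*}
\|\psi_{T} g\|_{L^{2}} \leq \|\psi_{T}\|_{L^{1/b}}\|g\|_{L^{q}} \lesssim T^{b}\|g\|_{H^{b}}.
\end{equation*}
The full range $0 \leq b' \leq b < 1/2$ is then recovered by complex interpolation between the trivial and nontrivial endpoints.

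The main obstacle is the endpoint behaviour at $|b|=1/2$, which is sharp: the Sobolev embedding $H^{1/2} \hookrightarrow L^{\infty}$ fails and $\|\psi_{T}\|_{L^\infty}$ produces no $T$-gain, so one cannot push beyond the strict inequality $-1/2 < b' \leq b < 1/2$. Apart from this subtlety, the entire argument is standard time--frequency analysis and relies only on the Schwartz regularity of $\psi$ together with the convolution/modulation identities recalled above.
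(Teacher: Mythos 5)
The paper does not actually prove this proposition; it is quoted from \cite{Arbieto} (Lemma 4.4 there), so there is no internal proof to compare against. Your argument is the standard one for this lemma (convolution plus the Peetre-type inequality $\langle\tau-h(\xi)\rangle^{b}\lesssim\langle\tau-\sigma\rangle^{|b|}\langle\sigma-h(\xi)\rangle^{b}$ for the first part; reduction to the one-dimensional estimate $\|\psi_{T}g\|_{H^{b'}}\lesssim T^{b-b'}\|g\|_{H^{b}}$ by conjugating with the modulation $e^{ith(\xi)}$ for the second), and the first part together with the $b'=0$ endpoint via Sobolev embedding and H\"older is correct as written. Note also that you have silently (and correctly) read the right-hand side of the second estimate as $\|f\|_{X^{s,b}}$; the exponent $-b$ in the statement is a typo.

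There are, however, two genuine bookkeeping gaps in your treatment of the second estimate. First, the reduction ``by duality to the range $0\leq b'\leq b<1/2$'' is incomplete: duality sends the pair $(b,b')$ to $(-b',-b)$, so it exchanges the case $b'\leq b\leq 0$ with $0\leq b'\leq b$, but the mixed case $b'<0<b$ is mapped to itself and is not reached. You need an extra composition step: write $\psi=\tilde\psi\,\psi$ with $\tilde\psi\equiv 1$ on $\operatorname{supp}\psi$, apply the nonnegative-exponent case to get $\|\psi_{T}g\|_{L^{2}}\lesssim T^{b}\|g\|_{H^{b}}$ and then the nonpositive-exponent case to get $\|\tilde\psi_{T}(\psi_{T}g)\|_{H^{b'}}\lesssim T^{-b'}\|\psi_{T}g\|_{L^{2}}$. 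Second, the interpolation endpoints you name --- the trivial case $b=b'=0$ and the case $(b,0)$ --- both lie on the line $b'=0$, so interpolating between them can never produce $b'>0$. The correct pair is the uniform bound $M_{\psi_{T}}:H^{b}\to H^{b}$ from the first part of the proposition (norm $O(1)$ independent of $T$) and $M_{\psi_{T}}:H^{b}\to L^{2}$ (norm $O(T^{b})$); complex interpolation of the targets then gives $H^{b}\to H^{(1-\theta)b}$ with norm $O(T^{\theta b})$, which is exactly $T^{b-b'}$ at $b'=(1-\theta)b$. Both fixes are routine, and with them your proof is complete.
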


 Finally, to end this subsection, we invoke the following result related to the linear system associated with the KdV equation. Precisely, we present below the following Strichartz estimate for the KdV equation shown in \cite[Lemma 3.2.]{Laurent}.

\begin{proposition}\label{St-KdV}
Let $T > 0$. The following estimate is verified
\begin{eqnarray*}
\|v\|_{L^{4}(0, T; L^{4}(\mathbb{T}))} \leq C\|v\|_{Y_{T}^{0, \frac{1}{3}}}.
\end{eqnarray*}
In other words, $Y_{T}^{0, \frac{1}{3}}$ is continuously embedded in $L^{4}(0, T; L^{4}(\mathbb{T}))$. 
\end{proposition}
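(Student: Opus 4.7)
The plan is to adapt Bourgain's classical $L^4$ Strichartz estimate for the free Airy group on the torus to the Bourgain-space framework used here. Since $\|\cdot\|_{Y_T^{0, 1/3}}$ is defined as an infimum over extensions, I first extend $v$ off $[0, T]$ to some $w \in Y^{0, 1/3}$ with $\|w\|_{Y^{0, 1/3}} \leq 2\|v\|_{Y_T^{0, 1/3}}$, reducing to the unrestricted inequality $\|w\|_{L^4(\mathbb{R}\times\mathbb{T})} \lesssim \|w\|_{Y^{0, 1/3}}$. The spatial mean $[w](t) = \frac{1}{2\pi}\int_\mathbb{T} w(x, t)\,dx$ is handled separately: its $Y^{0, 1/3}$ norm coincides with $\|[w]\|_{H^{1/3}(\mathbb{R})}$, and the one-dimensional Sobolev embedding $H^{1/3}(\mathbb{R}) \hookrightarrow L^6(\mathbb{R})$ gives the $L^4$ bound on a bounded time interval. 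It thus remains to treat $w$ with $\hat w(0, \tau) \equiv 0$.

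For this zero-mean part, the core identity is
$$\|w\|_{L^4_{t,x}}^2 = \|w^2\|_{L^2_{t,x}} = \|\widehat{w^2}\|_{L^2(\mathbb{Z}\times\mathbb{R})},\qquad \widehat{w^2}(n,\tau) = \sum_{n_1+n_2=n} \int_\mathbb{R} \hat w(n_1, \tau_1)\,\hat w(n_2, \tau - \tau_1)\,d\tau_1.$$
Changing to modulation coordinates $\sigma_j := \tau_j - n_j^3 + \mu n_j$ and $\sigma := \tau - n^3 + \mu n$ produces the cubic resonance identity $\sigma = \sigma_1 + \sigma_2 - 3 n_1 n_2 n$ whenever $n_1 + n_2 = n$, and this algebraic relation is what allows the Bourgain exponent to descend all the way to $1/3$. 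Writing $\hat w(n_j, \tau_j) = \langle \sigma_j\rangle^{-1/3} G(n_j, \sigma_j)$ so that $\|G\|_{\ell^2_n L^2_\sigma} = \|w\|_{Y^{0, 1/3}}$, a Cauchy--Schwarz in the $(n_1, \sigma_1)$-sum factors off $\|G\|_{\ell^2 L^2}^2$ and reduces the problem to estimating
$$K(n, \sigma) := \sum_{\substack{n_1 + n_2 = n\\ n_1, n_2 \neq 0}}\int_\mathbb{R} \frac{d\sigma_1}{\langle \sigma_1\rangle^{2/3}\,\langle \sigma - \sigma_1 + 3 n_1 n_2 n\rangle^{2/3}}.$$
The $\sigma_1$-integral is bounded via the convolution estimate $\langle\cdot\rangle^{-2/3}\ast\langle\cdot\rangle^{-2/3} \lesssim \langle\cdot\rangle^{-1/3}$ (valid since $2\cdot\tfrac23 > 1$), reducing matters to a lattice sum in $n_1$ that one controls by exploiting the fact that $n_1 \mapsto 3 n_1(n-n_1)n$ is at most $2$-to-$1$, its preimages being the two roots of a quadratic equation.

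The hard part is this final lattice estimate. Near the vertex $n_1 = n/2$ of the quadratic $n_1 \mapsto n_1(n-n_1)n$, consecutive resonance values cluster and the naive sum is on the verge of diverging. The standard remedy, which I would carry out, is a dyadic decomposition in each of the three modulation scales $\sigma, \sigma_1, \sigma_2$ and in the size of the resonance $3 n_1 n_2 n$, followed by a case analysis separating the high-modulation regime from the resonance-dominated one. The arithmetic two-to-one structure of $n_1 \mapsto n_1 n_2 n$, together with this dyadic splitting, is ultimately what distinguishes the torus from $\mathbb{R}$ and pins the exponent at $1/3$ rather than $1/2^+$.
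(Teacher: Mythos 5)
The paper does not actually prove this proposition: it is invoked verbatim from \cite[Lemma 3.2]{Laurent}, which in turn rests on Bourgain's periodic $L^4$ estimate for the Airy group. So your attempt is measured against that classical argument rather than against anything in the text. Your outline does follow the correct strategy --- extension off $[0,T]$, separate treatment of the spatial mean via $H^{1/3}(\mathbb{R})\hookrightarrow L^6(\mathbb{R})$, the identity $\|w\|_{L^4}^2=\|w^2\|_{L^2}$ (legitimate here since $v$ is real), the resonance identity $\sigma=\sigma_1+\sigma_2-3n_1n_2n$, and a lattice-counting argument. The problem is that the step you reduce everything to is false, and the step you defer is the entire content of the theorem.

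Concretely: after the symmetric Cauchy--Schwarz and the convolution bound $\langle\cdot\rangle^{-2/3}\ast\langle\cdot\rangle^{-2/3}\lesssim\langle\cdot\rangle^{-1/3}$, your argument needs $\sup_{n,\sigma}\sum_{n_1}\langle\sigma+3n_1n_2n\rangle^{-1/3}<\infty$, and this supremum is infinite. The divergence is not primarily the vertex clustering you point to: for fixed $n\neq 0$ and $|n_1|\to\infty$ the summand decays only like $|n|^{-1/3}|n_1|^{-2/3}$, which is not summable in $n_1$; and at output frequency $n=0$ (with $n_1=-n_2\neq 0$) the resonance vanishes identically and the sum diverges outright, so the zero output mode of $w^2$ also needs a separate argument you do not supply. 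The exponent $b=1/3$ sits exactly below the threshold $b>3/8$ at which this one-shot Cauchy--Schwarz reduction closes (which is precisely why $3/8$ appears in the Schr\"odinger trilinear estimate of Proposition \ref{Est-Mul}). The dyadic decomposition you invoke is therefore not a patch on the lattice sum but a restructuring of the whole bilinear estimate, and even its naive version fails at $1/3$: localizing the two factors to modulations $2^j\le 2^k$, the counting you describe gives $\|u_ju_k\|_{L^2}\lesssim 2^{j/2}\,2^{k/4}\,\|u_j\|_{L^2}\|u_k\|_{L^2}$, and the resulting sum $\sum_{j\le k}2^{-j/3}2^{-k/3}2^{j/2}2^{k/4}=\sum_{j\le k}2^{j/6}2^{-k/12}$ diverges. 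Closing at exactly $b=1/3$ requires the additional input $\max(|\sigma|,|\sigma_1|,|\sigma_2|)\gtrsim |n\,n_1n_2|$, which ties the dominant modulation to the frequencies and sharpens the count $\lesssim(2^{\max}/|n|)^{1/2}+1$ precisely in the regime where the sum above blows up; this is the heart of Bourgain's proof and is absent from your sketch. As written, the proposal identifies the right framework but does not contain a proof of the decisive estimate; given that the paper itself only cites the result, the honest options are to cite it as well or to carry out the dyadic case analysis in full.
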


\subsection{Auxiliary estimates} The next auxiliary lemma helps us to prove the necessary estimates for the following derivative terms $\partial_xu$ and $\partial_xv$ in $ W^k_T$ and $Z^s_T$, $k, s \in \mathbb{R}$, respectively. 
 
 \begin{lemma}\label{LemaBou}
 Let $\epsilon > 0$ and $H: \mathbb{Z} \times \mathbb{R} \to \mathbb{R}$ such that
 \begin{eqnarray*}
 H(n, \tau) = \frac{|n|}{\left\langle \tau - n^{3} + \mu n\right\rangle^{\frac{1}{2} - \epsilon} \left\langle \tau + n^{2} \right\rangle^{\frac{1}{2} - \epsilon}}, \quad (n, \tau) \in \mathbb{Z} \times \mathbb{R},
 \end{eqnarray*}
 then there exists a constant  $C > 0$ such that 
 \begin{eqnarray*}
 H(n, \tau) \leq C, \quad \forall (n, \tau) \in \mathbb{Z} \times \mathbb{R}.
 \end{eqnarray*}
 \end{lemma}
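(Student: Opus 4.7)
The plan is to exploit the algebraic resonance identity
\begin{equation*}
(\tau + n^2) - (\tau - n^3 + \mu n) = n^3 + n^2 - \mu n,
\end{equation*}
which forces the two phase functions in the denominators of $H$ to be well separated as soon as $|n|$ is large. From this separation I will deduce that at least one of the two Japanese brackets $\langle \tau - n^3 + \mu n \rangle$ and $\langle \tau + n^2 \rangle$ dominates a definite positive power of $|n|$, and then a comparison of powers between the numerator $|n|$ and this lower bound will give the desired uniform bound on $H$.

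More concretely, I would split into two regimes. For the trivial range, note first that $H(0,\tau)=0$, while for any fixed window $0 < |n| \leq N_0$ the inequality $H(n,\tau) \leq N_0$ is immediate because $\langle \cdot \rangle \geq 1$. For the main regime, I would choose $N_0$ depending only on $\mu$ so that
\begin{equation*}
|n^3 + n^2 - \mu n| = |n|\,|n^2 + n - \mu| \geq \tfrac{1}{2}|n|^3 \quad \text{for all } |n| \geq N_0.
\end{equation*}
Using $\langle x \rangle \geq |x|$ together with the triangle inequality applied to the resonance identity, I obtain
\begin{equation*}
\langle \tau - n^3 + \mu n \rangle + \langle \tau + n^2 \rangle \;\geq\; |n^3 + n^2 - \mu n| \;\geq\; \tfrac{1}{2}|n|^3,
\end{equation*}
so that $M:=\max\bigl(\langle \tau - n^3 + \mu n \rangle,\, \langle \tau + n^2\rangle\bigr) \geq \tfrac{1}{4}|n|^3$. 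Bounding the remaining factor trivially from below by $1$ yields
\begin{equation*}
H(n, \tau) \;\leq\; \frac{|n|}{M^{\frac{1}{2} - \epsilon}} \;\leq\; C\, |n|^{1 - 3(\frac{1}{2} - \epsilon)} \;=\; C\, |n|^{-\frac{1}{2} + 3\epsilon},
\end{equation*}
which is uniformly bounded for $|n| \geq N_0$ as soon as $\epsilon \in (0, 1/6]$. Concatenating with the trivial regime completes the argument.

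The step that deserves the most care is precisely the passage from the additive separation of the two weights to a usable lower bound on their product: in the worst case only one of the two factors is genuinely large while the other can degenerate to $1$, and this asymmetry is what fixes the admissible smallness of $\epsilon$. Apart from this, the proof is purely elementary. The lemma is essentially the algebraic backbone behind the Bourgain-space estimates for the coupling terms $\partial_x u$ and $\partial_x v$: the factor $|n|$ in the numerator encodes the spatial derivative, while the product $\langle \tau - n^3 + \mu n \rangle^{\frac12-\epsilon}\langle \tau + n^2 \rangle^{\frac12-\epsilon}$ encodes one $\langle \cdot \rangle^{\frac12-\epsilon}$ factor from each of the $Y^{s,\,\cdot}$ and $X^{k,\,\cdot}$ weights that will appear when transferring between the KdV-type and Schr\"odinger-type norms in the coupled problem.
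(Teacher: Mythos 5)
Your proof is correct and follows essentially the same route as the paper's: both reduce the problem to the lower bound $\langle \tau - n^{3} + \mu n\rangle\,\langle \tau + n^{2}\rangle \gtrsim |n|^{3}$ for $|n|$ large, obtained from the triangle inequality applied to the difference of the two phases, and both treat small $|n|$ trivially. The only substantive difference is to your credit: you make explicit the restriction $\epsilon \le 1/6$ (without which the stated bound fails, e.g.\ at $\tau = n^{3}-\mu n$), whereas the paper leaves this condition implicit in the requirement $(\tfrac12-\epsilon)\alpha>1$ with $\alpha=3$.
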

 \begin{proof}
 It is enough to prove that, for every $(n, \tau) \in \mathbb{Z} \times \mathbb{R}$, with $|n|$ large enough, there exist $\alpha > 0$ and $C_{0} > 0$ such that 
 \begin{eqnarray}\label{Bou1}
 \langle \tau - n^{3} + \mu n \rangle \langle \tau + n \rangle \geq C_{0} |n|^{\alpha},
 \end{eqnarray}
 with $\left(\frac{1}{2} - \epsilon \right)\alpha > 1$.

 Indeed, let us assume that \eqref{Bou1} is verified. We then split the analysis into two cases, $|n| \leq C$, for some $C > 0$ large enough, and $|n| > C$. For the first case, note that considering a fixed $n_{0} \in \mathbb{Z}$, we get
 \begin{eqnarray}\label{Bou2}
 H(n_{0}, \tau) = \frac{|n_{0}|}{\left\langle \tau - n^{3}_{0} + \mu n_{0}\right\rangle^{\frac{1}{2} - \epsilon} \left\langle \tau + n^{2}_{0} \right\rangle^{\frac{1}{2} - \epsilon}} \leq |n_{0}|,
 \end{eqnarray}
  for all $\tau \in \mathbb{R}$. %Thus we have that, 
% \begin{eqnarray*}
% H(n, \tau) \leq C,
% \end{eqnarray*}
% for all $\tau \in \mathbb{R}$.
% 
On the other hand, \eqref{Bou1} asserts that  there exist $C_{0} > 0$ and $\alpha>0$ such that
 \begin{eqnarray*}
 H(n, \tau) \leq \frac{|n|}{C_{0}|n|^{\left(\frac{1}{2} - \epsilon\right)\alpha}} \leq \frac{1}{C_{0}},
 \end{eqnarray*}
for every $(n, \tau) \in \mathbb{Z} \times \mathbb{R}$, with $|n| > C$. So, by \eqref{Bou1} and \eqref{Bou2}, 
\begin{eqnarray*}
H(n, \tau) \leq \max\left\{\frac{1}{C_{0}}, C\right\}, \quad \forall (n, \tau) \in \mathbb{Z} \times \mathbb{R},
\end{eqnarray*}
and the lemma is proved. 

Now, to prove \eqref{Bou1} we start by observing that
\begin{eqnarray*}
\langle \tau - n^{3} + \mu n \rangle \langle \tau + n^{2} \rangle &=& \left(1 + |\tau - n^{3} + \mu n|^{2} \right)^{\frac{1}{2}} \left(1 + |\tau + n^{2}| \right)^{\frac{1}{2}}\\ 
&\geq& \frac{1}{4}(1 + |\tau - n^{3} + \mu n|) (1 + |\tau + n^{2}|)  \\
& \geq & \frac{1}{4} \left( |\tau - n^{3} + \mu n| + |\tau + n^{2}|\right) \\
& \geq & \frac{1}{4}|n^{3} - n^{2} - \mu n| \\
& \geq & \frac{1}{4}\left(|n|^{3} - |n^{2} + \mu n|\right) \\
& = & \frac{1}{2}|n|^{3} + \frac{1}{2}|n|^{3} - |n^{2} + \mu n| \\
& \geq & \frac{1}{8}|n|^{3} + \frac{1}{8}|n|^{3} - \frac{1}{4}|n|^{2} - \frac{1}{4}|\mu||n|.
\end{eqnarray*} 
Since we have that $\displaystyle\lim_{|n| \to \infty}\left(\frac{1}{8}|n|^{3} - \frac{1}{4}|n|^{2} - \frac{1}{4}|\mu||n|\right) = \infty$, \eqref{Bou1} is verified. 
 \end{proof}

 We are in a position to prove the last linear estimates.
 \begin{proposition}\label{Prop_Bou}
 Let $T > 0$. If $u \in X^{k, \frac{1}{2}}_{T}$ and $v \in Y^{s, \frac{1}{2}}_{T}$, for $k, s \in \mathbb{R}$, we have
 \begin{eqnarray} \label{Bou-misto1}
 \|\partial_{x}u\|_{W^{k}_{T}} &\lesssim& \|u\|_{X^{k, \frac{1}{2}-}_{T}}
  \end{eqnarray}
  and
   \begin{eqnarray} 
 \label{Bou-misto2}
 \|\partial_{x}v\|_{Z^{s}_{T}} &\lesssim& \|v\|_{Y^{s, \frac{1}{2}-}_{T}}  .
 \end{eqnarray}
 \end{proposition}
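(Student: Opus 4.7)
The plan is to bound each of the two pieces that constitute the $W^{k}$-norm (and symmetrically the $Z^{s}$-norm) directly by the right-hand side, using Lemma \ref{LemaBou} to dispose of the derivative loss $\partial_{x}$ (which produces the factor $|n|$ in frequency). Since \eqref{Bou-misto2} is identical to \eqref{Bou-misto1} after exchanging the Schr\"odinger and KdV resonance weights $\langle \tau+n^{2}\rangle \leftrightarrow \langle\tau-n^{3}+\mu n\rangle$, I only describe \eqref{Bou-misto1} in detail. The passage to restriction spaces at the end is routine: pick an extension $\tilde u$ of $u$ to $\mathbb{R}\times\mathbb{T}$ with $\|\tilde u\|_{X^{k,1/2-\epsilon}}\le 2\|u\|_{X^{k,1/2-\epsilon}_{T}}$ and work with $\partial_{x}\tilde u$, which extends $\partial_{x} u$, so that the global estimate descends to the restriction version by the infimum definition in \eqref{rest-spaces}.

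For the $Y^{k,-1/2}$ piece of $\|\partial_{x}\tilde u\|_{W^{k}}$, write out the Plancherel expression and extract the factor $n^{2}/\langle\tau-n^{3}+\mu n\rangle$; Lemma \ref{LemaBou} in the form $n^{2}\lesssim \langle\tau-n^{3}+\mu n\rangle^{1-2\epsilon}\langle \tau+n^{2}\rangle^{1-2\epsilon}$ leaves an overall factor $\langle\tau-n^{3}+\mu n\rangle^{-2\epsilon}\le 1$ together with the target weight $\langle \tau+n^{2}\rangle^{1-2\epsilon}$, and the sum-integral over $(n,\tau)$ then reconstructs exactly the square of the $X^{k,1/2-\epsilon}$-norm.

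For the mixed $L^{1}$--$l^{2}$ piece, the strategy is to split the single $|n|$ symmetrically via Lemma \ref{LemaBou}, obtaining $|n|/\langle\tau-n^{3}+\mu n\rangle \lesssim \langle \tau+n^{2}\rangle^{1/2-\epsilon}/\langle\tau-n^{3}+\mu n\rangle^{1/2+\epsilon}$. Setting $F(n,\tau):=\langle n\rangle^{k}\langle \tau+n^{2}\rangle^{1/2-\epsilon}|\hat{\tilde u}(n,\tau)|$, so that $\|F\|_{l^{2}_{n}L^{2}_{\tau}}=\|\tilde u\|_{X^{k,1/2-\epsilon}}$, I apply Cauchy--Schwarz in $\tau$ for each fixed $n$ against the kernel $\langle\tau-n^{3}+\mu n\rangle^{-1/2-\epsilon}$, whose $L^{2}_{\tau}$-norm is an $n$-independent constant $C_{\epsilon}$ by translation invariance of Lebesgue measure. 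This yields $\int F(n,\tau)\langle\tau-n^{3}+\mu n\rangle^{-1/2-\epsilon}\,d\tau\le C_{\epsilon}\|F(n,\cdot)\|_{L^{2}_{\tau}}$, and squaring and summing in $n$ produces exactly $C_{\epsilon}\|\tilde u\|_{X^{k,1/2-\epsilon}}$.

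The main subtlety lies in the symmetric splitting in Lemma \ref{LemaBou}: one half of the weight, $\langle \tau+n^{2}\rangle^{1/2-\epsilon}$, must reproduce the target $X^{k,1/2-\epsilon}$-weight while the other half, $\langle\tau-n^{3}+\mu n\rangle^{-1/2-\epsilon}$, must still retain enough decay to be square-integrable in $\tau$ uniformly in $n$. This forces $\epsilon>0$ strictly and is the origin of the $1/2-$ in the statement, since $C_{\epsilon}$ blows up as $\epsilon\to 0^{+}$; one cannot afford $b=1/2$ on the right-hand side. The estimate \eqref{Bou-misto2} follows word-for-word after interchanging $\langle \tau+n^{2}\rangle\leftrightarrow\langle\tau-n^{3}+\mu n\rangle$, $X^{k,b}\leftrightarrow Y^{s,b}$ and $W^{k}\leftrightarrow Z^{s}$ throughout.
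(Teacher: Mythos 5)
Your proof is correct and follows essentially the same route as the paper: both arguments hinge on Lemma \ref{LemaBou} to trade the frequency factor $|n|$ for the product of the two half-weights $\langle\tau-n^{3}+\mu n\rangle^{\frac12-\epsilon}\langle\tau+n^{2}\rangle^{\frac12-\epsilon}$, and both handle the $L^{1}_{\tau}$ companion piece by Cauchy--Schwarz against the square-integrable kernel $\langle\tau-n^{3}+\mu n\rangle^{-\frac12-\epsilon}$. The only cosmetic differences are that you bound the $Y^{k,-1/2}$ piece by a direct Plancherel computation where the paper phrases the same estimate via duality in \eqref{Bou3}, and that you make the extension step for the restriction norms explicit, which the paper leaves implicit.
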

 
 \begin{proof}
 We start observing that
 \begin{eqnarray*}
\|\partial_{x}u\|_{W^{k}} = \left\|\langle \tau - n^{3} + \mu n\rangle^{-\frac{1}{2}} \langle n\rangle^{k}\widehat{\partial_{x}u}\right\|_{L^{2}_{\tau}l^{2}_{n}} + \left\|\frac{\langle n\rangle^{k}\widehat{\partial_{x}u}(n, \tau)}{\left\langle\tau-n^{3} + \mu n\right\rangle}\right\|_{L^{1}_{\tau}l^{2}_{n}}.
\end{eqnarray*}
Since $\widehat{\partial_{x}u}(n, \tau) = in\widehat{u}(n, \tau)$ and
\begin{eqnarray*}
\frac{1}{\langle \tau - n^{3} + \mu n \rangle^{\frac{1}{2}}} \leq \frac{1}{\langle \tau - n^{3} + \mu n \rangle^{\frac{1}{2} - \epsilon}},
\end{eqnarray*}
for any $\epsilon > 0$, we have that
\begin{eqnarray*}
\left\|\langle \tau - n^{3} + \mu n\rangle^{-\frac{1}{2}} \langle n\rangle^{k}\widehat{\partial_{x}u}\right\|_{L^{2}_{\tau}l^{2}_{n}} \leq \left\|\frac{n}{\langle \tau - n^{3} + \mu n \rangle^{\frac{1}{2} - \epsilon}}\langle n\rangle^{k} \widehat{u} \right\|_{L^{2}l^{2}}.
\end{eqnarray*}
Thanks to the Lemma \ref{LemaBou} and using the duality argument, follows that
\begin{equation}\label{Bou3}  
\begin{split}
\left\|\frac{n}{\langle \tau - n^{3} + \mu n \rangle^{\frac{1}{2} - \epsilon}}\langle n\rangle^{k}\widehat{u} \right\|_{L^{2}l^{2}} =& \sup_{\|\varphi\|_{L^{2}l^{2}} \leq 1}\sum_{n \in \mathbb{Z}}\int_{-\infty}^{\infty}{\frac{|n|}{\langle \tau - n^{3} + \mu n \rangle^{\frac{1}{2} - \epsilon}}\langle n\rangle^{k}\widehat{u}(n, \tau)\overline{\widehat{\varphi}(n, \tau)}}d\tau \\ 
 = & \sup_{\|\varphi\|_{L^{2}l^{2}} \leq 1}\sum_{n \in \mathbb{Z}}\int_{-\infty}^{\infty}{H(n,\tau) \langle \tau + n^{2}\rangle^{\frac{1}{2} - \epsilon} \langle n\rangle^{k}\widehat{u}(n, \tau)\overline{\widehat{\varphi}(n, \tau)}}d\tau \\ 
\leq &\ C\sup_{\|\varphi\|_{L^{2}l^{2}} \leq 1}\sum_{n \in \mathbb{Z}}\int_{-\infty}^{\infty}{ \langle \tau + n^{2}\rangle^{\frac{1}{2} - \epsilon} \langle n\rangle^{k}\widehat{u}(n, \tau)\overline{\widehat{\varphi}(n, \tau)}}d\tau \\ 
 \leq &\ C\sup_{\|\varphi\|_{L^{2}l^{2}} \leq 1}\|\langle \tau + n^{2}  \rangle^{\frac{1}{2} - \epsilon}\langle n\rangle^{k}\widehat{u}\|_{L^{2}_{\tau}l^{2}_{n}}\|\varphi\|_{L^{2}_{\tau}l^{2}_{n}} \\ 
 \leq &\ C\|u\|_{X^{k, \frac{1}{2} -\epsilon}}.
\end{split}
\end{equation}
Finally, using the Cauchy-Schwarz inequality we get
\begin{equation*}
\begin{split}
\left\|\frac{\langle n\rangle^{k}\widehat{\partial_{x}u}(n, \tau)}{\left\langle\tau-n^{3} + \mu n\right\rangle}\right\|_{L^{1}_{\tau}l^{2}_{n}}
 \leq & \left\|\frac{1}{\left\langle\tau-n^{3} + \mu n\right\rangle^{\frac{1}{2} + \epsilon} \langle\tau-n^{3} + \mu n\rangle^{\frac{1}{2} - \epsilon}}\langle n\rangle^{k}\widehat{\partial_{x}u}(n, \tau)\right\|_{L^{1}_{\tau}l^{2}_{n}} \\ 
 \leq & \left\|\frac{1}{\langle\tau-n^{3} + \mu n\rangle^{\frac{1}{2} - \epsilon}}\langle n\rangle^{k}\widehat{\partial_{x}u}(n, \tau)\right\|_{L^{2}_{\tau}l^{2}_{n}}\\ \leq& \|u\|_{X^{k, \frac{1}{2} - \epsilon}},
\end{split}
\end{equation*}
where the last inequality holds using \eqref{Bou3}. So, \eqref{Bou-misto1} is verified. 

Analogously, we can prove \eqref{Bou-misto2}. In fact, observing that 
$$
\|\partial_{x}v\|_{Z^{s}} = \|\partial_{x}v\|_{X^{s, - \frac{1}{2}}} + \left\|\frac{\langle n\rangle^{s} \widehat{\partial_{x}v}(n, \tau)}{\left\langle\tau + n^{2}\right\rangle}\right\|_{L_{\tau}^{1}l_{n}^{2}}
$$
and
\begin{equation*}
\begin{split}
\left\|\frac{\langle n\rangle^{s} \widehat{\partial_{x}v}(n, \tau)}{\left\langle\tau + n^{2}\right\rangle}\right\|_{L_{\tau}^{1}l_{n}^{2}} =& \left\|\frac{ |n| }{\left\langle\tau + n^{2}\right\rangle^{\frac{1}{2} + \epsilon} \left\langle\tau + n^{2}\right\rangle^{\frac{1}{2} - \epsilon}}\langle n\rangle^{s} \widehat{v}(n, \tau) \right\|_{L_{\tau}^{1}l_{n}^{2}} \\ 
 \leq & \left\|\frac{ |n| }{ \left\langle\tau + n^{2}\right\rangle^{\frac{1}{2} - \epsilon}} \langle n\rangle^{s}\widehat{v}(n, \tau) \right\|_{L_{\tau}^{2}l_{n}^{2}} \\
 = & \left\|\frac{ |n| }{ \left\langle\tau + n^{2}\right\rangle^{\frac{1}{2} - \epsilon}\left\langle\tau - n^{3} + \mu n\right\rangle^{\frac{1}{2} - \epsilon}} \left\langle\tau - n^{3} + \mu n\right\rangle^{\frac{1}{2} - \epsilon}\langle n\rangle^{s}\widehat{v}(n, \tau) \right\|_{L_{\tau}^{2}l_{n}^{2}}, 
\end{split}
\end{equation*}
we conclude the proof of \eqref{Bou-misto2} arguing by duality as in \eqref{Bou3} and so the lemma is proved.
 \end{proof}

 \section{Existence of solutions for the NLS--KdV system}\label{Sec3}
In this section, we are able to prove that the following system 
 \begin{equation}\label{NLS-KDV-Ex}
\left\{\begin{array}{ll}
i \partial_{t} u+\partial_{x}^{2} u + i\varphi(t)^{2}a(x)^{2}u = i \partial_{x}v+\beta|u|^{2} u, & (x,t)\in\mathbb{T} \times \mathbb{R}_{+},\\
\partial_{t} v+\partial_{x}^{3} v+\frac{1}{2} \partial_{x}\left(v^{2}\right) + \mu\partial_{x}v = Re(\partial_{x}u)  - GG^{*}v, & (x,t)\in\mathbb{T} \times \mathbb{R}_{+},\\
u(x, 0)=u_{0}(x), \quad v(x, 0)=v_{0}(x), & x\in\mathbb{T},
\end{array}\right.
\end{equation}
where $a(x)$ satisfies \eqref{Def-a} and $Gv$ is defined in \eqref{Def-G}, is well-posed in $H^{s}$, for $s\geq0$. Precisely, we prove the result for $\widetilde{X}^{s}_T$ and $\widetilde{Y}^{s}_T$, defined by \eqref{X_k}. So, the result of well-posedness can be read as follows.
%%%%%
%%%%%
%%%%%
%%%%%
 \begin{theorem}\label{Teo-Ex}
 Let $T > 0$, $s \in \mathbb{R}_{+}$, $\beta, \mu \in \mathbb{R}$, $a \in C^{\infty}(\mathbb{T})$ and $\varphi\in C^{\infty}_{0}(\mathbb{R})$ taking real values. For a given $(u_{0}, v_{0}) \in H^{s}(\mathbb{T}) \times H^{s}_{0}(\mathbb{T})$ %there exists $T > 0$ such that %for every $f,\ g \in L^{2}(0, T; H^{s}(\mathbb{T}))$,   
 there exists a unique solution $(u,v) \in \widetilde{X}^{s}_{T}\times \left(\widetilde{Y}^{s}_{T} \cap L^{2}(0, T; L^{2}_{0}(\mathbb{T})) \right)$ of \eqref{NLS-KDV-Ex} %If we consider $s = 0$ then this solution can be globally extended in time. 
 and the same result is valid for $s = 0$ if $a \in L^{\infty}(\mathbb{T})$.
 Additionally, the flow map 
\begin{eqnarray} \label{flow}
F: L^{2}(\mathbb{T})\times L^{2}_{0}(\mathbb{T}) %\times L^{2}(0, T; L^{2}(\mathbb{T})) \times L^{2}(0, T; L^{2}(\mathbb{T})) 
&\to& \widetilde{X}^{0}_{T} \times \widetilde{Y}^{0}_{T} \\ \nonumber
 (u_{0}, v_{0}) &\mapsto& F (u_{0}, v_{0}) = (u,v)
\end{eqnarray}
 is Lipschitz on every bounded set.  
 \end{theorem}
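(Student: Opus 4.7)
The plan is to run a Picard iteration on the Duhamel formulation of \eqref{NLS-KDV-Ex} inside a closed ball of the product space $\widetilde{X}^s_T\times \widetilde{Y}^s_T$. Denoting by $U(t)=e^{it\partial_x^2}$ and $V(t)=e^{-t(\partial_x^3+\mu\partial_x)}$ the linear groups and multiplying by the standard time cutoff $\psi_T$, we define
\begin{equation*}
\Phi_1(u,v)(t)=\psi_1(t)U(t)u_0-i\psi_T(t)\int_0^t U(t-t')\bigl[i\partial_x v+\beta|u|^2u+i\varphi(t')^2 a(x)^2 u\bigr]\,dt',
\end{equation*}
\begin{equation*}
\Phi_2(u,v)(t)=\psi_1(t)V(t)v_0-\psi_T(t)\int_0^t V(t-t')\bigl[\tfrac12\partial_x(v^2)-\mathrm{Re}(\partial_x u)+GG^*v\bigr]\,dt'.
\end{equation*}
Note that $[v_0]=0$ and $[Gh]=0$ by construction of $G$, so the mean of $v$ is conserved, making the bilinear estimate \eqref{Est-Bi} applicable throughout the iteration.

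First I would control each component by invoking Proposition \ref{Est-Lin}(i)--(ii): the homogeneous parts are bounded by $\|u_0\|_{H^s}$ and $\|v_0\|_{H^s}$, while the Duhamel terms are controlled via the $Z^s$ and $W^s$ norms of the corresponding forcing. The cubic term $\beta|u|^2u$ is absorbed into $Z^s$ using the trilinear estimate \eqref{Est-Tri}, and the quadratic $\tfrac12\partial_x(v^2)$ into $W^s$ by the bilinear estimate \eqref{Est-Bi}, the latter producing a small factor $T^\theta$. The damping/potential terms $\varphi(t)^2 a(x)^2u$ and $GG^*v$ are handled by the multiplier mapping properties in Proposition \ref{Est-Lin}(iii)--(iv) combined with Proposition \ref{lema-bour}, which yields an extra $T^\theta$ gain at the cost of an $\epsilon$-loss in the $b$-index that the $\tfrac12{-}$ regularity of the Bourgain spaces easily accommodates.

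The main obstacle is the coupling between the two equations: the term $\partial_x v$ must be placed in $Z^s$ (the companion space of the Schr\"odinger-flavored $\widetilde{X}^s$) while $\mathrm{Re}(\partial_x u)$ must be placed in the KdV-flavored $W^s$, forcing us to trade one full spatial derivative between two spaces built around different dispersion relations $\tau+n^2$ and $\tau-n^3+\mu n$. This is precisely what Proposition \ref{Prop_Bou} (resting on the algebraic Lemma \ref{LemaBou}) delivers: one gains a derivative at the price of an $\epsilon$ in the $b$-index, namely $\|\partial_x v\|_{Z^s_T}\lesssim\|v\|_{Y^{s,1/2-}_T}$ and $\|\partial_x u\|_{W^s_T}\lesssim\|u\|_{X^{s,1/2-}_T}$. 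Using Proposition \ref{lema-bour} once more to convert the $1/2{-}$ regularity into a $T^\theta$ factor, one obtains a contraction estimate of the schematic form
\begin{equation*}
\|\Phi(u,v)\|_{\widetilde{X}^s_T\times\widetilde{Y}^s_T}\le C\bigl(\|u_0\|_{H^s}+\|v_0\|_{H^s}\bigr)+CT^{\theta}\,N(\|(u,v)\|_{\widetilde{X}^s_T\times\widetilde{Y}^s_T}),
\end{equation*}
with $N$ a polynomial encompassing both the nonlinear and the coupling/damping contributions.

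Picking $T=T(\|u_0\|_{H^s},\|v_0\|_{H^s})$ small enough, $\Phi$ becomes a contraction on a ball of $\widetilde{X}^s_T\times\widetilde{Y}^s_T$ and produces a unique local fixed point; the same contraction estimate applied to the difference of two solutions yields the Lipschitz continuity of the flow map \eqref{flow} on bounded sets. To extend the solution up to an arbitrary time horizon $T>0$, I would iterate the local construction, using the dissipation identity \eqref{derivative} which guarantees that the $L^2\times L^2_0$ norm is non-increasing; consequently the local existence time does not shrink and the argument can be repeated a finite number of times to cover $[0,T]$. For $s>0$ the same estimates, which hold verbatim for any $s\ge 0$, allow one to persistent the additional regularity, and for $s=0$ the assumption $a\in L^\infty(\mathbb{T})$ suffices because the multiplier estimate is only required at the energy level where no spatial smoothness of $a$ is needed.
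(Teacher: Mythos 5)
Your local theory (part of the argument up to the contraction on a ball of $\widetilde{X}^s_T\times\widetilde{Y}^s_T$) is essentially identical to the paper's: same Duhamel maps, same use of Proposition \ref{Est-Lin}, the trilinear estimate \eqref{Est-Tri}, the bilinear estimate \eqref{Est-Bi}, Proposition \ref{lema-bour} for the $T^{0+}$ gain, and, crucially, Proposition \ref{Prop_Bou} to trade the derivative in the coupling terms between the two dispersion relations. The Lipschitz dependence of the flow map is also obtained the same way.

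There is, however, a genuine gap in your globalization step for $s>0$. You argue that the dissipation identity \eqref{derivative} keeps the $L^2\times L^2_0$ norm non-increasing, ``consequently the local existence time does not shrink.'' This is correct only for $s=0$. For $s>0$ the contraction time produced by your own estimates depends on the $H^s\times H^s_0$ size of the data (the trilinear estimate \eqref{Est-Tri} is not tame: all three factors sit at level $s$, so the smallness condition involves $\|u\|_{X^{s,1/2}_T}^3$, not $\|u\|_{X^{0,1/2}_T}^2\|u\|_{X^{s,1/2}_T}$), and the energy identity gives no control whatsoever of the $H^s$ norm. Asserting that ``the same estimates \dots allow one to persist the additional regularity'' does not close this: persistence on the local interval is fine, but to iterate you need an a priori bound on $\|(u,v)(t)\|_{H^s\times H^s_0}$ that is uniform on $[0,T]$. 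The paper supplies the missing ingredient by differentiating the system in time, bounding $(u_t,v_t)$ in $\widetilde{X}^0_T\times\widetilde{Y}^0_T$ by the $L^2$ data, recovering $\|\partial_x^2u\|_{L^2}$ and $\|\partial_x^3v\|_{L^2}$ from the equations themselves (estimates \eqref{inter}--\eqref{inter-1}), and then invoking nonlinear interpolation \`a la Bona--Tartar to reach the intermediate values $0<s<2$ (and iterating for larger $s$). Without some version of this argument your global claim for $s>0$ is unproved. A secondary, smaller point: the contraction only gives uniqueness within the chosen ball; the paper adds a short separate argument (showing any integral-equation solution solves the PDE, so that $e^{-it\partial_x^2}(u-w)$ and $e^{-t(\partial_x^3+\mu\partial_x)}(v-z)$ are constant in time) to get uniqueness in the full class $\widetilde{X}^s_T\times\widetilde{Y}^s_T$.
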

 \begin{proof} We split the proof into three parts. Precisely, first, we prove the existence and uniqueness of a local solution. After that, we prove that the solution of \eqref{NLS-KDV-Ex} is globally defined. Finally, we can derive that the flow map \eqref{flow} is Lipschitz.  The idea of the proof is to follow the approach of  \cite{Laurent-esaim,Laurent} which was inspired by Bourgain  \cite{Bourgain,Bourgain1}. A main difference here is the necessity of Proposition \ref{Prop_Bou} which is essential for the proof of the fixed point theorem.  For the sake of completeness let us present the details of the proof.
 
\begin{itemize}
\item[(a)]\textbf{Existence and uniqueness of local solution.}
\end{itemize}
  
 Let $T > 0$ to be determined later and $t \in [0, T]$, system \eqref{NLS-KDV-Ex} is equivalent to the following Duhamel integral equations
$$
 u(t) =e^{it\partial_{x}^{2}}u_{0} - i\int_{0}^{t}{e^{i(t-t')\partial_{x}^{2}}\left\{i\partial_{x}v(t') + \beta|u|^{2}u(t') - i\varphi(t')^{2}a(x)^{2}u(t')  \right\}}dt' 
 $$
 and
 $$ 
 v(t) =e^{-t(\partial_{x}^{3} + \mu\partial_{x})}v_{0} + \int_{0}^{t}{e^{-(t-t')(\partial_{x}^{3} + \mu\partial_{x})}\left\{Re(\partial_{x}u)(t')  - GG^{*}v(t') - \frac{1}{2}\partial_{x}(v^{2})(t') \right\}}dt'  .
$$
Therefore, to find a solution to the system \eqref{NLS-KDV-Ex} in the class $\widetilde{X}^{s}_{T} \times \left( \widetilde{Y}^{s}_{T} \cap L^{2}(0, T; L^{2}_{0}(\mathbb{T})) \right)$ 
 is equivalent to proving the existence of a fixed point for the map 
$$
\Phi := (\Phi_{1}, \Phi_{2}): \widetilde{X}^{s}_{T} \times \left( \widetilde{Y}^{s}_{T} \cap L^{2}(0, T; L^{2}_{0}(\mathbb{T})) \right)\to \widetilde{X}^{s}_{T} \times \left( \widetilde{Y}^{s}_{T} \cap L^{2}(0, T; L^{2}_{0}(\mathbb{T})) \right)
$$
where
$$
\Phi_{1}(u, v) = e^{it\partial_{x}^{2}}u_{0} -i\int_{0}^{t}{e^{i(t-t')\partial_{x}^{2}}\left\{i\partial_{x}v(t') + \beta |u|^{2}u(t')  - i\varphi(t')^{2}a(x)^{2}u(t) \right\}}dt'
$$
and
\begin{equation*}
\Phi_{2}(u, v) =  e^{-t(\partial_{x}^{3} + \mu \partial_{x})}v_{0} +\int_{0}^{t}{e^{-(t-t')(\partial_{x}^{3} + \mu \partial_{x})}\left\{ Re(\partial_{x}u)(t') - \frac{1}{2}\partial_{x}(v^{2})(t') - GG^{*}v(t') \right\}}dt'. 
\end{equation*}

Our aim is to prove that $\Phi$ is a contraction in some ball of $\widetilde{X}^{s}_{T} \times \left( \widetilde{Y}^{s}_{T} \cap L^{2}(0, T; L^{2}_{0}(\mathbb{T})) \right)$. To do so, observe that the item $(i)$ of 
Proposition \ref{Est-Lin}, inequalities \eqref{Est-Tri} and \eqref{Est-Bi}, Proposition \ref{lema-bour} and estimate \eqref{Bou-misto1} give us
\begin{equation} \label{contr-u1}
\begin{split}
\|\Phi_{1}(u, v)\|_{\widetilde{X}^{s}_{T}} \lesssim&\ \|u_{0}\|_{H^{s}} + \|\partial_{x}v\|_{Z^{s}_{T}} + \| |u|^{2}u\|_{Z^{s}_{T}} + \|i\varphi(t')^{2}a(x)^{2}u\|_{Z^{s}_{T}} 
\\ \lesssim&\ \|u_{0}\|_{H^{s}} + \|\partial_{x}v\|_{Y^{s, \frac{1}{2}-}_{T}} + 
\|u\|_{X_T^{s, \frac{3}{8}}}^{3} + \|i\varphi(t')^{2}a(x)^{2}u\|_{X^{s, \frac{1}{2}}_{T}}   \\
 \lesssim &\ \|u_{0}\|_{H^{s}} +  T^{0+}\left(\|v\|_{Y_{T}^{s, \frac{1}{2}}} + \|u\|_{X_{T}^{s, \frac{1}{2}}}^{3} + \|u\|_{X_T^{s, \frac{1}{2}}} \right) 
\end{split}
\end{equation}
and, similarly,
\begin{equation} \label{contr-u2}
\begin{split}
\|\Phi_{1}(u,v) - \Phi_{1}(\tilde{u}, \tilde{v})\|_{\widetilde{X}_{T}^{s}} \lesssim& \ T^{0+}\|u - \tilde{u}\|_{\widetilde{X}_{T}^{s,\frac{1}{2}}}\left(1 + \|u\|_{X_T^{s, \frac{1}{2}}}^{2} + \|\tilde{u}\|_{X_{T}^{s, \frac{1}{2}}}^{2} \right)\\& + T^{0+}\|v - \tilde{v}\|_{\widetilde{Y}_{T}^{s}},
\end{split}
\end{equation}
for every $(u, v), (\tilde{u}, \tilde{v}) \in \widetilde{X}_T^{s} \times \left(\widetilde{Y}_T^{s} \cap L^{2}(0, T; L^{2}_{0}(\mathbb{T})\right)$.
On the other hand, observe that
$$
 \|Re(\partial_{x}u)\|_{\widetilde{Y}_T^{s}} = \left\| \frac{\partial_{x}u  + \overline{\partial_{x}u}}
 {2}\right\|_{\widetilde{Y}_T^{s}}\lesssim \|\partial_{x}u\|_{\widetilde{Y}_T^{s}},
$$
  then using items $(ii)$ and $(iv)$ of Proposition \ref{Est-Lin}, estimates \eqref{Est-Bi} and \eqref{Bou-misto2}, we have the following
\begin{equation} \label{contr-v1}
\begin{split}
\|\Phi_{2}(u, v)\|_{\widetilde{Y}_T^{s}} \lesssim& \ \|v_{0}\|_{H^{s}} + \|\partial_{x}u\|_{W^{s}_{T}} + \|\partial_{x}(v^{2})\|_{W^{s}_{T}} + \|GG^{*}v\|_{W^{s}_{T}} \\
\lesssim& \ \|v_{0}\|_{H^{s}}  + 
T^{0+}\left(\|u\|_{X_{T}^{s, \frac{1}{2}}} + \|v\|_{Y_{T}^{s, \frac{1}{2}}}^{2} \right) 
\end{split}
\end{equation}
and similarly, we get that,
\begin{equation} \label{contr-v2} 
\begin{split}
\|\Phi_{2}(u, v) - \Phi_{2}(\tilde{u}, \tilde{v})\|_{\widetilde{Y}_{T}^{s}} \lesssim &\ T^{0+}\|u - \tilde{u}\|_{\widetilde{X}_T^{s}} \\ &
+ T^{0+}\left(\|v\|_{Y_T^{s, \frac{1}{2}}} + \|\tilde{v}\|_{Y_T^{s, \frac{1}{2}}} \right)\|v - \tilde{v}\|_{\widetilde{Y}_T^{s}},  
\end{split}
\end{equation}
for every $(u, v), (\tilde{u}, \tilde{v}) \in \widetilde{X}_T^{s} \times \left(\widetilde{Y}_T^{s} \cap L^{2}(0, T; L^{2}_{0}(\mathbb{T})\right)$. 
We conclude from these estimates that if we take $T > 0$ small enough, the map $\Phi$ is a contraction in some suitable ball of $\widetilde{X}_T^{s} \times \left(\widetilde{Y}_T^{s} \cap L^{2}(0, T; L^{2}_{0}(\mathbb{T})\right)$, then it has a fixed point.

\vspace{0.2cm}

Now, we prove the uniqueness in the class $\widetilde{X}^{s}_{T} \times \widetilde{Y}^{s}_{T}$ for the integral equation $\Phi_1$ and $\Phi_2$. Set
$$
w(t) = e^{i t \partial_{x}^{2}} u_{0}-i \int_{0}^{t} e^{i\left(t-t^{\prime}\right) \partial_{x}^{2}}\left\{i \partial_{t} v\left(t^{\prime}\right)+\beta|u|^{2} u\left(t^{\prime}\right)-i \varphi\left(t^{\prime}\right)^{2} a(x)^{2} u\left(t^{\prime}\right)\right\} d t^{\prime}$$
and 
$$
z(t) = e^{-t\left(\partial_{x}^{3}+\mu \partial_{x}\right)} v_{0}+\int_{0}^{t} e^{-\left(t-t^{\prime}\right)\left(\partial_{x}^{3}+\mu \partial_{x}\right)}\left\{\operatorname{Re}\left(\partial_{x} u\right)\left(t^{\prime}\right)-GG^{*} v\left(t^{\prime}\right)-\frac{1}{2} \partial_{x}\left(v^{2}\right)\left(t^{\prime}\right)\right\} d t^{\prime}.
$$
Observe that, $\beta|u|^{2}u, \partial_{x}v \in X^{s, -\frac{1}{2}}_{T}$ and $\frac{1}{2}\partial_{x}(v^{2}), \partial_{x}u \in Y^{s, -\frac{1}{2}}_{T}$, hence we infer that,
\begin{equation*}
\begin{split}
\partial_{t}\left(\int_{0}^{t} e^{-it^{\prime} \partial_{x}^{2}}\left\{i \partial_{x} v+\beta|u|^{2} u-i \varphi^{2} a(x)^{2} u\right\}\left(t^{\prime}\right) d t^{\prime} \right) = e^{-it \partial_{x}^{2}}\left( i \partial_{x} v+\beta|u|^{2}u -i \varphi^{2} a^{2} u\right)
\end{split}
\end{equation*}
and
\begin{equation*}
\begin{split}
\partial_{t}&\left(\int_{0}^{t} e^{-t^{\prime}\left(\partial_{x}^{3}+\mu \partial_{x}\right)}\left\{\operatorname{Re}\left(\partial_{x} u\right)-GG^{*}v-\frac{1}{2}\partial_{x}\left(v^{2}\right)\right\}\left(t^{\prime}\right) d t^{\prime} \right) \\&= e^{-t(\partial_{x}^{3} + \mu\partial_{x})}\left(\operatorname{Re}\left(\partial_{x} u\right)-GG^{*}v-\frac{1}{2}\partial_{x}\left(v^{2}\right) \right),
\end{split}
\end{equation*}
in the distributional sense. This implies that $(w, z)$ is a solution of 
 \begin{equation*}
\left\{\begin{array}{ll}
 i \partial_{t}w + \partial_{x}^{2}w = i \partial_{x} v+\beta|u|^{2}u  -i \varphi^{2} a(\cdot)^{2} u,& (x,t)\in\mathbb{T} \times \mathbb{R}_+,\\
\partial_{t}z + \partial_{x}^{3}z + \mu \partial_{x}z = \operatorname{Re}\left(\partial_{x} u\right)-GG^{*}v-\frac{1}{2}\partial_{x}\left(v^{2}\right),& (x,t)\in\mathbb{T} \times \mathbb{R}_+.
\end{array}\right.
\end{equation*}
Therefore, it follows that, $r_{1}(t) = e^{-it\partial^2_x}(u-w)$  and $r_{2}(t) = e^{-t(\partial^3_x+\mu\partial_x)}(v-z)$ is a solution of  $\partial_{t}r_{1} = \partial_{t}r_{2} = 0$ and $r_{1}(t) = r_{2}(t) = 0$. Thus $r_{1} = r_{2} = 0$ and $(u,v)$ is the unique solution of the integral equations.
 
 \begin{itemize}
\item[(b)]\textbf{The solution is globally defined.}
\end{itemize}

 In order to prove that our solutions are global, we observe that, 
\begin{equation*}
\begin{split}
 \frac{d}{dt}\left(\|u(t)\|_{L^{2}(\mathbb{T})}^{2} + \|v(t)\|_{L^{2}_{0}(\mathbb{T})}^{2} \right) \leq & 0.
\end{split}
\end{equation*}
 Then, integrating over $t$ and using the Gronwall Inequality we get that, the $L^{2}(\mathbb{T}) \times L^{2}_{0}(\mathbb{T})$ norm of $(u,v)$ remains bounded by every $t \in [0, T]$, hence the local solution of \eqref{NLS-KDV-Ex} can be extended to a global one.
 
Now we prove that, if $(u_{0}, v_{0}) \in H^{2}(\mathbb{T}) \times H^{3}_{0}(\mathbb{T})$ then the solution $(u,v) \in C([0, T]; H^{2}(\mathbb{T})\times H^{2}_{0}(\mathbb{T}))$ of \eqref{NLS-KDV-Ex} can be extended for any $T > 0$ and then, by nonlinear interpolation  (see, e.g., \cite{Bona} and \cite{Tartar}), we can get the global well-posedness for the solution $(u,v) \in C([0, T]; H^{s}(\mathbb{T})\times H^{s}_0(\mathbb{T}))$ of \eqref{NLS-KDV-Ex} with $(u_{0}, v_{0}) \in H^{s}(\mathbb{T}) \times H^{s}_{0}(\mathbb{T})$ with $0 \leq s \leq 2$. 

Indeed, we start by considering a smooth solution $(u,v)$ of 
\eqref{NLS-KDV-Ex}. Let $(z, w) = (u_{t}, v_{t})$ so,  
  \begin{equation}\label{NLS-KDV-Ex-zw}
\left\{\begin{array}{ll}
i \partial_{t} z+\partial_{x}^{2} z + ia(x)^{2}z = i \partial_{x}w + 2\beta|u|^{2} z + \beta u^{2}\overline{z}, & (x,t)\in\mathbb{T} \times \mathbb{R}_{+},\\
\partial_{t} w+\partial_{x}^{3} w+ \partial_{x}\left(v w\right)+ \mu\partial_{x}w = Re(\partial_{x}z) - GG^{*}w, & (x,t)\in\mathbb{T} \times \mathbb{R}_{+},\\
z(x, 0)= z_{0}(x), \quad w(x, 0)= w_{0}(x), & x\in\mathbb{T},
\end{array}\right.
\end{equation}
where $$z_{0} = iu_{0xx} + v_{0x} - i\beta|u_{0}|^{2}u_{0} - a^{2}u_{0} \in L^{2}(\mathbb{T})\in L^{2}(\mathbb{T})$$ and $$w_{0} = -v_{0xxx} - \mu v_{0x} - v_{0}v_{0x} + Re(u_{0x}) - GG^{*}v_{0} \in L^{2}_{0}(\mathbb{T}).$$ If we consider $T > 0$ such that $\Phi$ is a contraction in some suitable ball of $\widetilde{X}^{0}_{T} \times \widetilde{Y}^{0}_{T}$ as in \eqref{contr-u2} and \eqref{contr-v2}, we have that 
\begin{eqnarray*}
\|(u,v)\|_{\widetilde{X}^{0}_{T} \times \widetilde{Y}^{0}_{T}} \leq C_{1},
\end{eqnarray*}
with $C_{1} = C_{1}(\|u_{0}\|_{L^{2}(\mathbb{T})}, \|v_{0}\|_{L^{2}_{0}(\mathbb{T})}) > 0$. The same computation as in \eqref{contr-u1} and \eqref{contr-u2} leads us to 
 \begin{eqnarray*}
 \|z\|_{\widetilde{X}^{0}_{T}} \lesssim \|z_{0}\|_{L^{2}(\mathbb{T})} + T^{0+} \left(\|w\|_{\widetilde{Y}^{0}_{T}} + \|z\|_{\widetilde{X}^{0}_{T}} + 2C_{1}^{2}\|z\|_{\widetilde{X}^{0}_{T}} \right) 
 \end{eqnarray*}
 and 
 \begin{eqnarray*}
 \|w\|_{\widetilde{Y}^{0}_{T}} \lesssim \|w_{0}\|_{L^{2}_{0}(\mathbb{T})} + T^{0+}\left(\|z\|_{\widetilde{X}^{0}_{T}} + C_{1}\|w\|_{\widetilde{Y}^{0}_{T}} \right),
 \end{eqnarray*}
or equivalently, 
 \begin{eqnarray*}
 \|(z,w)\|_{\widetilde{X}^{0}_{T} \times \widetilde{Y}^{0}_{T}} \leq C\|(z_{0}, w_{0})\|_{L^{2}(\mathbb{T}) \times L^{2}_{0}(\mathbb{T})}.
 \end{eqnarray*}
Therefore, using the previous inequalities, we have
 \begin{eqnarray*}
 \|(z, w)\|_{L^{\infty}(0, T; L^{2}(\mathbb{T}) \times L^{2}_{0}(\mathbb{T}))} \leq \|(z,w)\|_{\widetilde{X}^{0}_{T} \times \widetilde{Y}^{0}_{T}} \leq C\|(z_{0}, w_{0})\|_{L^{2}(\mathbb{T}) \times L^{2}_{0}(\mathbb{T})}.
 \end{eqnarray*}
Thanks to the system \eqref{NLS-KDV-Ex}, we get that
\begin{equation}  \label{inter}
\begin{split}
\|\partial_{x}^{3}v\|_{L^{2}_0(\mathbb{T})} \leq& \|w\|_{L^{2}_0(\mathbb{T})} + \|\operatorname{Re}(\partial_{x}u)\|_{L^{2}(\mathbb{T})} + \frac{1}{2} \|v\partial_{x}v\|_{L^{2}_0(\mathbb{T})} + 
|\mu|\|\partial_{x}v\|_{L^{2}_0(\mathbb{T})} + \|GG^{*}v\|_{L^{2}_0(\mathbb{T})} \\
\leq& \|w\|_{L^{2}_0(\mathbb{T})} + C\|\partial_{x}u\|_{L^{\infty}(\mathbb{T})} + \|v\|_{L^{2}_0(\mathbb{T})}\|\partial_{x}v\|_{L^{\infty}(\mathbb{T})} + \|v\|_{L^{2}_0(\mathbb{T})} +
|\mu|\|\partial_{x}v\|_{L^{2}_0(\mathbb{T})}  \\ 
 \leq & \|w\|_{L^{2}_0(\mathbb{T})} + C\|u\|_{L^{2}(\mathbb{T})}^{\frac{1}{2}}\|\partial_{x}^{2}u\|_{L^{2}(\mathbb{T})}^{\frac{1}{2}} + C(1 + \|v\|_{L^{2}_0(\mathbb{T})})\|v\|_{L^{2}_0(\mathbb{T})}^{\frac{1}{2}}\|\partial_{x}^{3}v\|_{L^{2}_0(\mathbb{T})}^{\frac{1}{2}} + 
|\mu|\|\partial_{x}v\|_{L^{2}_0(\mathbb{T})}\\
 \leq & \frac{1}{4}\|(\partial_{x}^{2}u, \partial_{x}^{3}v)\|_{L^{2}(\mathbb{T}) \times L^{2}_{0}(\mathbb{T})} + \|w\|_{L^{2}_0(\mathbb{T})} + C(\|v\|_{L^{2}_0(\mathbb{T})} + \|v\|_{L^{2}_0(\mathbb{T})}^{3} + \|u\|_{L^{2}(\mathbb{T})}),
\end{split}
\end{equation}
where we have used the Young inequality in this last inequality and
\begin{equation}  \label{inter-1}
\begin{split}
 \|\partial_{x}^{2}u\|_{L^{2}(\mathbb{T})}  \leq&  \|z\|_{L^{2}(\mathbb{T})} + \|\partial_{x}v\|_{L^{2}_0(\mathbb{T})} + |\beta|\|u\|_{L^{2}(\mathbb{T})}\|u\|_{L^{\infty}}^{2} + \|a\|_{L^{\infty}}\|u\|_{L^{2}(\mathbb{T})} \\ 
 \leq & \frac{1}{4}\|(\partial_{x}^{2}u, \partial_{x}^{3}v)\|_{L^{2}(\mathbb{T}) \times L^{2}_{0}(\mathbb{T})} + \|z\|_{L^{2}(\mathbb{T})} + C|\beta|\|u\|_{L^{2}(\mathbb{T})}^{2} + \|a\|_{L^{\infty}(\mathbb{T})}\|u\|_{L^{2}(\mathbb{T})} ,
\end{split}
\end{equation}
where, for this last inequality, we have used that $H^{2}(\mathbb{T})$ is embedded in $L^{\infty}(\mathbb{T})$, the interpolation between $H^{2}(\mathbb{T})$ and $L^{2}(\mathbb{T})$ and Young inequality. So adding up the estimates \eqref{inter} and 
 \eqref{inter-1} we can conclude that 
\begin{eqnarray*}
 \|(\partial_{x}^{2}u, \partial_{x}^{3}v)(t)\|_{L^{2}(\mathbb{T}) \times L^{2}_{0}(\mathbb{T})} \leq C(\|u_{0}\|_{L^{2}(\mathbb{T})}, \|v_{0}\|_{L^{2}_0(\mathbb{T})})\|(u_{0}, v_{0})\|_{H^{2}(\mathbb{T})\times H^{3}_0(\mathbb{T})},
\end{eqnarray*}
 from which follows that $(u,v)\in C(\mathbb{R}_{+}; H^{2}(\mathbb{T}) \times H^{2}_{0}(\mathbb{T}))$. A similar argumentation can be done for $(u_{0}, v_{0}) \in H^{2k}(\mathbb{T}) \times H^{3k}_0(\mathbb{T})$, $k \in \mathbb{N}$, and for other values of $s \in \mathbb{R}_{+}$ using again nonlinear interpolation.
 %%%%% 

%%%%%%%%%%%%%
%%%%%%%%%%%%%
%%%%% O Flow � Lipschtz
%%%%%%%%%%%%%
%%%%%%%%%%%%%

\begin{itemize}
\item[(c)]\textbf{The flow map is Lipschitz.}
\end{itemize}
 
 Finally we prove that the map \eqref{flow}  is Lipschitz on bounded sets. To do so, 
 consider $(u,v)$ and $(\widetilde{u}, \widetilde{v})$ solutions of \eqref{NLS-KDV-Ex} with initial data $(u_{0}, v_{0}), (\widetilde{u_{0}}, \widetilde{v_{0}})$, respectively. Arguing as in \eqref{contr-u2} and \eqref{contr-v2} we have that,
\begin{equation*}
\begin{split}
 \|u - \widetilde{u}\|_{\widetilde{X}^{0}_{T}} + 
 \|v - \widetilde{v}\|_{\widetilde{Y}^{0}_{T}} \leq& \ C\|u_{0} - \widetilde{u}_{0}\|_{L^{2}(\mathbb{T})} + C\|v_{0} - \widetilde{v}_{0}\|_{L^{2}_0(\mathbb{T})} %+ C\|f - \widetilde{f}\|_{L^{2}(0, T; L^{2}(\mathbb{T}))} 
 \\& %+ C\|g - \widetilde{g}\|_{L^{2}(0, T; L^{2}(\mathbb{T}))} 
 + T^{0+}\left(1 + \|u\|_{\widetilde{X}^{0}_{T}} + \|\widetilde{u}\|_{\widetilde{X}^{0}_{T}}\right) \|u-\widetilde{u}\|_{\widetilde{X}^{0}_{T}}\\& + CT^{0+}\left( 1 + \|v\|_{\widetilde{Y}_{T}^{0}} 
 + \|\widetilde{v}\|_{\widetilde{Y}_{T}^{0}}\right)\|v - \widetilde{v}\|_{\widetilde{Y}^{0}_{T}}.
\end{split}
\end{equation*}
  Then, for $T > 0$ small enough, 
  depending of the size of $(u_{0}, v_{0}), (\widetilde{u}_{0}, \widetilde{v}_{0})$,
  it follows that 
\begin{equation}\label{Lips-Flow}
\begin{split}
 \|u - \widetilde{u}\|_{\widetilde{X}^{0}_{T}} + 
 \|v - \widetilde{v}\|_{\widetilde{Y}^{0}_{T}} \leq& \ C\|u_{0} - \widetilde{u}_{0}\|_{L^{2}(\mathbb{T})} + C\|v_{0} - \widetilde{v}_{0}\|_{L^{2}_0(\mathbb{T})}.
\end{split}
\end{equation}
By iterating process we get that \eqref{Lips-Flow} is valid for every $T$, concluding the proof of Theorem \ref{Teo-Ex}. 
 \end{proof}

  \section{Stabilization result}\label{Sec4}
In this section we are able to prove one of the main results of the article, precisely, we prove that the following system 
 \begin{equation}\label{est-prob}
\left\{\begin{array}{ll}
i \partial_{t} u+\partial_{x}^{2} u + ia(x)^{2}u = i \partial_{x}v+\beta|u|^{2} u ,  & (x,t)\in\mathbb{T} \times \mathbb{R}_+,\\
\partial_{t} v+\partial_{x}^{3} v+\frac{1}{2} \partial_{x}\left(v^{2}\right) + \mu\partial_{x}v = Re(\partial_{x}u) - GG^{*}v,  & (x,t)\in\mathbb{T} \times\mathbb{R}_+,\\
u(x, 0)=u_{0}(x), \quad v(x, 0)=v_{0}(x), & x\in\mathbb{T},
\end{array}\right.
\end{equation}
where $\beta,\mu\in\mathbb{R}$, is asymptotically stable for $(u_{0}, v_{0}) \in L^{2}(\mathbb{T}) \times L^{2}_{0}(\mathbb{T}) $, when two control inputs are acting in both equation in $\omega\subset\mathbb{T}$.

\subsection{Proof of Theorem \ref{est-teo_int}} As usual in the literature using the \textit{``Compactness–Uniqueness Argument"} due Lions \cite{Lions}, under the hypothesis of the Theorem \ref{est-teo_int}, the global stabilization property is equivalent to show the following observability inequality: 

\vspace{0.2cm}

\textit{For any $T > 0$ there exists $C = C(T) > 0$ such that
\begin{eqnarray}\label{obs}
 \|(u_{0}, v_{0})\|_{L^{2}(\mathbb{T}) \times L^{2}_{0}(\mathbb{T})}^{2} 
 \leq C \left(\int_{0}^{T}{\|au(t)\|_{L^{2}(\mathbb{T})}^{2}}dt + \int_{0}^{T}{\|Gv(t)\|_{L^{2}_{0}(\mathbb{T})}^{2}}dt \right),
\end{eqnarray}
for any solution $(u, v)$ of the system \eqref{est-prob} with initial data $(u_{0}, v_{0}) \in L^{2}(\mathbb{T}) \times L^{2}_{0}(\mathbb{T})$ such that $\|(u_{0}, v_{0})\|_{L^{2}(\mathbb{T}) \times L^{2}_{0}(\mathbb{T})} \leq R_{0}$. }

\vspace{0.2cm}

Indeed, it follows from the energy estimate \eqref{derivative} that 
$$
\|(u, v)(T)\|_{L^{2}(\mathbb{T}) \times L^{2}_{0}(\mathbb{T})}^{2} = \|(u_{0}, v_{0})\|_{L^{2}(\mathbb{T}) \times L^{2}_{0}(\mathbb{T})}^{2} - \int_{0}^{T}{\|au(t)\|_{L^{2}(\mathbb{T})}^{2}}dt - \int_{0}^{T}{\|Gv(t)\|_{L^{2}_{0}(\mathbb{T})}^{2}}dt.
$$
Thus, from \eqref{obs}, we get that
$$
\|(u, v)(T)\|_{L^{2}(\mathbb{T}) \times L^{2}_{0}(\mathbb{T})}^{2} \leq (1 - C^{-1})\|(u_{0}, v_{0})\|_{L^{2}(\mathbb{T}) \times L^{2}_{0}(\mathbb{T})}^{2}, 
$$
and since the solution $(u,v)$ of the system \eqref{est-prob} satisfies the semigroup property we have, for every $m \in \mathbb{N}$, that
$$
\|(u, v)(mT)\|_{L^{2}(\mathbb{T}) \times L^{2}_{0}(\mathbb{T})}^{2} \leq (1 - C^{-1})^{m}\|(u_{0}, v_{0})\|_{L^{2}(\mathbb{T}) \times L^{2}_{0}(\mathbb{T})}^{2},
$$
which yields the desired result. \qed

\subsection{Proof of the observability inequality}
We argue by contradiction. If \eqref{obs} does not occur, there exist $T > 0$ 
and a sequence $\{(u_{0n}, v_{0n})\}_{n\in\mathbb{N}}$ such that
\begin{equation}\label{contrad1} 
 \|(u_{0n}, v_{0n})\|_{L^{2}(\mathbb{T}) \times L^{2}_{0}(\mathbb{T})} \leq R_{0} 
\end{equation}
and
\begin{equation}\label{contrad2}
 \int_{0}^{T}{\|au_{n}(t)\|_{L^{2}(\mathbb{T})}^{2}}dt + \int_{0}^{T}{\|Gv_{n}(t)\|_{L^{2}_{0}(\mathbb{T})}^{2}}dt \leq \frac{1}{n} \|(u_{0n}, v_{0n})\|_{L^{2}(\mathbb{T}) \times L^{2}_{0}(\mathbb{T})}^{2}. 
\end{equation}
Hence if we define $\alpha_{n} := \|(u_{0n}, v_{0n})\|_{L^{2}(\mathbb{T}) \times L^{2}_{0}(\mathbb{T})}^{2}$ we get  $$\alpha_{n} \leq R_{0}^{2},$$ so we can extract a subsequence, still denoted by the same index, such that 
$$\alpha_{n} \to \alpha,$$ with $\alpha \geq 0$.  We split the analysis into two cases: 
$$(i)\ \alpha > 0 \quad \text{and} \quad (ii)\ \alpha = 0.$$

\begin{itemize}\item[$(i)$] $\alpha > 0$.\end{itemize} 

It follows from Theorem $\ref{Teo-Ex}$ that the corresponding sequence of solutions $\{(u_{n}, v_{n})\}$ associated with 
the initial data $\{(u_{0n}, v_{0n})\}$ is bounded in both spaces 
$L^{\infty}(0, T; L^{2}(\mathbb{T}) \times L^{2}_{0}(\mathbb{T}))$ 
and $X_{T}^{0, \frac{1}{2}} \times Y_{T}^{0, \frac{1}{2}}$ then as $X_{T}^{0, \frac{1}{2}} \times Y_{T}^{0, \frac{1}{2}}$ is a separable Hilbert space compactly embedded in $X_{T}^{- k, \frac{1}{2} - \delta} \times Y_{T}^{ -s, \frac{1}{2}-\delta_{1}}$, for every $k, s > 0$ and $\delta, \delta_{1} > 0$, we can get a subsequence, which will be denoted with the same index, such that 
$$(u_{n}, v_{n}) \to (u,v) \text{ weakly in }X_{T}^{0, \frac{1}{2}} \times Y_{T}^{0, \frac{1}{2}}$$
and 
$$ (u_{n}, v_{n}) \to (u,v) \text{ strongly in } X_{T}^{-k,  \frac{1}{2}-\delta} \times Y_{T}^{-s, \frac{1}{2} - \delta_{1}}.$$
Moreover, by \eqref{contrad2} we deduce that
$$au_{n} \to 0 \quad \hbox{ in } L^{2}(0, T; L^{2}(\mathbb{T}))$$
and 
$$Gv_{n} \to 0 = Gv \quad \hbox{ in } L^{2}(0, T; L^{2}_{0}(\mathbb{T})),$$
from which follows that $u \equiv 0$ in $\omega$ and $v = c(t)$ in $\omega$. 

On the other hand, by Proposition \ref{Est-Mul}, we infer that  $\beta |u_{n}|^{2}u_{n}$ is bounded in $X_{T}^{0, -\frac{1}{2}}$ and $\frac{1}{2}\partial_{x}(v^{2}_{n})$ is bounded in  $Y^{0, -\frac{1}{2}}_{T}$. Moreover, note that, $(\beta |u_{n}|^{2}u_{n})$ is bounded in $X^{0, -b'}_{T}$ for $\frac{3}{8} < b' < \frac{1}{2}$, which is compactly embedded in $X_{T}^{-1, -\frac{1}{2}}$, then we can extract a subsequence, still denoted by the same index, such that, for some $f \in X_{T}^{-1, -\frac{1}{2}}$,
\begin{eqnarray*}
\beta |u_{n}|^{2}u_{n} \to f \quad \hbox{ strongly in } X^{-1, -\frac{1}{2}}_{T}.
\end{eqnarray*}
As $Y_{T}^{0, \frac{1}{2}}$ is continuously embedded in $L^{4}(\mathbb{T} \times (0, T))$, thanks to the Proposition \ref{St-KdV}, $(\partial_{x}(v_{n}^{2}))$ is bounded in $L^{2}(0, T; H^{-1}(\mathbb{T})) = Y_{T}^{-1, 0}$, so, by interpolation between 
those spaces,  we get that $(\partial_{x}(v_{n}^{2}))$ is bounded in $Y_{T}^{- \theta, -\frac{1}{2} + \frac{\theta}{2}}$, $\theta \in (0,1)$, which is compactly embedded in $Y_{T}^{-1, -\frac{1}{2}}$. Therefore, we can extract a subsequence of $\left(\frac{1}{2}\partial_{x}(v_{n}^{2})\right)$ in $Y_{T}^{-1, -\frac{1}{2}}$, which will be denoted by the same index, such that
$$\frac{1}{2}\partial_{x}(v_{n}^{2}) \to \tilde{g} \quad \hbox{ strongly in } Y_{T}^{-1, -\frac{1}{2}}. $$
With these convergences in hand, we can pass to the limit in $n$, to obtain  that $(u,v)$ satisfies
 \begin{equation*}
\left\{\begin{array}{ll}
i\partial_{t}u + \partial_{x}^{2}u = i\partial_{x}v + f,  & (x,t)\in\mathbb{T} \times (0,T),\\
\partial_{t}v + \partial_{x}^{3}v + \mu\partial_{x}v = Re(\partial_{x}u) + \tilde{g},  & (x,t)\in\mathbb{T} \times (0,T).
\end{array}\right.
\end{equation*}

Now, consider  $w_{n} = u_{n} - u$, $z_{n} = v_{n} - v$, $$f_{n} = -ia^{2}u_{n} +\beta |u_{n}|^{2}u_{n} - f$$ and $$g_{n} = -GG^{*}v_{n} - \frac{1}{2}\partial_{x}(v_{n}^{2}) -  \tilde{g}.$$ Observe that  
$$
\int_{0}^{T}{\|Gz_{n}\|_{L^{2}_0(\mathbb{T})}^{2}}dt = 
\int_{0}^{T}{\|Gv_{n}\|_{L^{2}_0(\mathbb{T})}^{2}}dt + \int_{0}^{T}{\|Gv\|_{L^{2}_0(\mathbb{T})}^{2}}dt - 2\int_{0}^{T}{(Gv_{n}, Gv)_{0}}dt \to 0.
$$
On the other hand, we have
$$
Gz_{n}(x,t) = g(x)z_{n}(x, t) + \int_{\mathbb{T}}{g(y)z_{n}(y, t)}dy.
$$
Since $z_{n} \to 0$ weakly in $Y^{0, \frac{1}{2}}_{T}$, using Rellich theorem we get that 
$\int_{\mathbb{T}}{g(y)z_{n}(y, \cdot)}dy$ strongly converges to $0$ in $L^{2}(0, T)$, from which follows that
$$
\int_{0}^{T}{\int_{\mathbb{T}}}{g^{2}(x)z_{n}(x,t)^{2}}dxdt \to 0.
$$
Thus, 
 \begin{equation*}
\left\{\begin{array}{ll}
i\partial_{t}w_{n} + \partial_{x}^{2}w_{n} = i\partial_{x}z_{n} + f_{n},  & (x,t)\in\mathbb{T} \times (0,T),\\
\partial_{t}z_{n} + \partial_{x}^{3}z_{n} + \mu \partial_{x}z_{n} = Re(\partial_{x}w_{n}) + g_{n},  & (x,t)\in\mathbb{T} \times (0,T)
\end{array}\right.
\end{equation*}
with 
$$
(w_{n}, z_{n}) \to (0, 0) \quad \hbox{ in } L^{2}(0, T; L^{2}(\widetilde{\omega}) \times L^{2}_{0}(\widetilde{\omega}))
$$
and
$$
(f_{n}, g_{n}) \to (0, 0) \quad \hbox{ strongly in } X^{-1, -\frac{1}{2}}_{T} \times Y^{-1, -\frac{1}{2}}_{T},
$$
where $\widetilde{\omega} = supp(g)\cap supp(a)$.

Now, we are in a position to use the results of Appendix \ref{Appendix1} and \ref{Appendix2}. First, using the  propagation of compactness, given in Proposition \ref{Prop-Comp}, we have 
$$
(z_{n}, w_{n}) \to (0, 0) \quad \hbox{ in } L^{2}_{loc}(0, T; L^{2}(\mathbb{T}) \times L^{2}_{0}(\mathbb{T})).
$$
Then we can pick $t_{0} \in [0, T]$  such that $(z_{n}(t_{0}), w_{n}(t_{0})) \to (0,0)$ strongly in $L^{2}(\mathbb{T})\times L^{2}_{0}(\mathbb{T})$. Denote by $(\widetilde{u}, \widetilde{v})$ the solution of the problem
 \begin{equation*}
\left\{\begin{array}{ll}
i \partial_{t}\widetilde{u} + \partial_{x}^{2}\widetilde{u} = i \partial_{x}\widetilde{v} + \beta |\widetilde{u}|^{2}\widetilde{u},  & (x,t)\in\mathbb{T} \times (0,T),\\
\partial_{t}\widetilde{v} + \partial_{x}^{3}\widetilde{v} + \mu \partial_{x}\widetilde{v} + \partial_{x}(\widetilde{v}^{2}) = Re(\partial_{x}\widetilde{u}),  & (x,t)\in\mathbb{T} \times (0,T),\\
(\widetilde{u}, \widetilde{v})(t_{0}) = (u(t_{0}), v(t_{0})), & t_0\in(0,T).
\end{array}\right.
\end{equation*}
Theorem \ref{Teo-Ex} gives us that the flow map is Lipschitz on bounded sets, hence as 
$$(u_{n}(t_{0}), v_{n}(t_{0})) \to (u(t_{0}), v(t_{0})) \quad \text{in } L^{2}(\mathbb{T})\times L^{2}_0(\mathbb{T}),$$ 
$$i a^{2}u_{n} \to 0 \quad \text{in } L^{2}(0, T; L^{2}(\mathbb{T}))$$ and $$Gv_{n} \to 0 \quad \text{in } L^{2}(0, T; L^{2}_0(\mathbb{T})),$$follows that $$(u_{n}, v_{n}) \to (u, v)\quad \text{in}\quad  
X_{T}^{0, \frac{1}{2}} \times Y_{T}^{0, \frac{1}{2}}.$$ 
Therefore,  putting together all these convergences and passing to the limit, we can conclude that $(u,v)$ solves
\begin{equation}\label{4.5}
\left\{\begin{array}{ll}
i \partial_{t} u+\partial_{x}^{2} u = i \partial_{x} v+\beta|u|^{2} u,  & (x,t)\in\mathbb{T} \times (0,T),\\
\partial_{t} v+\partial_{x}^{3} v+\frac{1}{2} \partial_{x}\left(v^{2}\right)+\mu \partial_{x} v=Re\left(\partial_{x} u\right),  & (x,t)\in\mathbb{T} \times (0,T),\\
u(x,t) = 0 \quad v(x,t)= c(t),  & (x,t)\in\omega \times (0, T),\\
u(x, 0)=u_{0}(x), \quad v(x, 0)=v_{0}(x), &x\in\mathbb{T}.
\end{array}\right.
\end{equation}
Note that, using the second equation of \eqref{4.5} we have that $\partial_{t}v = 0$ on $\omega \times (0, T)$, hence $c(t) \equiv c$,  with $c \in \mathbb{R}$, for all $t \in (0, T)$. Thus thanks to the unique continuation property given by Corollary \ref{Uniq-Cont-Cor}, we ensure that $(u,v) = (0, 0)$.  From this, we conclude that 
$$
\|(u_{n}(0), v_{n}(0))\|_{L^{2}(\mathbb{T}) \times L^{2}_{0}(\mathbb{T})} \to 0,
$$
which is a contradiction with our hypothesis $\alpha > 0$.

\begin{itemize}\item[$(ii)$] $\alpha = 0.$\end{itemize}

Observe that we can assume  $\alpha_{n} > 0$ for all $n \in \mathbb{N}$. Pick the function as follows $w_{n} = \frac{u_{n}}{\alpha_{n}}$ and 
$z_{n} = \frac{v_{n}}{\alpha_{n}}$, for all $n \geq 1$. Thus,
$$
\|(w_{0n}, z_{0n})\|_{L^{2}(\mathbb{T}) \times L^{2}_{0}(\mathbb{T})}^{2} = 1
$$
and $(w_{n}, z_{n})$ satisfies the system 
\begin{equation*}
\left\{\begin{array}{ll}
 i \partial_{t}w_{n} + \partial_{x}^{2}w_{n} = i\partial_{x} z_{n} + \alpha_{n}^{2}\beta|v_{n}|^{2}v_{n} - ia(x)^{2}w_{n}, & (x,t)\in\mathbb{T} \times (0,T),\\
\partial_{t}z_{n} + \partial_{x}^{3}z_{n} + \frac{\alpha_{n}^{2}}{2}\partial_{x}(v^{2}_{n}) = Re(\partial_{x}w_{n}) - GG^{*}z_{n},  & (x,t)\in\mathbb{T} \times (0,T)
\end{array}\right.
\end{equation*}
with 
$$
 \int_{0}^{T}{\|a(x)w_{n}(t)\|_{L^{2}(\mathbb{T})}^{2}}dt + \int_{0}^{T}{\|Gz_{n}(t)\|_{L^{2}_{0}(\mathbb{T})}^{2}}dt \leq \frac{1}{n}, \quad\forall n \in \mathbb{N}. 
$$
By a boot-strap argument we conclude that $(w_{n}, z_{n})$ 
is bounded in $X^{0, \frac{1}{2}}_{T} \times Y_{T}^{0, \frac{1}{2}}$. Hence we can extract a subsequence of $\{(w_{n}, z_{n}) \}$, still denoted by the same index,  such that 
$$ (w_{n}, z_{n}) \to (w,z) \quad  \text{weakly in} \quad X_{T}^{0, \frac{1}{2}} \times Y_{T}^{0, \frac{1}{2}}$$ and
$$ (w_{n}, z_{n}) \to (w,z) \quad  \text{strongly in} \quad X_{T}^{0-, -\frac{1}{2}} \times Y_{T}^{0-, -\frac{1}{2}}.$$ By Proposition \ref{Est-Mul} 
we have that $\beta |w_{n}|^{2}w_{n}$ is bounded in 
$X_{T}^{0, -\frac{1}{2}}$ then $$\alpha_{n}^{2}\beta |w_{n}|^{2}w_{n} \to 0 \quad \text{strongly in }\quad X_{T}^{0, -\frac{1}{2}},$$
 as $\alpha_{n} \to 0$.  Similarly, due to the fact that  $\{\partial_{x}(v_{n}^{2}) \}$ is bounded in $Y_{T}^{0, -\frac{1}{2}}$ we have that $$\alpha_{n}\partial_{x}(v_{n}^{2}) \to 0 \quad \text{strongly in } \quad Y_{T}^{0, -\frac{1}{2}}.$$ Hence, $(w, z)$ solves the following system 
\begin{equation*}
\left\{\begin{array}{ll}
 i \partial_{t}w + \partial_{x}^{2}w = i \partial_{x}z, & (x,t)\in\mathbb{T} \times (0,T),\\
\partial_{t}z + \partial_{x}^{3}z + \mu \partial_{x}z= Re(\partial_{z}w), & (x,t)\in\mathbb{T} \times (0,T),\\
w(x,t) = 0, \quad z(x,t) =c,& (x,t)\in \omega\times(0,T).
\end{array}\right.
\end{equation*}
Again using Corollary $\ref{Uniq-Cont-Cor}$  we have that  $w = z = 0$ in $(0, T) \times \mathbb{T}$. Then, by an application of Proposition \ref{Prop-Comp}, %we can conclude that 
$$
(w_{n}, z_{n}) \to (0,0) \quad \text{ in  } \quad L^{2}_{loc}(0, T; L^{2}(\mathbb{T})\times L^{2}_{0}(\mathbb{T})).
$$
So we can conclude the proof as in the first case, showing the observability inequality \eqref{obs}. \qed 

\section{Controllability results }\label{Sec5}
In this section, we are interested to prove the exact controllability for the following nonlinear system \begin{equation}\label{prob-contr}
\left\{\begin{array}{ll}
i \partial_{t} u+\partial_{x}^{2} u = i \partial_{x} v+\beta|u|^{2} u + f, & (x,t)\in\mathbb{T} \times (0,T),\\
\partial_{t} v+\partial_{x}^{3} v + \frac{1}{2} \partial_{x}\left(v^{2}\right) + \mu\partial_{x}v = \operatorname{Re}\left(\partial_{x} u\right) + Gh,  & (x,t)\in \mathbb{T} \times (0,T),\\
u(x, 0)=u_{0}(x), \quad v(x, 0)=v_{0}(x), & x \in \mathbb{T},
\end{array}\right.
\end{equation}
where $f$ and $h$ are control functions. Before presenting the result for this system, we first need to prove the result for the linear system associated with \eqref{prob-contr},
\begin{equation}\label{prob-contr_lin}
\left\{\begin{array}{ll}
i \partial_{t} u+\partial_{x}^{2} u =  %i \partial_{x} v+ 
f, & (x,t)\in\mathbb{T} \times (0,T),\\
\partial_{t} v+\partial_{x}^{3} v + \mu\partial_{x}v = %\operatorname{Re}\left(\partial_{x} u\right) + 
Gh,  & (x,t)\in \mathbb{T} \times (0,T),\\
u(x, T) = 0, \quad v(x, T) = 0, & x \in \mathbb{T}.
\end{array}\right.
\end{equation}
In both cases we take $f$ and $h$ with a special form, that is, 
\begin{equation}\label{fg}
f:=a^2\varphi^2h_{1} \quad \text{ and } \quad h:=G^{*}h_{2}=Gh_2,
\end{equation} 
with $a$ and $G$ satisfying \eqref{Def-a} and \eqref{Def-G}, respectively, and $h_{1},h_{2}$ in some appropriated space.

It is classical in the literature that the observability inequality for the NLS equation holds, precisely, the following proposition related with the observability inequality for the adjoint system associated to Schrödinguer equation is verified and for details see, e.g. \cite{Lebeau} and \cite{Machtyngier}.
\begin{proposition}
Let $\omega \subset \mathbb{T}$. 
For any $a = a(x) \in C^{\infty}(\mathbb{T})$ and $\varphi = \varphi(t) \in C^{\infty}(]0, T[)$ real-valued such that $a \equiv 1$ on $\omega \times ]0, T[$ and $\varphi \equiv 1$ 
on $\left[\frac{T}{3}, \frac{2T}{3} \right]$ there exists $C:=C(T) > 0$ such that 
\begin{eqnarray}\label{control-LS}
\|\phi_{0}\|^{2}_{L^{2}(\mathbb{T})} \leq  
C\int_{0}^{T}{\|a(x)\varphi(t)e^{it\partial_{x}^{2}}\phi_{0}\|_{L^{2}(\mathbb{T})}^{2} }dt.
\end{eqnarray}
\end{proposition}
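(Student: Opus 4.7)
The plan is to adapt the classical compactness--uniqueness argument of Bardos--Lebeau--Rauch, as carried out for the Schrödinger equation on compact manifolds by Lebeau \cite{Lebeau} and on bounded domains by Machtyngier \cite{Machtyngier}. I would argue by contradiction: if \eqref{control-LS} fails, then there is a sequence $\{\phi_{0,n}\} \subset L^{2}(\mathbb{T})$ with $\|\phi_{0,n}\|_{L^{2}(\mathbb{T})} = 1$ such that
$$
\int_{0}^{T} \|a(x)\varphi(t) e^{it\partial_{x}^{2}}\phi_{0,n}\|_{L^{2}(\mathbb{T})}^{2}\,dt \longrightarrow 0.
$$
Setting $\phi_{n}(t) = e^{it\partial_{x}^{2}}\phi_{0,n}$ and extracting a weakly convergent subsequence $\phi_{0,n} \rightharpoonup \phi_{0}$ in $L^{2}(\mathbb{T})$, the unitarity of the Schrödinger group yields $\phi_{n} \rightharpoonup \phi := e^{it\partial_{x}^{2}}\phi_{0}$ weakly in $L^{2}(0,T;L^{2}(\mathbb{T}))$. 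Passing to the limit in the vanishing integral forces $a(x)\varphi(t)\phi(t,x) \equiv 0$, and the hypotheses $a \equiv 1$ on $\omega$ and $\varphi \equiv 1$ on $[T/3,2T/3]$ then give $\phi \equiv 0$ on $\omega \times (T/3, 2T/3)$.

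The second step is a unique continuation property for the free Schrödinger equation: a finite-energy solution of $i\partial_{t}\phi + \partial_{x}^{2}\phi = 0$ vanishing on a non-empty open subcylinder must vanish identically. On the torus this may be deduced from Holmgren's uniqueness theorem (the operator $i\partial_{t}+\partial_{x}^{2}$ has constant analytic coefficients) or, more explicitly, by expanding $\phi_{0}(x) = \sum_{k \in \mathbb{Z}} \hat{\phi}_{0}(k)e^{ikx}$ and observing that $\{e^{-ik^{2}t}\}_{k\in\mathbb{Z}}$ forms a Riesz sequence on every interval of positive length, since the spectral gaps $(k+1)^{2} - k^{2} = 2k+1$ tend to infinity and an Ingham-type inequality applies. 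Either route produces $\phi_{0} \equiv 0$.

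It remains to upgrade the weak convergence $\phi_{0,n} \rightharpoonup 0$ to strong convergence in $L^{2}(\mathbb{T})$, which is the step I expect to be the main technical obstacle. I would handle it by a propagation-of-compactness argument in the spirit of Proposition \ref{Prop-Comp}: the microlocal defect measure $\nu$ of $\{\phi_{n}\}$ in $L^{2}_{\mathrm{loc}}((0,T) \times \mathbb{T})$ is transported by the Hamiltonian flow of the principal symbol $\tau + \xi^{2}$, whose bicharacteristics are the straight lines $t \mapsto (x_{0} + 2\xi_{0} t, \xi_{0})$. The strong convergence $a\varphi\phi_{n} \to 0$ in $L^{2}$ annihilates $\nu$ on the open set $\omega \times (T/3,2T/3)$, and since on the one-dimensional torus every non-trivial bicharacteristic enters this set within the time window (the geometric control condition being automatic here), the invariance of $\nu$ forces $\nu \equiv 0$. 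Hence $\phi_{n} \to 0$ strongly in $L^{2}_{\mathrm{loc}}((0,T) \times \mathbb{T})$, and by conservation of the $L^{2}(\mathbb{T})$ norm along the Schrödinger group one obtains $\|\phi_{0,n}\|_{L^{2}(\mathbb{T})} \to 0$, contradicting the normalization and proving \eqref{control-LS}.
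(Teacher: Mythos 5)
The paper does not actually prove this proposition: it is quoted as a classical fact with references to \cite{Lebeau} and \cite{Machtyngier}, so there is no in-paper argument to compare against line by line. Your compactness--uniqueness scheme is the standard route to such observability inequalities and is essentially sound: weak lower semicontinuity of $\psi\mapsto\int_0^T\|a\varphi\psi\|^2\,dt$ gives $a\varphi\phi\equiv 0$ in the limit, unique continuation kills the weak limit, and propagation of compactness upgrades to strong convergence, which contradicts $\|\phi_{0,n}\|_{L^2}=1$ via conservation of the $L^2$ norm. Two details deserve tightening. First, $\{e^{-ik^2t}\}_{k\in\mathbb{Z}}$ is not literally a Riesz sequence, since $k$ and $-k$ produce the same frequency; you must group conjugate modes, obtaining $\hat\phi_0(m)e^{imx}+\hat\phi_0(-m)e^{-imx}=0$ for $x\in\omega$ and then use linear independence of $e^{\pm imx}$ on an open set to conclude $\hat\phi_0(\pm m)=0$. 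Second, the assertion that ``every non-trivial bicharacteristic enters $\omega\times(T/3,2T/3)$'' fails for the stationary rays with $\xi_0=0$, which never move; the correct statement is that the microlocal defect measure lives on the cosphere at infinity (where $\xi_0=0$ is excluded) and the low-frequency part of $\phi_n$ is handled separately by Rellich, exactly as in the commutator argument of Proposition \ref{Prop-Comp}. It is also worth noting that on the one-dimensional torus one can bypass compactness--uniqueness entirely: an Ingham-type inequality applied to $\phi(t,x)=\sum_k\hat\phi_0(k)e^{ikx-ik^2t}$ on $\omega\times(T/3,2T/3)$ yields \eqref{control-LS} directly and with an explicit constant, which is the more economical proof in this specific geometry.
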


Additionally, an observability inequality for the single KdV equation can be verified and we cite \cite{Russel93} for more details. 

\begin{proposition}
Let $T> 0$ be given. There exists $C = C(T) > 0$ such that, 
\begin{eqnarray}\label{control-KdV}
\|\psi_{0}\|_{L^{2}_{0}(\mathbb{T})}^{2} 
\leq C\int_{0}^{T}{\|Ge^{-t(\partial_{x}^{3} + \mu \partial_{x})}\psi_{0}\|_{L^{2}_{0}(\mathbb{T})}^{2}}dt.
\end{eqnarray}
\end{proposition}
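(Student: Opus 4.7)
The plan is to argue by contradiction via a compactness--uniqueness scheme analogous to the one used in the proof of Theorem \ref{est-teo_int}. Assume \eqref{control-KdV} fails, producing a sequence $\{\psi_{0,n}\}\subset L^2_0(\mathbb{T})$ with $\|\psi_{0,n}\|_{L^2_0}=1$ such that
$$\int_0^T\|Gv_n(t)\|_{L^2_0(\mathbb{T})}^2\,dt\longrightarrow 0,\qquad v_n(t):=e^{-t(\partial_x^3+\mu\partial_x)}\psi_{0,n}.$$
By Proposition \ref{Est-Lin}(ii) the sequence $\{v_n\}$ is bounded in $Y^{0,1/2}_T\cap C([0,T];L^2_0(\mathbb{T}))$, so up to a subsequence $\psi_{0,n}\rightharpoonup\psi_0$ weakly in $L^2_0(\mathbb{T})$ and $v_n\rightharpoonup v:=e^{-t(\partial_x^3+\mu\partial_x)}\psi_0$ weakly in $Y^{0,1/2}_T$. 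Since $G$ is bounded and linear, $Gv_n\rightharpoonup Gv$ weakly in $L^2(0,T;L^2_0)$, and together with the assumed strong convergence $Gv_n\to 0$ this forces $Gv\equiv 0$ on $\mathbb{T}\times(0,T)$.

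From the explicit expression $Gv(x,t)=g(x)\bigl(v(x,t)-\int_{\mathbb{T}}g(y)v(y,t)\,dy\bigr)$, the identity $Gv\equiv 0$ yields $v(x,t)=c(t)$ on $\omega=\{g>0\}$; substituting into the linear KdV equation and using $\partial_x v=\partial_x^3 v=0$ on $\omega$ forces $c'(t)\equiv 0$, so $c$ is a constant. The unique continuation property for the linear KdV equation (a consequence of the Carleman estimate developed in Appendix \ref{Appendix2}) then propagates $v\equiv c$ to all of $\mathbb{T}\times(0,T)$, and the zero-mean constraint $v(\cdot,t)\in L^2_0(\mathbb{T})$ forces $c=0$. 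Hence $\psi_0=0$.

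It remains to upgrade $v_n\rightharpoonup 0$ to a strong convergence contradicting $\|\psi_{0,n}\|_{L^2_0}=1$. For this I would first note that the scalar $\alpha_n(t):=\int_{\mathbb{T}}g(y)v_n(y,t)\,dy$ satisfies $\alpha_n'(t)=\int_{\mathbb{T}}(g'''+\mu g')v_n\,dy$ after integration by parts in the KdV equation, so $\alpha_n$ is bounded in $W^{1,\infty}(0,T)$ thanks to the conserved $L^2$-norm of $v_n$; by Arzel\`a--Ascoli, $\alpha_n\to 0$ strongly in $C([0,T])$. Combining this with $Gv_n\to 0$ in $L^2(0,T;L^2(\mathbb{T}))$ and the identity $g\,v_n=Gv_n+g\,\alpha_n$ produces strong convergence $g\,v_n\to 0$ in $L^2(0,T;L^2(\mathbb{T}))$, and since $g\geq\eta>0$ on any open $\omega'$ with $\overline{\omega'}\subset\omega$, this gives strong $L^2(0,T;L^2(\omega'))$-convergence of $v_n$ to zero. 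The propagation of compactness result of Appendix \ref{Appendix1} (Proposition \ref{Prop-Comp}) then upgrades this to $v_n\to 0$ strongly in $L^2_{loc}(0,T;L^2(\mathbb{T}))$, and choosing any $t_0\in(0,T)$ at which $v_n(t_0)\to 0$ in $L^2$ contradicts the conservation identity $\|v_n(t_0)\|_{L^2_0}=\|\psi_{0,n}\|_{L^2_0}=1$ of the linear KdV flow. The main obstacle I anticipate is precisely this weak-to-strong transfer on $\omega$: the propagation machinery requires genuine strong $L^2$-convergence on an open subset of $\mathbb{T}$, and the Arzel\`a--Ascoli argument for the mean-term $\alpha_n$, made possible by the extra time-regularity coming from the KdV equation itself, is what bridges the gap.
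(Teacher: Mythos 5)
Your argument is essentially correct, but it takes a genuinely different route from the paper: the paper offers no proof of this proposition and simply cites \cite{Russel93}, where the observability inequality \eqref{control-KdV} is obtained by expanding $e^{-t(\partial_x^3+\mu\partial_x)}\psi_0$ in Fourier series and exploiting the asymptotic separation of the frequencies $k^3-\mu k$ via Ingham-type inequalities for nonharmonic Fourier series (the moment method). That route is elementary and constructive: it works for every $T>0$ and yields an explicit observability constant. Your compactness--uniqueness scheme is softer --- it produces no constant and leans on the heavier machinery of propagation of compactness and unique continuation --- but it is internally consistent with the toolkit the paper develops for the nonlinear problem, and the step you single out as the crux, namely the control of the mean term $\alpha_n(t)=\int_{\mathbb{T}}g(y)v_n(y,t)\,dy$ needed to convert $Gv_n\to0$ into genuine strong convergence of $v_n$ on a subset of $\omega$, is a real issue and is handled correctly (the paper treats the analogous term in its proof of the nonlinear observability inequality by a Rellich-type compactness argument in time, which is the same idea as your Arzel\`a--Ascoli step).

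Two caveats, neither fatal but both worth a sentence. First, Proposition \ref{Prop-Comp} as stated is for the coupled system; to invoke it with $u_n\equiv0$ you would need $f_n=-i\partial_x v_n\to0$ strongly in $X^{-1,-1/2}_T$, which you do not have. What you actually need is the single-equation propagation of compactness for the Airy operator $L_2=\partial_t+\partial_x^3+\mu\partial_x$, which is exactly the second half of the proof of Proposition \ref{Prop-Comp} with the coupling term removed and is established in \cite{Laurent}; you should cite that version rather than the coupled statement. Second, the unique continuation step needs care: the Carleman estimate of Appendix \ref{Appendix2} is stated for data in $L^2_0$, whereas $w=v-c$ has mean $-c$, so you should either note that the zero-mean hypothesis is inessential there or argue directly on the Fourier side for the free evolution (which is, again, precisely what \cite{Russel93} does, at which point the whole compactness--uniqueness scaffolding becomes unnecessary).
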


As a consequence of the previous observability inequality, using the HUM method introduced by Lions \cite{Lions}, the exact controllability in $L^2(\mathbb{T})\times L^2_0(\mathbb{T})$ for the linear system \eqref{prob-contr_lin} holds. Indeed, just observe that combining both observability estimates \eqref{control-LS} and \eqref{control-KdV}, we get
$$
\|(\phi_{0}, \psi_{0})\|_{L^{2}(\mathbb{T}) \times L^{2}_{0}(\mathbb{T})}^{2} \leq C\left( \int_{0}^{T}{\|a(\cdot)\varphi(t)u(t) \|_{L^{2}(\mathbb{T})}^{2}}dt + \int_{0}^{T}{\|Ge^{-t(\partial_{x}^{3} + \mu \partial_{x})}\psi(t)\|_{L^{2}_{0}(\mathbb{T})}^{2}}dt\right),
$$
which means that, the linear system \eqref{prob-contr_lin} is null controllable, that is, the map
\begin{eqnarray*}
S: L^{2}(\mathbb{T}) \times L^{2}_{0}(\mathbb{T}) &\to& L^{2}(\mathbb{T}) \times L^{2}_{0}(\mathbb{T})\\ 
S(\varphi_{0}, \psi_{0}) &=& (u(0), v(0)),
\end{eqnarray*}
where $(u(0), v(0))$ is the initial data associated to 
\eqref{prob-contr_lin} with $f(x,t)$ and $h(x,t)$ defined by \eqref{fg}, is an isomorphism and the following linear result is verified. 

\begin{theorem}\label{local}
Let $\omega \subset \mathbb{T}$ be a nonempty open set and $T> 0$. Then  for every 
$(u_{0}, v_{0}), (u_{1}, v_{1}) \in L^{2}(\mathbb{T}) \times L^{2}_{0}(\mathbb{T})$ %with $[v_{0}] = [v_{1}]$ 
one can find two control inputs $f \in C([0, T]; L^{2}(\mathbb{T}))$ and $h\in C([0, T]; L^{2}_{0}(\mathbb{T}))$, such that, the unique solution $(u, v) \in C([0, T]; L^{2}(\mathbb{T}) \times L^{2}_{0}(\mathbb{T}))$ of \eqref{prob-contr_lin} satisfies $(u,v)(x,T) = (u_{1}(x), v_{1}(x))$.
\end{theorem}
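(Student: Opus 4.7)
\medskip

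\noindent\textbf{Proof proposal for Theorem \ref{local}.} The crucial observation is that the linear system \eqref{prob-contr_lin} is \emph{decoupled}: the Schr\"odinger equation involves only $f$ and the KdV equation involves only $h$. Consequently the problem factors into two independent scalar control problems, and the Hilbert Uniqueness Method (HUM) of Lions can be applied separately in each component, using \eqref{control-LS} for NLS and \eqref{control-KdV} for KdV. Since both unperturbed semigroups $U(t)=e^{it\partial_{x}^{2}}$ and $V(t)=e^{-t(\partial_{x}^{3}+\mu\partial_{x})}$ are unitary (hence invertible) groups on $L^{2}(\mathbb{T})$, it is enough to prove null controllability: the general exact controllability statement then follows by treating $(\tilde u_{0},\tilde v_{0}):=(u_{0}-U(T)^{-1}u_{1},\, v_{0}-V(T)^{-1}v_{1})$ and using linearity.

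The plan is, first, to build the control $f$ for the NLS component. Define the HUM operator $\Lambda_{\mathrm{NLS}}\colon L^{2}(\mathbb{T})\to L^{2}(\mathbb{T})$ by $\Lambda_{\mathrm{NLS}}\phi_{0}=-i\,U(-T)w(T)$ for $\phi_{0}\in L^{2}(\mathbb{T})$, where $w$ is the solution of
\begin{equation*}
i\partial_{t}w+\partial_{x}^{2}w=a(x)^{2}\varphi(t)^{2}\,U(t)\phi_{0},\qquad w(0)=0.
\end{equation*}
A direct duality computation (multiplying by $U(t)\phi_{0}$ and integrating) yields
\begin{equation*}
\langle \Lambda_{\mathrm{NLS}}\phi_{0},\phi_{0}\rangle_{L^{2}(\mathbb{T})}=\int_{0}^{T}\|a(\cdot)\varphi(t)\,U(t)\phi_{0}\|_{L^{2}(\mathbb{T})}^{2}\,dt,
\end{equation*}
which by \eqref{control-LS} is bounded below by $C^{-1}\|\phi_{0}\|_{L^{2}(\mathbb{T})}^{2}$. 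Hence $\Lambda_{\mathrm{NLS}}$ is coercive and self-adjoint; by Lax--Milgram it is an isomorphism of $L^{2}(\mathbb{T})$, and taking $\phi_{0}=\Lambda_{\mathrm{NLS}}^{-1}(u_{0})$ and $f=a^{2}\varphi^{2}\,U(t)\phi_{0}$ produces the desired $L^{2}$ control with $u(T)=0$.

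The construction of $h$ is entirely analogous. Define $\Lambda_{\mathrm{KdV}}\colon L^{2}_{0}(\mathbb{T})\to L^{2}_{0}(\mathbb{T})$ on the zero-mean subspace (preserved by $GG^{*}$ and by $V(t)$) by $\Lambda_{\mathrm{KdV}}\psi_{0}=V(-T)z(T)$, where $z$ solves
\begin{equation*}
\partial_{t}z+\partial_{x}^{3}z+\mu\partial_{x}z=GG^{*}\bigl(V(t)\psi_{0}\bigr),\qquad z(0)=0.
\end{equation*}
Dualizing against $V(t)\psi_{0}$ yields
\begin{equation*}
\langle \Lambda_{\mathrm{KdV}}\psi_{0},\psi_{0}\rangle_{L^{2}_{0}(\mathbb{T})}=\int_{0}^{T}\|G\,V(t)\psi_{0}\|_{L^{2}_{0}(\mathbb{T})}^{2}\,dt\ \gtrsim\ \|\psi_{0}\|_{L^{2}_{0}(\mathbb{T})}^{2}
\end{equation*}
by \eqref{control-KdV}, so $\Lambda_{\mathrm{KdV}}$ is invertible and setting $h_{2}=\Lambda_{\mathrm{KdV}}^{-1}(v_{0})$, $h=G^{*}V(t)h_{2}$ drives the KdV component from $v_{0}$ to $0$ in time $T$. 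Regularity $f,h\in C([0,T];L^{2}(\mathbb{T}))$ follows from the smoothness of $a,\varphi,g$ together with the continuity of $t\mapsto U(t)\phi_{0}$, $t\mapsto V(t)\psi_{0}$ in $L^{2}$, and the corresponding $C([0,T];L^{2})$ regularity of $(u,v)$ is then standard from Duhamel with $L^{2}$-in-time right-hand sides. Combining the two controls, the linearity and decoupling deliver null controllability for \eqref{prob-contr_lin}; the reversibility step sketched above promotes this to the exact controllability stated in the theorem.

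There is no real obstacle here beyond bookkeeping; the only minor point to verify is that $GG^{*}$ preserves $L^{2}_{0}(\mathbb{T})$ (which is immediate from the definition of $G$ in \eqref{Def-G}) so that $\Lambda_{\mathrm{KdV}}$ is well defined on the zero-mean subspace — this is why the KdV observability on $L^{2}_{0}(\mathbb{T})$ rather than $L^{2}(\mathbb{T})$ is the natural setting.
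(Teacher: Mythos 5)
Your proposal is correct and takes essentially the same route as the paper: both rest on the observability inequalities \eqref{control-LS} and \eqref{control-KdV} together with the HUM duality of Lions, the paper simply combining the two inequalities into a single block-diagonal Gramian where you treat the two decoupled components separately. Only trivial bookkeeping needs adjusting (the factor $-i$ in your definition of $\Lambda_{\mathrm{NLS}}$ and the sign relating $\phi_{0}$, $\psi_{0}$ to $u_{0}$, $v_{0}$ depend on the inner-product convention and on whether the controlled trajectory starts from the data or from zero); the coercivity argument, the reduction of exact to null controllability via the unitarity of the two groups, and the continuity of the controls are exactly the standard HUM scheme the paper invokes.
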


Now, we are in a position to prove the Theorem \ref{exc_local}.

 \begin{proof}[Proof of Theorem \ref{exc_local}]
For $T> 0$ to be determined, we consider the systems 
\begin{equation*}
\left\{\begin{array}{ll}
i\partial_{t}\phi + \partial_{x}^{2}\phi = 0,  & (x,t)\in \mathbb{T} \times (0,T),\\
\partial_{t}\psi + \partial_{x}^{3}\psi + \mu\partial_{x}\psi = 0,  & (x,t)\in \mathbb{T} \times (0,T),\\
\phi(x,0) = \phi_{0}(x),\  \psi(x,0) = \psi_{0}(x),  & x\in \mathbb{T}
\end{array}\right.
\end{equation*}
 and 
\begin{equation*}
\left\{\begin{array}{ll}
 i\partial_{t}u + \partial_{x}^{2}u = i\partial_{x}v + \beta|u|^{2}u + f ,  & (x,t)\in \mathbb{T} \times (0,T),\\
\partial_{t}v + \partial_{x}^{3}v + \mu\partial_{x}v + \frac{1}{2}\partial_{x}(v^{2}) = \operatorname{Re}(\partial_{x}u) + Gh,  & (x,t)\in \mathbb{T} \times (0,T),\\
(u(\cdot, T), v(\cdot, T)) = (0, 0),  & x\in \mathbb{T},
\end{array}\right.
\end{equation*}
with $f:=a^{2}\varphi^{2}\phi$ and $h:=G^{*}\psi=G\psi$.     Let us define the operator 
 \begin{eqnarray*}
 L: L^{2}(\mathbb{T})\times L^{2}_{0}(\mathbb{T})) &\to& L^{2}(\mathbb{T})\times L^{2}_{0}(\mathbb{T}) \\
 (\phi_{0}, \psi_{0}) &\mapsto& L(\phi_{0}, \psi_{0}) = (u|_{t = 0}, v|_{t = 0}) = (u_{0}, v_{0})
 \end{eqnarray*}
 We split $(u,v)$ into
 $$(u, v) = (u_{L}, v_{L}) + (u_{NL}, v_{NL}) $$
where $(u_{L}, v_{L})$ is solution of 
\begin{equation*}
\left\{\begin{array}{ll}
 i\partial_{t}u_{L} + \partial_{x}^{2}u_{L} = a^{2}\varphi^{2}\phi,  & (x,t)\in \mathbb{T} \times (0,T),\\
\partial_{t}v_{L} + \partial_{x}^{3}v_{L} + \mu \partial_{x}v_{L} = GG^{*}\psi,  & (x,t)\in \mathbb{T} \times (0,T),\\
(u_{L}, v_{L})(\cdot,T) = (0, 0),& x\in \mathbb{T}.
\end{array}\right.
\end{equation*}
and $(u_{NL}, v_{NL})$ is solution of 
\begin{equation*}
\left\{\begin{array}{ll}
 i\partial_{t}u_{NL} + \partial_{x}^{2}u_{NL} = i\partial_{x}v + \beta|u|^{2}u,  & (x,t)\in \mathbb{T} \times (0,T),\\
 \partial_{t}v_{NL} + \partial_{x}^{3}v_{NL} + \mu \partial_{x}v_{NL} = \operatorname{Re}(\partial_{x}u) - \frac{1}{2}\partial_{x}(v^{2}),  & (x,t)\in \mathbb{T} \times (0,T),\\
(u_{NL}, v_{NL})(\cdot,T) = (0,0),& x\in \mathbb{T}.
\end{array}\right.
\end{equation*}

Observe that we can follow \cite{dehman-gerard-lebeau} to construct an isomorphism  $$S: L^2(\mathbb{T})\times L^{2}_{0}(\mathbb{T})\to L^2(\mathbb{T})\times L^{2}_{0}(\mathbb{T})$$ such that $(u_{L}(0), v_{L}(0)) = S_{L}(\phi_{0}, \psi_{0}).$ Additionally, we can also construct another application  (see e.g. \cite[Lemma 2.4]{Laurent}) $$K: L^2(\mathbb{T})\times L^{2}_{0}(\mathbb{T})\to L^2(\mathbb{T})\times L^{2}_{0}(\mathbb{T})$$ which satisfies $K(\phi_{0}, \psi_{0}) = (u_{NL}(0), v_{NL}(0))$. With these information in hand, we have that if $(u, v), (u_{L}, v_{L}), (u_{NL}, v_{NL}) \in \widetilde{X}_{T}^{0} \times \widetilde{Y}_{T}^{0}$ and 
\begin{eqnarray*}
(u,v)(0) = (u_{L}, v_{L})(0) + (u_{NL}, v_{NL})(0),
\end{eqnarray*}
then we can rewrite the previous equality as follows 
\begin{eqnarray*}
L(\phi_{0}, \psi_{0}) = S_{L}(\phi_{0}, \psi_{0}) + K(\phi_{0}, \psi_{0}),
\end{eqnarray*}
where $K(\phi_{0}, \psi_{0}) = (u_{NL}(0), v_{NL}(0))$.  So, we have that $L(\phi_{0}, \psi_{0}) = (u_{0}, v_{0})$ is equivalent to 
\begin{eqnarray*}
(\phi_{0}, \psi_{0}) = S_{L}^{-1}(u_{0}, v_{0}) - S_{L}^{-1}K(\phi_{0}, \psi_{0}).
\end{eqnarray*}
Thus, let us define the following map
$$
B: L^{2}(\mathbb{T}) \times L^{2}_{0}(\mathbb{T}) \to L^{2}(\mathbb{T}) \times L^{2}_{0}(\mathbb{T})$$
by 
$$B(\phi_{0}, \psi_{0}) = S_{L}^{-1}(u_{0}, v_{0}) - S_{L}^{-1}K(\phi_{0}, \psi_{0}). $$
Therefore, our null controllability problem is reduced to prove that $B$ has a fixed point, so let us prove it now.  

From now on we may fix $T < 1$. Since $S_{L}$ is an isomorphism of 
$L^{2}(\mathbb{T}) \times L^{2}_{0}(\mathbb{T})$, we have,
\begin{eqnarray*}
\|B(\phi_{0}, \psi_{0}) \|_{L^{2}(\mathbb{T}) \times L^{2}_{0}(\mathbb{T})} \leq C\left(\|K(\phi_{0}, \psi_{0})\|_{L^{2}(\mathbb{T}) \times L^{2}_{0}(\mathbb{T})}  + \|(u_{0}, v_{0})\|_{L^{2}(\mathbb{T}) \times L^{2}_{0}(\mathbb{T})} \right).
\end{eqnarray*}
We are interested to estimate $\|K(\phi_{0}, \psi_{0})\|_{L^{2}(\mathbb{T}) \times L^{2}_{0}(\mathbb{T})} = \|(u_{NL}(0), v_{NL}(0))\|_{L^{2}(\mathbb{T}) \times L^{2}_{0}(\mathbb{T})}$. To make it we use the Duhamel formula and the  $X_{T}^{0, \frac{1}{2}} \times Y_{T}^{0, \frac{1}{2}}$--estimates, to conclude that
\begin{equation*}
\begin{split}
\|(u_{NL}(0), v_{NL}(0))\|_{L^{2}(\mathbb{T}) \times L^{2}_{0}(\mathbb{T})} \leq& \|(u_{NL}, v_{NL})\|_{\widetilde{X}_{T}^{0} \times \widetilde{Y}_{T}^{0}} \\
 \leq &\ C\| |u|^{2}u\|_{X_{T}^{0, - \frac{1}{2}}} + C\|\partial_{x}v\|_{X_{T}^{0, -\frac{1}{2}}} \\&+ C\|\operatorname{Re}(\partial_{x}u)\|_{{Y}_{T}^{0, - \frac{1}{2}}} + C \|\partial_{x}(v^{2})\|_{Y_{T}^{0, - \frac{1}{2}}} \\ 
\leq&\ CT^{0+}\|u\|_{X_{T}^{0, \frac{1}{2}}}^{3} + CT^{0+}\|u\|_{X_{T}^{0, \frac{1}{2}}} + CT^{0+}\|v\|_{Y^{0, \frac{1}{2}}_{T}} + CT^{0+}\|v\|_{Y_{T}^{0+}}^{2},
\end{split}
\end{equation*}
thanks to the estimates of Section \ref{Sec2}.
Now, we notice that, 
\begin{equation*}
\|(u, v)\|_{X_{T}^{0, \frac{1}{2}}} \leq C\|(\varphi^{2}a^{2}\phi, GG^{*}\psi)\|_{L^{2}(0, T; L^{2}(\mathbb{T})\times L^{2}_0(\mathbb{T}))} 
\leq\ C\|(\phi_{0}, \psi_{0})\|_{L^{2}(\mathbb{T})\times L^{2}_0(\mathbb{T})} < C\eta 
\end{equation*}
and then, 
$$
\|B(\phi_{0}, \psi_{0})\|_{L^{2}(\mathbb{T}) \times L^{2}_{0}(\mathbb{T})} \leq C\left(T^{0+}\eta + T^{0+}\eta^{2} + \|(u_{0}, v_{0})\|_{L^{2}(\mathbb{T}) \times L^{2}_{0}(\mathbb{T})} \right).
$$
Therefore, taking $\|(u_{0}, v_{0})\|_{L^{2}(\mathbb{T}) \times L^{2}_{0}(\mathbb{T})}$ small enough, we can conclude that $B$ maps the closed ball $B_{\eta}$ of $L^{2}(\mathbb{T})\times L^{2}(\mathbb{T})$ into itself. 
 
It remains to prove that $B$ is a contraction. We start by considering  the following two systems 
\begin{equation*}
\left\{\begin{array}{ll}
i\partial_{t}(u - \widetilde{u}) + \partial_{x}^{2}(u - \widetilde{u}) = i\partial_{x}(v - \widetilde{v}) + \beta\left(|u|^{2}u - |\widetilde{u}|^{2}\widetilde{u} \right) + 
a^{2}\varphi^{2}(\phi - \widetilde{\phi}) \\
\partial_{t}(v - \widetilde{v}) + \partial_{x}^{3}(v - \widetilde{v}) + \mu\partial_{x}(v - \widetilde{v}) + \frac{1}{2}\partial_{x}(v^{2} - \widetilde{v}^{2}) = \operatorname{Re}(\partial_{x}u - \partial_{x}\widetilde{u}) + GG^{*}(\psi - \widetilde{\psi}) \\
(u - \widetilde{u})(\cdot,T) = 0, \quad (v - \widetilde{v})(\cdot,T) = 0.
\end{array}\right.
\end{equation*}
and 
\begin{equation*}
\left\{\begin{array}{ll}
 i\partial_{t}(u_{NL} - \widetilde{u}_{NL}) 
+ \partial_{x}^{2}(u_{NL} - \widetilde{u}_{NL}) = \beta\left(|u|^{2}u - |\widetilde{u}|^{2}\widetilde{u} \right) + i\partial_{x}(v - \widetilde{v})\\
 \partial_{t}(v_{NL} - \widetilde{v}_{NL}) + \partial_{x}^{3}(v_{NL} - \widetilde{v}_{NL}) + \mu \partial_{x}(v_{NL} - \widetilde{v}_{NL}) = 
\operatorname{Re}(\partial_{x}u - \partial_{x}\widetilde{u}) + \frac{1}{2}\left(\partial_{x}(\widetilde{v}^{2} - v^{2}) \right)\\
 (u_{NL} - \widetilde{u}_{NL})(\cdot,T) = 0, \quad (v_{NL} - \widetilde{v}_{NL})(\cdot,T) = 0.
\end{array}\right.
\end{equation*}
Again, due to the estimates provided in Section \ref{Sec2}, we get 
\begin{equation*}
\begin{split}
\| B(\phi_{0}, \psi_{0}) - B(\widetilde{\phi_{0}}, \widetilde{\psi_{0}})\|_{L^{2}(\mathbb{T}) \times L^{2}_{0}(\mathbb{T})} \leq&\ \|((u_{NL} - \widetilde{u}_{NL})(0), (v_{NL} - \widetilde{v}_{NL})(0))\|_{L^{2}(\mathbb{T})\times L^{2}_{0}(\mathbb{T})}\\
\leq&\ C\| |u|^{2}u - |\widetilde{u}|^{2}\widetilde{u}\|_{X_{T}^{0, - \frac{1}{2}}} + C\|\partial_{x}v - \partial_{x}\widetilde{v}\|_{X_{T}^{0, - \frac{1}{2}}} \\&\ + C\|\partial_{x}(v^{2}) - \partial_{x}(\widetilde{v}^{2})\|_{Y_{T}^{0, - \frac{1}{2}}} + C\|\operatorname{Re}(\partial_{x}u - \partial_{x}\widetilde{u})\|_{Y_{T}^{0, - \frac{1}{2}}}  \\
\leq&\ CT^{0+}\left(1 + \|u\|_{X_{T}^{0, \frac{1}{2}}}^{2} + \|\widetilde{u}\|_{X_{T}^{0, \frac{1}{2}}}^{2} \right)\|u - \widetilde{u}\|_{X_{T}^{0,\frac{1}{2}}} \\
&\ + CT^{0+}\left(1 + \|v\|_{Y_{T}^{0, \frac{1}{2}}} + \|\widetilde{v}\|_{Y_{T}^{0, \frac{1}{2}}} \right)\|v - \widetilde{v}\|_{Y_{T}^{0, \frac{1}{2}}} \\
\leq& \ C\left(T^{0+}\eta^{2} + T^{0+}\right)\|u - \widetilde{u}\|_{X_{T}^{0, \frac{1}{2}}} \\
&\ + \left( T^{0+} + T^{0+}\eta\right)\|v - \widetilde{v}\|_{Y_{T}^{0, \frac{1}{2}}}.
\end{split}
\end{equation*}
Considering the system \eqref{prob-contr}, we deduce that 
\begin{equation}\label{r1}
\begin{split}
\|u - \widetilde{u}\|_{X_{T}^{0, \frac{1}{2}}} + \|v - \widetilde{v}\|_{Y_{T}^{0, \frac{1}{2}}}\leq&\ C\| |u|^{2}u - |\widetilde{u}|^{2}\widetilde{u}\|_{X_{T}^{0, \frac{1}{2}}} + C\|\partial_{x}v - \partial_{x}\widetilde{v}\|_{X_{T}^{0, \frac{1}{2}}} \\&
+ C\|\partial_{x}(v^{2}) - \partial_{x}(\widetilde{v}^{2}) \|_{Y_{T}^{0, \frac{1}{2}}} + \|\operatorname{Re}(\partial_{x}u) - \operatorname{Re}(\partial_{x}\widetilde{u})\|_{Y_{T}^{0, \frac{1}{2}}} \\ & + C\|\varphi^{2}a^{2}(\phi - \widetilde{\phi})\|_{L^{2}(\mathbb{T})} + \|GG^{*}(\psi - \widetilde{\psi})\|_{L^{2}_{0}(\mathbb{T})} \\
\leq& \ C(T^{0+}\eta^{2} + T^{0+})\|u - \widetilde{u}\|_{X_{T}^{0, \frac{1}{2}}} + 
(T^{0+} + T^{0+}\eta^{2})\|v - \widetilde{v}\|_{Y_{T}^{0, \frac{1}{2}}} \\&+ C\|\phi_{0} - 
\widetilde{\phi}_{0}\|_{L^{2}(\mathbb{T})} + C\|\psi_{0} - \widetilde{\psi}_{0}\|_{L^{2}_{0}(\mathbb{T})}.
\end{split}
\end{equation}
Then, for $\eta < 1$ fixed, we can get a small $T > 0$ such that,
\begin{equation}\label{r2}
\|u - \widetilde{u}\|_{X_{T}^{0, \frac{1}{2}}} + \|v - \widetilde{v}\|_{Y_{T}^{0, \frac{1}{2}}}\leq C\|\phi_{0} - 
\widetilde{\phi}_{0}\|_{L^{2}(\mathbb{T})} + C\|\psi_{0} - \widetilde{\psi}_{0}\|_{L^{2}_{0}(\mathbb{T})}.
\end{equation}
Thanks to the inequalities \eqref{r1} and \eqref{r2}, we get that
$$
\|B(\phi_{0}, \psi_{0}) - B(\widetilde{\phi}_{0}, \widetilde{\psi}_{0})\|_{L^{2}(\mathbb{T}) \times L^{2}_{0}(\mathbb{T})} 
\leq C\left(T^{0+} + \eta T^{0+} \right)\|(\phi_{0}, \psi_{0}) - (\widetilde{\phi}_{0}, \widetilde{\psi}_{0})\|_{L^{2} (\mathbb{T})\times L^{2}_{0}(\mathbb{T})}.
$$
To finish, just choosing  $$T^{0+} + \eta T^{0+}<\frac{1}{2C},$$ we conclude that $B$ is a contraction in the closed ball of $L^{2}(\mathbb{T}) \times L^{2}_{0}(\mathbb{T})$ and the local controllability result is verified and Theorem \ref{exc_local} is achieved. 
 \end{proof}

\section{Further comments and open issues}\label{Sec6}
In this work, we showed the stabilization and control issues of the system  
\begin{equation}\label{prob-contr-intra}
\left\{\begin{array}{ll}
i \partial_{t} u+\partial_{x}^{2} u = i \partial_{x} v+\beta|u|^{2} u + f, & (x,t)\in\mathbb{T} \times (0,T),\\
\partial_{t} v+\partial_{x}^{3} v + \frac{1}{2} \partial_{x}\left(v^{2}\right) + \mu\partial_{x}v = \operatorname{Re}\left(\partial_{x} u\right) + g,  & (x,t)\in \mathbb{T} \times (0,T),\\
u(x, 0)=u_{0}(x), \quad v(x, 0)=v_{0}(x), & x \in \mathbb{T}.
\end{array}\right.
\end{equation} 
When we introduce two damping mechanisms $f:=-ia^2(x)u$ and $g:=-GG^*h$,  where $a$ satisfies \eqref{Def-a} and $G$ is defined by \eqref{Def-G}, we can prove that the solutions associated to \eqref{prob-contr-intra} decay exponentially for $t$ large enough. Moreover, together with local exact controllability, we are also able to prove a global exact controllability property, that means, for any initial and final data, without restrictions on its norms, we can find two control inputs $ f $ and $ g $ such that we can drive the initial data to the final data by using these controls. 

The strategy employed in the proof of the main results in this manuscript uses the contradiction argument, which aims to prove an observability inequality. Observability inequality follows by a combination of propagation of compactness and regularity, together with a unique continuation property for the operator associated with the NLS--KdV system, which is proved using the defect measure.

\subsection{Only one control acting} Control problems of coupled dispersive systems are still not well understood and most of the results consider single equations. In our case, we are able to combine two dispersive equations with nonlinearities and principal symbols of a different order. One of the main difficulties in our case arises in the proof of the propagation results,  and was overcome using the smoothing properties of the solutions in Bourgain space.

This method presented here seems to work well however it has a drawback. When we decided to study the nonlinear problem directly, instead of first the linear problem, we were unable to remove controls, that is, with this method, the best possible result is to show the control results with a control in each equation. 

It is important to mention that in \cite{Arauna}, the authors proved that the following linear Schr\"odinger--KdV system
\begin{equation}\label{oneCon}
\left\{\begin{array}{lll}
i \partial_tw+\partial^2_xw=a_{1} w+a_{2} y+h \mathbf{1}_{\omega} & \text { in } & Q \\
\partial_ty+\partial^3_xy+\partial_x(M y)=\operatorname{Re}\left(a_{3} w\right)+a_{4} y  + l \mathbf{1}_{\omega}& \text { in } & Q \\
w(0, t)=w(1, t)=0 & \text { in } & (0, T), \\
y(0, t)=y(1, t)=\partial_xy(1, t)=0 & \text { in } & (0, T), \\
w(x, 0)=w_{0}(x), \quad y(x, 0)=y_{0}(x) & \text { in } & (0,1),
\end{array}\right.
\end{equation}
in a bounded domain $Q:=(0,1)\times(0,T)$, is controllable.  They are not able to produce any nonlinear results due to the lack of regularity to treat the nonlinear problem in Sobolev spaces,  in this way, our work extends, for the nonlinear system, the control problems for a more general system. The main point of dealing with nonlinear structure in this manuscript is to work in the Bourgain spaces, which helps us to treat the nonlinearities appropriately.  However,  we mention that the linear case helps to deal with the system \eqref{oneCon} with only a control $h\mathbf{1}_{\omega}$.  This was possible by using a Carleman estimate which involves both operators, Schrödinger, and KdV operators. However,  it is important to make it clear that in our case we can not use this inequality to remove one control, since the regularity used in \cite[Theorem 1.3.]{Arauna} is not enough to deal with the nonlinearities involved in the system \eqref{prob-contr-intra}. 

\subsection{Others nonlinearities} Another interesting problem is to consider the full physical model of the NLS--KdV system that appears in \cite{Arbieto},
\begin{equation}\label{Arb}
\left\{\begin{array}{ll}
i \partial_{t} u+\partial_{x}^{2} u=\alpha u v+\beta|u|^{2} u+ f, & (x,t)\in\mathbb{T} \times (0,T),\\
\partial_{t} v+\partial_{x}^{3} v+\frac{1}{2} \partial_{x}\left(v^{2}\right)=\gamma \partial_{x}\left(|u|^{2}\right) + g, & (x,t)\in\mathbb{T} \times (0,T),\\
u(x, 0)=u_{0}(x), \quad v(x, 0)=v_{0}(x), & x \in \mathbb{T},
\end{array}\right.
\end{equation} 
to prove controllability results. In this way, we propose the following natural issue.

\vspace{0.2cm}

\noindent\textbf{Question $\mathcal{A}$:} Is it possible to prove control results for the system \eqref{Arb}?

\vspace{0.2cm}

Note that, in this case, to reach the result we need to study some conservation laws associated with the solution of the system. In \cite[Lemma 5.1.]{Arbieto} the authors proved that the evolution system \eqref{Arb} preserves the following quantities
$$
M(t):=\int_{\mathbb{T}}|u(t)|^{2} d x, \quad \quad Q(t):=\int_{\mathbb{T}}\left\{\alpha v(t)^{2}+2 \gamma \Im\left(u(t) \overline{\partial_{x} u(t)}\right)\right\} d x$$
and 
$$
E(t):=\int_{\mathbb{T}}\left\{\alpha \gamma v(t)|u(t)|^{2}-\frac{\alpha}{6} v(t)^{3}+\frac{\beta \gamma}{2}|u(t)|^{4}+\frac{\alpha}{2}\left|\partial_{x} v(t)\right|^{2}+\gamma\left|\partial_{x} u(t)\right|^{2}\right\} d x.
$$
These quantities give us only information about the ``mass" $M(t)$ associated with the Schrödinger equation and there is no information about this conversation for the part of the KdV equation. So, it is necessary, before answering the Question $\mathcal{A}$, to solve the following problem.

\vspace{0.2cm}

\noindent\textbf{Question $\mathcal{B}$:}  \textit{How to deal with the KdV part presented in \eqref{Arb} to present some information for the ``mass"? }

\vspace{0.2cm}

This problem is an open issue and it seems that our work opens, from now on, some possibilities to deal with these types of systems. 

 \appendix
 \section{Propagation results in Bourgain spaces}\label{Appendix1}
This appendix is dedicated to presenting properties of propagation in Bourgain spaces for the linear differential operator associated with the NLS--KdV system.  These results of propagation are the key to proving the global control results, that some parts were borrowed from \cite{Laurent-esaim, Laurent}. For self-contentedness, we will also give a rigorous proof of them. The main ingredient is pseudo-differential analysis.  Let us begin with the result of the propagation of compactness which will ensure strong convergence in appropriate spaces.
 
\begin{proposition}
[\textit{Propagation of compactness}]\label{Prop-Comp}
Let $T > 0$, suppose that $(u_{n}, v_{n}) \in X^{0, \frac{1}{2}}_{T} \times Y_{T}^{0, \frac{1}{2}}$ and $(f_{n}, g_{n}) \in X_{T}^{- 1, - \frac{1}{2} } \times Y_{T}^{- 1, - \frac{1}{2}}$, $n \in \mathbb{N}$, satisfies
\begin{equation*}
\left\{\begin{array}{ll}
i \partial_{t} u_{n}+\partial_{x}^{2} u_{n}=i \partial_{x}v_{n} + f_{n},  & (x,t)\in \mathbb{T} \times (0,T),\\
\partial_{t} v_{n}+\partial_{x}^{3} v_{n} + \mu \partial_{x}v_{n}=R e\left(\partial_{x}u_{n}\right) + g_{n},  & (x,t)\in \mathbb{T} \times (0,T).
\end{array}\right.
\end{equation*}
Assume that there exists a constant $C > 0$ such that 
\begin{eqnarray} \label{bound-prop}
\|(u_{n}, v_{n})\|_{X_{T}^{0, \frac{1}{2}} \times Y_{T}^{0, \frac{1}{2}}} \leq C, \quad \text{ for } n \geq 1. 
\end{eqnarray}
Additionally, suppose that  
$$
(u_{n}, v_{n}) \to (0, 0) \quad \hbox{weakly in} \quad X^{0,  \frac{1}{2}}_{T} \times Y_{T}^{0, \frac{1}{2}},
$$
$$
(f_{n}, g_{n}) \to (0, 0) \quad \hbox{strongly in } \quad X^{-1, - \frac{1}{2}}_{T} \times Y_{T}^{-1 , -\frac{1}{2}}.
$$
and that for some nonempty set $\omega \subset \mathbb{T}$ we have 
\begin{eqnarray*}
(u_{n}, v_{n}) \to (0, 0) \text{ in } L^{2}(0, T; L^{2}(\omega) \times L^{2}_0(\omega)).
\end{eqnarray*}
Then, 
\begin{eqnarray}
(u_{n}, v_{n}) \to (0,0) \text{ strongly in } L^{2}_{loc}(0, T; L^{2}(\mathbb{T}) \times L^{2}_{0}(\mathbb{T})). 
\end{eqnarray}
\end{proposition}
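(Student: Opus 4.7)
The plan is to adapt the microlocal commutator approach of Dehman--G\'erard--Lebeau and Laurent, applied separately to the Schr\"odinger and KdV parts of the system, using Proposition \ref{Prop_Bou} as the bridge that absorbs the cross-coupling terms. Fix a cutoff $\chi\in C_0^\infty((0,T))$ and an open set $\omega_1\Subset\omega$. The strategy is to produce two commutator identities, one per equation, whose left-hand sides control $\int_0^T\chi(t)\bigl(\|u_n(t)\|_{L^2}^2+\|v_n(t)\|_{L^2_0}^2\bigr)dt$ modulo lower-order contributions, and whose right-hand sides split into three families of remainders --- time-cutoff terms, source terms, and coupling terms --- each of which must be shown to tend to zero as $n\to\infty$.

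For the Schr\"odinger part, build a scalar, self-adjoint, classical pseudodifferential operator $A$ on $\mathbb{T}$ of order $-1$ with principal symbol supported in $\mathbb{T}\setminus\omega_1$, arranged so that $i[A,\partial_x^2]$ is a nonnegative $\Psi$DO of order $0$ elliptic outside a small conic neighbourhood of the zero section; this is possible because the bicharacteristic flow of $-\xi^2$ on $T^*\mathbb{T}\setminus 0$ is horizontal, so positivity on $\omega_1$ can be transported across $\mathbb{T}$. Testing the first equation against $Au_n$ and integrating by parts in $t$ yields the identity
\begin{equation*}
\int_0^T\chi(t)\bigl(i[A,\partial_x^2]u_n,u_n\bigr)dt = -\int_0^T\chi'(t)(Au_n,u_n)dt + 2\int_0^T\chi(t)\operatorname{Im}(Af_n,u_n)dt + \mathcal{C}^{\mathrm{S}}_n,
\end{equation*}
with the coupling term $\mathcal{C}^{\mathrm{S}}_n := 2\int_0^T\chi(t)\operatorname{Re}(A\partial_x v_n,u_n)dt$. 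An analogous identity holds for the KdV equation with an operator $B$ of order $-2$ (so that $[B,\partial_x^3+\mu\partial_x]$ is of order $0$), producing a coupling term $\mathcal{C}^{\mathrm{K}}_n$ of the form $2\int_0^T\chi(t)(B\operatorname{Re}\partial_x u_n,v_n)dt$. A G\aa rding-type argument together with the strong convergence $(u_n,v_n)\to 0$ in $L^2(0,T;L^2(\omega)\times L^2_0(\omega))$ reduces the proposition to showing that every remainder on the right-hand side tends to zero.

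The routine remainders are straightforward: since $A$ and $B$ have strictly negative order they are compact on $L^2(\mathbb{T})$, so the weak convergence $(u_n,v_n)\rightharpoonup(0,0)$ in $L^2_{t,x}$ forces $\int\chi'(Au_n,u_n)dt\to 0$ and its KdV analogue to zero as well. The source terms vanish by the hypothesis $(f_n,g_n)\to 0$ in $X^{-1,-1/2}_T\times Y^{-1,-1/2}_T$ combined with the duality between these spaces and $X^{1,1/2}_T\times Y^{1,1/2}_T$ and the fact that $A$, $B$ compose well with the relevant Bourgain scales.

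The main obstacle is controlling the coupling remainders $\mathcal{C}^{\mathrm{S}}_n$ and $\mathcal{C}^{\mathrm{K}}_n$, as this is exactly where the mismatch between the two principal orders becomes critical and where the standard single-equation argument breaks down. For $\mathcal{C}^{\mathrm{S}}_n$ the plan is to recast $A\partial_x v_n$ via Proposition \ref{Prop_Bou}: because $A$ smooths by one $x$-derivative and $v_n$ is bounded in $Y^{0,1/2}_T$, the mixed estimate places $A\partial_x v_n$ uniformly in the Schr\"odinger companion space $Z^0_T$, while $u_n$ is bounded in $X^{0,1/2}_T$ which embeds compactly into $X^{0,1/2-\delta}_T$; the duality built into the definition of $Z^0_T$ then produces $\mathcal{C}^{\mathrm{S}}_n\to 0$ along a subsequence, and a symmetric argument using $W^0_T$ versus $Y^{0,1/2-\delta}_T$ disposes of $\mathcal{C}^{\mathrm{K}}_n$. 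Since $\chi\in C_0^\infty((0,T))$ and $\omega_1\Subset\omega$ were arbitrary, this upgrades to the strong convergence of $(u_n,v_n)$ to zero in $L^2_{\mathrm{loc}}(0,T;L^2(\mathbb{T})\times L^2_0(\mathbb{T}))$, completing the argument.
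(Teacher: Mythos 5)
Your overall architecture coincides with the paper's: a commutator argument with an operator of order $-1$ for the Schr\"odinger equation (the paper takes $A=\psi(t)\varphi(x)D^{-1}$, regularized by $e^{\epsilon\partial_x^2}$) and one of order $-2$ for the KdV equation, with the cutoff, source and coupling remainders treated separately, and positivity propagated around $\mathbb{T}$ (the paper does this by the elementary device of writing $\chi-\chi(\cdot-x_0)=\partial_x\varphi$ rather than by a G\aa rding argument, but on the one-dimensional torus these amount to the same thing). The source and cutoff terms are handled as you describe.

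The gap is in the one step you correctly single out as the crux, namely the coupling terms. You claim that $X^{0,1/2}_T$ embeds compactly into $X^{0,1/2-\delta}_T$; it does not. By the paper's item $(ii)$, compactness of $X^{k+\delta,b+\delta_1}_T\hookrightarrow X^{k,b}_T$ requires a loss in \emph{both} indices, and here there is no gain in the spatial index. Consequently $u_n\rightharpoonup 0$ does not upgrade to strong convergence in $X^{0,1/2-\delta}_T$, and pairing a merely bounded sequence $A\partial_x v_n$ in $Z^0_T$ against a merely weakly convergent $u_n$ yields boundedness of $\mathcal{C}^{\mathrm{S}}_n$, not convergence to zero. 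The paper resolves this by putting the compactness on the $v_n$ factor instead: Lemma \ref{LemaBou} splits the weight via $|n|\le C\langle\tau-n^3+\mu n\rangle^{1/2-}\langle\tau+n^2\rangle^{1/2-}$, giving
\begin{equation*}
|(i\partial_x v_n,A_\epsilon^{*}u_n)|\le C\,\|v_n\|_{Y^{-1,1/2-}_T}\,\|A_\epsilon^{*}u_n\|_{X^{1,1/2-}_T}\le C\,\|v_n\|_{Y^{-1,1/2-}_T}\,\|u_n\|_{X^{0,1/2-}_T},
\end{equation*}
where the order $-1$ of $A$ is spent to place $A_\epsilon^{*}u_n$ in $X^{1,1/2-}_T$; since $Y^{0,1/2}_T\hookrightarrow Y^{-1,1/2-}_T$ \emph{is} compact (loss in both indices), $\|v_n\|_{Y^{-1,1/2-}_T}\to0$ and the coupling term vanishes, with the analogous splitting for the KdV coupling term. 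Your argument could be repaired along the same lines (strong convergence of $u_n$ in $X^{-\delta',1/2-\delta}_T$ against uniform boundedness of $A\partial_x v_n$ in $X^{\delta',-1/2+\delta}_T$, which again requires the weight comparison of Lemma \ref{LemaBou}), but as written the decisive estimate does not close.
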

 
 \begin{proof}
We shall start by proving the following convergence 
\begin{eqnarray}
u_{n} \to 0 \text{ strongly in } L^{2}_{loc}(0, T; L^{2}(\mathbb{T})).
\end{eqnarray}
Denote $L_{1} := i\partial_{t} + \partial_{x}^{2}$ the linear 
Schrödinger operator and consider $\varphi \in C^{\infty}(\mathbb{T})$ and 
$\psi \in C_{0}^{\infty}(]0, T[)$ taking real values to be determined. Define $Bu := \varphi(x)D^{-1}$ and $A = \psi(t)B$ 
and for $\epsilon > 0$, $A_{\epsilon} = Ae^{\epsilon \partial_{x}^{2}} = \psi(t)B_{\epsilon}$, with $D^{-1}$ defined by \eqref{DR}. We consider,
\begin{equation*}
\alpha_{n,\epsilon} := \left([A_{\epsilon}, L_{1}]u_{n}, u_{n} \right)_{L^{2}(0, T; L^{2}(\mathbb{T}))}=  \left([A_{\epsilon}, \partial_{x}^{2}]u_{n}, u_{n} \right)- i\left(\psi'(t)Bu_{n}, u_{n}\right). 
\end{equation*}
On the other hand, we also have
\begin{equation}\label{prop-comp}
\alpha_{n, \epsilon} = \ (i\partial_{x}v_{n}, A_{\epsilon}^{*}u_{n}) - (A_{\epsilon}u_{n}, i\partial_{x}v_{n}) +  (f_{n}, A_{\epsilon}^{*}u_{n}) - (A_{\epsilon}u_{n}, f_{n}) .
\end{equation}

Now, observe that using Lemma \ref{LemaBou} we have
\begin{equation}\label{prop-comp1}
\begin{split}
|(i\partial_{x}v_{n}, A_{\epsilon}^{*}u_{n})_{L^{2}_{t,x}}| \leq & \sum_{m \in \mathbb{Z}}\int_{-\infty}^{\infty}{ {\left|\widehat{\partial_{x}v_{n}(m, \tau)} \right| \left|\overline{\widehat{A_{\epsilon}^{*}u_{n}}(n, \tau) } \right|} }d\tau \\ 
 \leq & \sum_{m \in \mathbb{Z}}\int_{-\infty}^{\infty}{{H(\tau, n)\langle \tau -n^{3} + \mu n \rangle^{b'} \langle n \rangle^{-1}|\widehat{v_{n}}(n, \tau)| \langle \tau + n^{2} \rangle^{b'}\langle n \rangle^{1}|\overline{\widehat{A^{*}_{\epsilon}u_{n}}(n, \tau)}|}}d\tau \\ 
 \leq &\ C\left(\sum_{m \in \mathbb{Z}}\int_{-\infty}^{\infty}{\langle \tau - n^{3}+\mu n \rangle^{2b'}\langle n\rangle^{-2} \left|\widehat{v}_{n}(m, \tau)\right|^{2}}d\tau \right)^{\frac{1}{2}}\times \\& \left(\sum_{m \in \mathbb{Z}}\int_{-\infty}^{\infty}{\langle \tau + n^{2}\rangle^{2b'}  \langle n\rangle^{2}\left|\overline{\widehat{A_{\epsilon}^{*}u_{n}}(n, \tau)}\right|^{2}}d\tau \right)^{\frac{1}{2}}  \\ 
 \leq &\ C\|v_{n}\|_{Y_{T}^{-1, b'}} \|A_{\epsilon}^{*}u_{n}\|_{X^{1, b'}_{T}} \\  
 \leq &\ C \|v_{n}\|_{Y_{T}^{-1, b'}} \|u_{n}\|_{X^{0, b'}_{T}}.
\end{split}
\end{equation}
Noting that, for $ b' = \frac{1}{2}-$, we have $$v_{n} \to 0
\quad \text{strongly in } Y_{T}^{-1, b'}$$ and  thanks to \eqref{bound-prop} we get that $\|u_{n}\|_{X^{0, b'}_{T}}$ is bounded. Since $$v_{n} \to 0\quad \text{in }Y^{-1, b'}_{T},$$ yields that 
 $$\sup_{0 < \epsilon \leq 1}|(i\partial_{x}v_{n}, A_{\epsilon}^{*}u_{n})_{L^{2}_{t,x}}| \to 0,$$ 
 due to \eqref{prop-comp1}. Additionally,  holds that
$$
|\left(f_{n}, A_{\epsilon}^{*}u_{n} \right)| \leq  \|f_{n}\|_{X_{T}^{-1, -\frac{1}{2}}} \|u_{n}\|_{X_{T}^{0, \frac{1}{2}}},
$$
which yields that $$\sup_{1<\epsilon\leq0}|\left(f_{n}, A_{\epsilon}^{*}u_{n} \right)| \to 0, \quad \text{as } n\to\infty.$$ The same kind of estimates for the other terms in \eqref{prop-comp} give us the following limit
\begin{eqnarray*}
\sup_{1<\epsilon\leq0}|\alpha_{n, \epsilon}| \to 0, \quad\text{when }n\to\infty.
\end{eqnarray*}
Noting that  
$$[A, \partial_{x}^{2}] = -2\psi(t)(\partial_{x} \varphi)\partial_{x}D^{-1} - \psi(t)(\partial_{x}^{2})D^{-1},
$$
since $D^{-1}$ commutes with $\partial_{x}$, and taking the supremum on $\epsilon$ tending to $0$, we conclude that 
\begin{eqnarray*}
\left([A, \partial_{x}^{2}]u_{n}, u_{n} \right) \to 0,\quad \text{ as } n \to \infty.
\end{eqnarray*}
Putting together these convergences, above mentioned, we may have that
\begin{eqnarray*}
(\psi(t)(\partial_{x}^{2}\varphi)D^{-1}u_{n}, u_{n}) \to 0\quad \text{ as } n \to \infty.
\end{eqnarray*}
Note that, $-i\partial_{x}D^{-1}$ is the orthogonal projection 
on the subspace of functions with $\widehat{u}(0) = 0$, and 
$\widehat{u}_{n}(0)(t)$ tends to $0$ in $L^{2}([0, T])$ we have 
\begin{eqnarray*}
(\psi(t)(\partial_{x}\varphi)\widehat{u_{n}}(0)(t), u_{n}) \to 0, \quad \text{ as } n \to \infty.
\end{eqnarray*} 
Thus we have proved that for any $\varphi \in C^{\infty}(\mathbb{T})$ and $\psi \in C^{\infty}_{0}(]0, T[)$, 
\begin{eqnarray*}
(\psi(t)(\partial_{x}\varphi)u_{n}, u_{n}) \to 0, \quad \text{ as } n \to \infty.
\end{eqnarray*} 
We conclude the first part of this proof by observing that 
a function $\phi \in C^{\infty}(\mathbb{T})$ can be written as 
$\partial_{x}\varphi$ for some $\varphi \in C^{\infty}(\mathbb{T})$ if and only if $\int_{\mathbb{T}}{\varphi}dx = 0$. 
Thus for any $\chi \in C^{\infty}_{0}(\omega)$ and any 
$x_{0} \in \mathbb{T}$, $\phi(x) = \chi(x) - \chi(\cdot - x_{0})$ 
can be written as $\phi = \partial_{x}\varphi$ for some 
$\varphi \in C^{\infty}(\mathbb{T})$. 

Now we use the hypothesis that $u_{n} \to 0$ strongly in 
$L^{2}(0, T; L^{2}(\omega))$ to conclude that 
\begin{eqnarray*}
\lim_{n \to \infty}(\psi(t)\chi u_{n}, u_{n}) = 0.
\end{eqnarray*}
Therefore, for any $x_{0} \in \mathbb{T}$,
\begin{eqnarray*}
\lim_{n \to \infty}(\psi(t)\chi(\cdot - x_{0})u_{n}, u_{n}) = 0
\end{eqnarray*}
and yields that $u_{n} \to 0$ in $L^{2}_{loc}(0, T; L^{2}(\mathbb{T}))$ by constructing a partition of the unity 
of $\mathbb{T}$ involving functions of the form 
$\chi_{i}(\cdot - x_{0}^{i})$ with $\chi_{i} \in C^{\infty}(\omega)$ and $x_{0}^{i} \in \mathbb{T}$.

To prove that $v_{n} \to 0$ strongly in $L^{2}_{loc}(0, T; L^{2}_0(\mathbb{T}))$ we argue analogously, but in this case considering $L_{2} = \partial_{t}+ \partial_{x}^{3} + \mu \partial_{x}$, $B_{1} = \varphi(x)D^{-2}$ and $A_{2} = \psi(t)B_{2}$ and for $\epsilon > 0$, $A_{2\epsilon} = A_{2}e^{\epsilon \partial_{x}^{2}}$. In this situation, we deal with 
\begin{eqnarray*}
\widetilde{\alpha_{n, \epsilon}} = \left([A_{2\epsilon}, L_{2}]v_{n}, v_{n} \right)_{L^{2}_{t,x}}
\end{eqnarray*}
the novelty, in this case, is that
\begin{eqnarray*}
\widetilde{\alpha_{n, \epsilon}} &=& (g_{n}, A_{2\epsilon}^{*}v_{n}) + (A_{2\epsilon}v_{n}, g_{n}) + (Re(\partial_{x}u_{n}), v_{n}) +%\\
%& = & 
(v_{n}, Re(\partial_{x}u_{n}))
\end{eqnarray*}
and the terms $(Re(\partial_{x}u_{n}), v_{n}) +%\\
%& = & 
(v_{n}, Re(\partial_{x}u_{n}))$ can be dealt as in \eqref{prop-comp1}. The rest of the proof is then completed following the same steps as the first case and the proof is complete.
\end{proof}

We now prove the propagation of regularity for the linear operator associated with the NLS--KdV system.

\begin{proposition}
[\textit{Propagation of regularity}] \label{prop-reg}
Let $T > 0$, $r \in \mathbb{R}_{+}$ and $(u,v) \in X_{T}^{r, \frac{1}{2}} \times Y_{T}^{r, \frac{1}{2}}$ solution of 
\begin{equation*}
\left\{\begin{array}{ll}
i \partial_{t} u+\partial_{x}^{2} u=i \partial_{x}v+ f,  & (x,t)\in \mathbb{T} \times (0,T),\\
\partial_{t} v + \partial_{x}^{3} v + \mu \partial_{x}v= Re\left(\partial_{x}u \right)+ g,  & (x,t)\in \mathbb{T} \times (0,T),
\end{array}\right.
\end{equation*}
with $(f, g) \in X_{T}^{r, -\frac{1}{2}} \times Y_{T}^{r, -\frac{1}{2}}$. Assume that there exists a non-empty 
open set $\omega \subset \mathbb{T}$ such that 
$$(u, v) \in L^{2}_{loc}(]0, T[; H^{r + \rho}(\omega) \times H_0^{r + \rho}(\omega)),$$ for some $0 < \rho <\frac{1}{4}$.  Then $$(u, v) \in L^{2}_{loc}(]0, T[; H^{r + \rho}(\mathbb{T}) \times H_0^{r +  \rho}(\mathbb{T})).$$
\end{proposition}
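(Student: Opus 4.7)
The aim is to adapt the microlocal commutator argument of \cite{dehman-gerard-lebeau,Laurent,Laurent-esaim} to the coupled NLS--KdV system, the main new difficulty being that the two operators have different principal orders and couple through first-order terms. Denote $L_1:=i\partial_t+\partial_x^2$ and $L_2:=\partial_t+\partial_x^3+\mu\partial_x$. Fix $\psi\in C_0^\infty(]0,T[)$ and $\varphi\in C^\infty(\mathbb{T})$ to be chosen later, and introduce the pseudo-differential operators
\[
A_1:=\psi(t)\varphi(x)D^{2(r+\rho)-1},\qquad A_2:=\psi(t)\varphi(x)D^{2(r+\rho)-2},
\]
with $D^{\cdot}$ as in \eqref{DR}, together with their $\epsilon$-smoothings $A_{j,\epsilon}:=A_je^{\epsilon\partial_x^2}$. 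The orders are chosen so that the principal parts of both $[A_1,L_1]$ and $[A_2,L_2]$ are of the form $c_j\psi(t)(\partial_x\varphi)D^{2(r+\rho)}$, i.e.\ both produce an $H^{r+\rho}$-quadratic form on the solution when tested against it.

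\textbf{Twofold computation of the bracket.} For each $j$ we evaluate $([A_{j,\epsilon},L_j]\cdot,\cdot)_{L^2_{t,x}}$ in two different ways. On one side, the symbolic calculus provides the leading contribution $-c_j(\psi(t)(\partial_x\varphi)D^{2(r+\rho)}u,u)$ (resp.\ $v$), up to a remainder of order $2(r+\rho)-1$ and a Schwartz-benign time-derivative term coming from $(\psi'(t)\varphi D^{2(r+\rho)-j}\cdot,\cdot)$. On the other side, using the equation we obtain
\[
([A_{1,\epsilon},L_1]u,u)=(i\partial_x v+f,A_{1,\epsilon}^*u)-(A_{1,\epsilon}u,i\partial_x v+f),
\]
and the analogous identity for the pair $(A_{2,\epsilon},L_2)$ tested against $v$, with coupling now given by $\operatorname{Re}(\partial_x u)+g$. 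Adding the two identities with appropriate weights and passing to the limit $\epsilon\to 0^+$ yields, modulo strictly lower-order terms, an upper bound on
\[
\int_0^T\!\!\int_{\mathbb{T}}\psi(t)(\partial_x\varphi)(x)\bigl(|D^{r+\rho}u|^2+|D^{r+\rho}v|^2\bigr)\,dx\,dt
\]
in terms of the source and coupling contributions.

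\textbf{Key coupling estimates.} The heart of the proof is to control the cross-coupling terms $(i\partial_x v,A_{1,\epsilon}^*u)$ and $(\operatorname{Re}(\partial_xu),A_{2,\epsilon}^*v)$ uniformly in $\epsilon$: they pair a quantity naturally measured in the KdV Bourgain space $Y^{\cdot,\cdot}$ against a quantity naturally measured in the NLS Bourgain space $X^{\cdot,\cdot}$. Mimicking the proof of Proposition \ref{Prop_Bou}, we write
\[
|n|=H(n,\tau)\,\langle\tau+n^2\rangle^{1/2-\epsilon}\langle\tau-n^3+\mu n\rangle^{1/2-\epsilon},
\]
apply Lemma \ref{LemaBou} to redistribute the single derivative between the two dispersive weights, and then use $(u,v)\in X^{r,1/2}_T\times Y^{r,1/2}_T$ to close, the gain $\rho$ being transferred from one factor to the other through Proposition \ref{lema-bour}. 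The source contributions $(f,A_{1,\epsilon}^*u)$ and $(g,A_{2,\epsilon}^*v)$ are bounded directly by the $X^{r,-1/2}_T\times X^{r,1/2}_T$ and $Y^{r,-1/2}_T\times Y^{r,1/2}_T$ dualities.

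\textbf{Conclusion and main obstacle.} Once the local estimate is established, a partition-of-unity argument identical to the one at the end of the proof of Proposition \ref{Prop-Comp} closes the propagation: choose $\varphi\in C^\infty(\mathbb{T})$ with $\int_\mathbb{T}\varphi\,dx=0$ so that $\partial_x\varphi=\chi-\chi(\cdot-x_0)$ for any prescribed $x_0\in\mathbb{T}$ and $\chi\in C_0^\infty(\omega)$; the a priori $H^{r+\rho}$-regularity on $\omega$ then transfers to a neighbourhood of every $x_0$, giving the desired $(u,v)\in L^2_{\mathrm{loc}}(]0,T[;H^{r+\rho}(\mathbb{T})\times H^{r+\rho}_0(\mathbb{T}))$. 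The \emph{main obstacle}, and the novelty with respect to \cite{Laurent,Laurent-esaim}, is precisely the two-order mismatch of the principal symbols: it forces the simultaneous use of both Bourgain calculi through the exchange estimate of Lemma \ref{LemaBou}, and this budget fixes the threshold $\rho<1/4$, since after one uses the $1/2-$ weights on both sides, only a strictly smaller gain can be absorbed without exceeding the initial regularity $1/2$.
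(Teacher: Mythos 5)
Your proposal follows essentially the same route as the paper: the same commutator/multiplier argument with $\psi(t)\varphi(x)D^{2(r+\rho)-1}$ for the Schr\"odinger part and $D^{2(r+\rho)-2}$ for the KdV part, the twofold evaluation of the bracket, the use of Lemma \ref{LemaBou} to absorb the first-order coupling terms into the two Bourgain calculi, and the final translation/partition-of-unity step; the constraint $r+2\rho-\tfrac12\le r$, i.e.\ $\rho\le\tfrac14$, appears in the paper exactly as you identify it. The only cosmetic differences are that the paper regularizes the solution itself via $e^{\frac1n\partial_x^2}$ rather than smoothing the operators, and treats the two identities separately rather than summing them with weights.
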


\begin{proof}
We first regularize $(u, v)$ by introducing, for each  $n \in \mathbb{N}$, 
$$
u_{n} := e^{\frac{1}{n}\partial_{x}^{2}}u, \quad v_{n} := e^{\frac{1}{n}\partial_{x}^{2}}v, 
$$
and
$$
f_{n} := e^{\frac{1}{n}\partial_{x}^{2}}f, \quad g_{n} := e^{\frac{1}{n}\partial_{x}^{2}}g.
$$
In this case, by hypothesis over $u$ and $v$, we have  $$\|(u_{n}, v_{n})\|_{X^{r, \frac{1}{2}}_{T} \times Y^{r, \frac{1}{2}}_{T}} \leq C \quad \text{and} \quad \|(f_{n}, g_{n}) \|_{X_{T}^{r, -\frac{1}{2}} \times Y_{T}^{r, -\frac{1}{2}}} \leq C,$$ for some constant $C > 0$. 

Pick $s = r + \rho$, $\varphi \in C^{\infty}(\mathbb{T})$ and 
$\psi \in C^{\infty}(]0, T[)$ taking real values.  As in the proof of Proposition $\ref{Prop-Comp}$ we define $B = D^{2s- 1} \varphi(x)$ and $A = \psi(t)B$, with $D$ defined in \eqref{DR}. Set $L_{1} = i \partial_{t} + \partial_{x}^{2}$ the Schrödinger operator and 
\begin{equation*}
\begin{split}
\alpha_{n} :=& \left(L_{1}u_{n}, A^{*}u_{n} \right)_{L^{2}_{t,x}} - \left(Au_{n}, L_{1}u_{n}\right)_{L^{2}_{t,x}} \\
 = & \left([A, \partial_{x}^{2}]u_{n}, u_{n}\right)_{L^{2}_{t,x}} - i (\psi'(t)Bu_{n}, u_{n})_{L^{2}_{t,x}} \\
 = & i(\partial_{x}v_{n}, A^{*}u_{n})_{L^{2}_{t,x}} + (f_{n}, A^{*}u_{n})_{L^{2}_{t,x}} - (Au_{n}, i\partial_{x}v_{n})_{L^{2}_{t,x}} - 
(Au_{n}, f_{n})_{L^{2}_{t,x}}.
\end{split}
\end{equation*}
Notice that, as we choose  $\rho > 0$ small 
enough, we have $r + 2\rho - \frac{1}{2} \leq r$. Hence, 
we obtain
\begin{eqnarray*}
\left| (Au_{n}, f_{n})_{L^{2}_{t,x}}\right| \leq \|Au_{n}\|_{X_{T}^{-r, \frac{1}{2}}} \|f_{n}\|_{X_{T}^{r, -\frac{1}{2}}} \leq C,\forall n \in \mathbb{N}.
\end{eqnarray*}
Arguing as in \eqref{prop-comp} we can similarly get that 
\begin{eqnarray*}
\left| (\partial_{x}v_{n}, Au_{n})_{L^{2}_{t,x}}\right| \leq C, \forall n \in \mathbb{N}
\end{eqnarray*}
and also estimate all the other terms to get that $([A, \partial_{x}^{2}]u_{n}, u_{n})$ is uniformly bounded in $n \in \mathbb{N}$.

On the other hand, notice that 
\begin{eqnarray*}
[A, \partial_{x}^{2}] = - 2 \Psi(t)D^{2s- 1}(\partial_{x}\varphi)\partial_{x} - \Psi(t)D^{2s-1}(\partial_{x}^{2}\varphi)
\end{eqnarray*}
and 
\begin{eqnarray*}
|\left(\psi(t)D^{2s-1}(\partial_{x}\varphi)u_{n}, u_{n}, u_{n} \right)| \leq C\|u_{n}\|_{X_{T}^{r, \frac{1}{2}}}\|v_{n}\|_{X_{T}^{r,-\frac{1}{2}}} \leq C.
\end{eqnarray*}
Analogously, 
\begin{eqnarray*}
|(\psi(t)D^{2s-1}(\partial_{x}\varphi)\partial_{x}u_{n}, u_{n})| \leq C.
\end{eqnarray*}
Since, $fu \in L^{2}_{loc}(0, T; H^{s}(\mathbb{T}))$ and 
$f \partial_{x}u \in L^{2}_{loc}(0, T; H^{s-1}(\mathbb{T}))$ we can conclude that $$fu_{n} = e^{\frac{1}{n}\partial_{x}^{2}}fu + \left[f, e^{\frac{1}{n}\partial_{x}^{2}}\right]$$ is uniformly bounded 
in $L^{2}_{loc}(0, T; H^{s}(\mathbb{T}))$, since 
$s \leq r+1$. The same reasoning we get that $f \partial_{x}u$ is uniformly bounded 
in $L^{2}_{loc}(0, T; H^{s-1}(\mathbb{T}))$ thus, 
\begin{eqnarray*}
|(\psi(t)D^{2s-1}f\partial_{x}u, D^{s}fu_{n})| \leq C.
\end{eqnarray*}
Due to the fact that $[D^{s}, f]$ is a pseudo-differential operator of order $s-1$ and $u \in L^{2}(0, T; H^{r}(\mathbb{T}))$, we have
\begin{equation*}
\begin{split}
|(\psi(t)D^{s-1}f\partial_{x}u_{n}, [D^{s}, f]u_{n})|  \leq & \|D^{s-1}fu_{n}\|_{L^{2}(0, T;L^{2}(\mathbb{T}))} \|D^{\rho}[D^{\rho}, f]u_{n}\|_{L^{2}(0, T; L^{2}(\mathbb{T}))}\\
  \leq & \|u_{n}\|_{L^{2}H^{r}}\|u_{n}\|_{L^{2}H^{s -1 + \rho}} \leq C
 \end{split}
\end{equation*}
and finally, 
$$
|(\psi(t)D^{2s-1}(\partial_{x}\varphi)\partial_{x}u_{n}, u_{n})| \leq C.
$$
As in proposition \ref{Prop-Comp}, we finish this proof by 
writing $\partial \varphi = f_{2}(\cdot) - f_{2}(\cdot - x_{0})$, with $x_{0} \in \mathbb{T}$, 
thus 
$$
|(\psi(t)D^{2s-1}f_{2}(\cdot - x_{0})\partial_{x}u_{n}, u_{n})| \leq C. 
$$
From this calculation, we conclude that 
$u \in L^{2}_{loc}(0, T; H^{r + \rho}(\mathbb{T}))$. 

To conclude the proof, we argue analogously for the KdV operator $L_{2} = \partial_{t} + \partial_{x}^{3} + \mu \partial_{x}$, and then we consider
\begin{equation*}
\widetilde{\alpha}_{n} = \left([A, L_{2}]v_{n}, v_{n} \right)_{L^{2}(0, T; L^{2}_0(\mathbb{T}))} \\
 =  (Lv_{n}, A^{*}v_{n})_{L^{2}_{t,x}} + (Av_{n}, L_{2}v_{n})_{L^{2}_{t,x}}
\end{equation*}
and we also have, 
\begin{eqnarray*}
\widetilde{\alpha}_{n} = (\operatorname{Re}(\partial_{x}u_{n}), A^{*}v_{n})_{L^{2}_{t,x}} + (Av_{n}, g_{n})_{L^{2}_{t,x}} + (g_{n}, A^{*}v_{n})_{L^{2}_{t,x}} + (Av_{n}, \operatorname{Re}(\partial_{x}u_{n}))_{L^{2}_{t,x}}   
\end{eqnarray*}
and the main difference from the first 
the equation is the estimate of 
\begin{eqnarray*}
(\operatorname{Re}(\partial_{x}u_{n}), A^{*}v_{n})_{L^{2}_{t,x}} \quad \quad \hbox{ and } \quad  \quad (Av_{n}, \operatorname{Re}(\partial_{x}u_{n}))_{L^{2}_{t,x}} , 
\end{eqnarray*}
which can be dealt with as in \eqref{prop-comp1}, and so the propagation of regularity is proved.
\end{proof}

\begin{remark}
We mention that the main ingredient in the proof of Propositions \ref{Prop-Comp} and \ref{prop-reg} is related to estimating the terms 
$$(i\partial_{x}v_{n}, A_{\epsilon}^{*}u_{n})_{L^{2}_{t,x}},$$
$$(Re(\partial_{x}u_{n}), v_{n}) +%\\
%& = & 
(v_{n}, Re(\partial_{x}u_{n})),$$
and
$$(\operatorname{Re}(\partial_{x}u_{n}), A^{*}v_{n})_{L^{2}_{t,x}}+(Av_{n}, \operatorname{Re}(\partial_{x}u_{n}))_{L^{2}_{t,x}},$$ that is, where the coupled terms are analyzed.  In this sense Lemma \ref{LemaBou} is essential to prove the propagation results for the system under consideration in this work. 
\end{remark}

The next result is a consequence of Proposition \ref{prop-reg}.

\begin{corollary}\label{Cor-Reg}
Let $(u,v) \in X^{r, \frac{1}{2}}_{T} \times Y^{r, \frac{1}{2}}_{T}$ be a solution of
\begin{equation}
\left\{\begin{array}{ll}
i \partial_{t} u+\partial_{x}^{2} u=i \partial_{x} v+\beta|u|^{2} u,  & (x,t)\in \mathbb{T} \times (0,T),\\
\partial_{t} v+\partial_{x}^{3} v + \mu \partial_{x}v +\frac{1}{2} \partial_{x}\left(v^{2}\right)+\mu \partial_{x} v= \operatorname{Re}\left(\partial_{x} u\right) ,  & (x,t)\in \mathbb{T} \times (0,T).
\end{array}\right.
\end{equation}
Assume that, for some nonempty open set $\omega \subset \mathbb{T}$, $(u, v) \in \left[ C^{\infty}(\omega \times (0, T))\right]^{2} $. So $(u, v) \in [C^{\infty}(\mathbb{T} \times (0, T))]^{2}$.
\end{corollary}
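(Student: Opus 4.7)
The plan is a standard bootstrap: iterate the propagation of regularity (Proposition \ref{prop-reg}), using the well-posedness theory of Section \ref{Sec3} to re-inject each gain in Sobolev regularity back into the Bourgain scale so the iteration can continue. First I would cast the system as the coupled linear NLS--KdV problem of Proposition \ref{prop-reg} with right-hand sides $f := \beta|u|^{2}u$ and $g := -\tfrac{1}{2}\partial_{x}(v^{2})$. The nonlinear estimates in Proposition \ref{Est-Mul}, together with the continuous embeddings $Z^{r}\hookrightarrow X^{r,-1/2}$ and $W^{r}\hookrightarrow Y^{r,-1/2}$ that are immediate from the definitions, give $(f,g)\in X^{r,-1/2}_{T}\times Y^{r,-1/2}_{T}$, which is exactly the source hypothesis needed by Proposition \ref{prop-reg}.

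Next, because $(u,v)\in (C^{\infty}(\omega\times(0,T)))^{2}$, we certainly have $(u,v)\in L^{2}_{loc}(0,T;H^{r+\rho}(\omega)\times H^{r+\rho}_{0}(\omega))$ for every $\rho>0$. Fixing any $\rho\in (0,1/4)$ and invoking Proposition \ref{prop-reg} yields
$$
(u,v)\in L^{2}_{loc}\bigl(0,T;H^{r+\rho}(\mathbb{T})\times H^{r+\rho}_{0}(\mathbb{T})\bigr).
$$
To iterate I need to upgrade this information back into the Bourgain scale. For almost every $t_{0}\in(0,T)$ the trace $(u(t_{0}),v(t_{0}))$ lies in $H^{r+\rho}(\mathbb{T})\times H^{r+\rho}_{0}(\mathbb{T})$; applying Theorem \ref{Teo-Ex} with these initial data produces a solution in $\widetilde{X}^{r+\rho}_{J}\times\widetilde{Y}^{r+\rho}_{J}$ on some open neighborhood $J$ of $t_{0}$, which must coincide with $(u,v)$ by the uniqueness clause in that theorem. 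Covering any compact subinterval $I\subset (0,T)$ by finitely many such neighborhoods shows
$$
(u,v)\in X^{r+\rho,1/2}(I)\times Y^{r+\rho,1/2}(I).
$$

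Now I restart the argument with $r$ replaced by $r+\rho$ on the smaller interval $I$, noting that the $C^{\infty}$ hypothesis on $\omega\times(0,T)$ is preserved. After $N$ such passes we obtain $(u,v)\in X^{r+N\rho,1/2}(I')\times Y^{r+N\rho,1/2}(I')$, and hence $(u,v)\in C(I';H^{r+N\rho}(\mathbb{T})\times H^{r+N\rho}_{0}(\mathbb{T}))$ on any $I'\Subset I\subset (0,T)$. Since $N$ and $I'$ are arbitrary, Sobolev embedding gives spatial $C^{\infty}$ regularity on $\mathbb{T}\times(0,T)$. To upgrade this to joint $C^{\infty}$ regularity in $(x,t)$, I would use the two equations themselves to write $\partial_{t}u$ and $\partial_{t}v$ as polynomial combinations of spatial derivatives of $(u,v)$, differentiate and iterate.

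The principal obstacle I anticipate is the promotion step above: Proposition \ref{prop-reg} outputs only an $L^{2}_{loc}$-in-time / $H^{r+\rho}$-in-space statement, whereas the next round of propagation demands Bourgain-space regularity $X^{r+\rho,1/2}\times Y^{r+\rho,1/2}$. The mechanism for bridging this gap is a combination of a Fubini-type argument (to select time slices on which the trace has the right Sobolev regularity) with the uniqueness and local-in-time solvability contained in Theorem \ref{Teo-Ex}; doing so carefully while keeping track of the fact that each application costs a little of the time interval is the essential bookkeeping of the proof.
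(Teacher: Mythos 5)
Your proposal follows essentially the same route as the paper's proof: place the nonlinearities in $X^{r,-1/2}_{T}\times Y^{r,-1/2}_{T}$ via Proposition \ref{Est-Mul}, apply Proposition \ref{prop-reg} once, select a good time slice $t_{0}$ and re-solve from $(u(t_{0}),v(t_{0}))$ using the well-posedness theorem to recover Bourgain-scale regularity, then iterate. Your write-up is in fact more explicit than the paper's about the promotion step and the final upgrade to joint smoothness in $(x,t)$, but the argument is the same.
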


\begin{proof}
Note that, we may assume that 
$[v] = 0$, thus we can apply the estimate \eqref{Est-Bi} to conclude that $\partial_{x}(v^{2}) \in Y_{T}^{r, -\frac{1}{2}}$ and we have that $\beta|u|^{2}u \in X^{r, -\frac{1}{2}}_{T}$ by estimate \eqref{Est-Tri}. Then we can apply the 
Proposition $\ref{prop-reg}$ to conclude that 
$(u, v) \in L^{2}_{loc}(0, T; H^{r + \rho}(\mathbb{T}) \times H^{r + \rho}(\mathbb{T}))$. Hence we can get 
$t_{0} \in (0, T)$ such that $(u(t_{0}),v(t_{0})) \in H^{r+\rho}(\mathbb{T}) \times H^{r + \rho}(\mathbb{T})$, then solving the equation with $(u(t_{0}), v(t_{0}))$ as initial data, it follows that 
$(u, v) \in L^{2}(0, T; H^{r+\rho}(\mathbb{T}) \times H^{r + \rho}(\mathbb{T}))$ and iterating this process we get that if $(u, v) \in \left[C^{\infty}(\mathbb{T}\times (0, T)) \right]^{2}$, showing the result.
\end{proof}

\section{Unique continuation for the NLS--KdV system}\label{Appendix2}

We present the unique continuation property for the NLS--KdV system. This will be proved by taking advantage of Carleman estimates already proved for the Korteweg-de Vries and Schrödinger operators in \cite{Arauna,BousinesqR}.  For self-contentedness, we will present below the estimates necessary to demonstrate the result.

\subsection{Carleman estimates}  Consider the KdV equation on $2\pi$-periodic conditions
\begin{equation}\label{A1}
\left\{\begin{array}{ll}
\partial_tq + \partial^3_xq =f, &  (x,t)\in (0,2\pi)\times(0,T) , \\
q(0,t)=q(2\pi,t)  & t\in(  0,T)  \text{,}\\
\partial_xq(0,t) =\partial_xq(2\pi,t) &  
t\in(  0,T)  \text{,}\\
\partial^2_xq(0,t)=\partial^2_xq(2\pi,t)  &
t\in(  0,T)  \text{,}\\
q(x,0) =q_0(x), &  x\in (0,2\pi).
\end{array}\right.
\end{equation}
Then the following Carleman inequality is a consequence of the result proved in \cite[Proposition 3.1]{BousinesqR}.
\begin{proposition}Pick any $T>0$. 
There exist two constants $C>0$ and $s_0 >0$ such that any $f\in L^2(0,T;L^2(0,2\pi))$, any $q_0\in L^2_0(0,2\pi)$ and any  $s\ge s_0$, the solution $q$ of
\eqref{A1} fulfills
\begin{equation}\label{D60}
\begin{split}
\int_0^T\!\! \int_0^{2\pi} &[s\vf |q_{xx}|^2 +(s\vf )^3 |q_x|^2 +(s\vf )^5 |q|^2 ]e^{-2s\vf} dx dt \le C\int_0^T\!\! \int_0^{2\pi}|f|^2e^{-2s\vf}dxdt\\
&+ C\left(  \int_0^T\!\! \int_{\omega}[s\vf |q_{xx}|^2 +(s\vf )^3 |q_x|^2 +(s\vf )^5 |q|^2 ]e^{-2s\vf} dx dt \right).
\end{split}
\end{equation}
\end{proposition}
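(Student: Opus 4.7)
The plan is to reduce to a standard Hörmander-style Carleman estimate by conjugation with the exponential weight. Setting $w := e^{-s\varphi} q$, we compute the conjugated operator
\[
L_s w := e^{-s\varphi}(\partial_t + \partial_x^3)(e^{s\varphi} w) = \partial_t w + \partial_x^3 w + s(\partial_t\varphi)w + 3s(\partial_x\varphi)\partial_x^2 w + 3s^2(\partial_x\varphi)^2\partial_x w + \cdots,
\]
and split it as $L_s w = P_+ w + P_- w$, grouping terms with an even number of $s$'s into the self-adjoint part $P_+$ and the rest into the skew-adjoint part $P_-$ (with respect to the weighted inner product on $L^2((0,T)\times\mathbb{T})$). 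Squaring yields
\[
\|L_s w\|_{L^2}^2 = \|P_+ w\|^2 + \|P_- w\|^2 + 2\,\mathrm{Re}\,(P_+ w, P_- w),
\]
and the cross term is, after integration by parts, controlled by $(\tfrac{1}{2}[P_+,P_-]w, w)$ plus spatial boundary contributions, which vanish thanks to the $2\pi$-periodicity of $q, q_x, q_{xx}$, and time boundary contributions, which vanish provided $\varphi \to +\infty$ as $t\to 0^+$ and $t\to T^-$.

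The next step is to choose $\varphi(x,t) = \theta(t)\psi(x)$ where $\theta(t) \sim [t(T-t)]^{-k}$ (for suitable $k$) and $\psi\in C^\infty(\mathbb{T})$ is adapted to $\omega$. The computation of the commutator $[P_+,P_-]$ produces, as the principal contribution, a positive operator whose symbol is a polynomial in $s\varphi$ multiplying $\psi'(x)^2$ and its derivatives. For this to give the desired pointwise positivity for the weighted norms $(s\varphi)|q_{xx}|^2 + (s\varphi)^3|q_x|^2 + (s\varphi)^5|q|^2$ at leading order in $s$, the weight $\psi$ must satisfy a Hörmander-type pseudoconvexity condition of the form $\psi' \ne 0$ and $(\psi')^3 + (\mathrm{lower order}) > 0$ off $\omega$. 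All lower order terms arising from derivatives of $\theta$ and $\psi$ are then absorbed by the leading positive terms upon taking $s \geq s_0$ large enough.

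The main obstacle — which is specific to the periodic setting — is that no smooth function on $\mathbb{T}$ can have $\psi' \ne 0$ everywhere. The standard workaround is to choose $\psi$ so that all its critical points lie in $\omega$: concretely, take $\psi$ strictly monotone outside $\omega$ and with a single maximum and single minimum, both located in $\omega$. The terms produced by the commutator where $(\psi')^k$ degenerates then have their support inside $\omega$, and one moves them to the right-hand side as the observation term $\int_0^T\!\!\int_\omega[\cdots]e^{-2s\varphi}dxdt$ in the statement. This is where the explicit construction in \cite{BousinesqR} enters: one verifies that such a $\psi$ exists and that the pointwise positivity of $[P_+,P_-]$ on $\mathbb{T}\setminus\omega$ holds for all three orders simultaneously.

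Finally, undoing the conjugation by writing $q = e^{s\varphi}w$ and using $|\partial_x^j q|^2 e^{-2s\varphi} \lesssim |\partial_x^j w|^2 + (s\varphi)^{2j}|w|^2$ (for $j \le 2$), together with the bound $\|L_s w\|^2 \le \int_0^T\!\!\int_0^{2\pi}|f|^2 e^{-2s\varphi}dxdt$, yields \eqref{D60} after absorbing the extra $(s\varphi)^{2j}|w|^2$ terms on the left into the leading $(s\varphi)^5|w|^2$ for $s\ge s_0$. The fact that $q_0 \in L^2_0(0,2\pi)$ enters only to guarantee that $q(\cdot,t)$ has zero mean for all $t$, which is consistent with the zero-mean structure preserved by the KdV flow and does not otherwise affect the local estimate.
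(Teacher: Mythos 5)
The paper does not actually prove this proposition: it is imported as a consequence of \cite[Proposition 3.1]{BousinesqR}, and the weight $\varphi$ is not even specified in the text beyond a pointer to \cite{Arauna,BousinesqR}. Your sketch therefore supplies more than the paper does, and the route you describe --- conjugation $w=e^{-s\varphi}q$, splitting $L_s=P_++P_-$ into self-adjoint and skew-adjoint parts, expanding $\|L_sw\|^2$ and extracting positivity from the commutator $[P_+,P_-]$ with a Fursikov--Imanuvilov weight $\varphi=\theta(t)\psi(x)$, $\theta\sim[t(T-t)]^{-k}$ --- is exactly the scheme behind the cited estimate. You also correctly isolate the one point where the periodic setting differs from the interval setting of \cite{BousinesqR}: $\psi'$ must vanish somewhere on $\mathbb{T}$, and the fix is to place all critical points of $\psi$ inside $\omega$ so that the resulting loss of positivity is absorbed by the local observation term on the right-hand side.

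Two caveats. First, a small slip: with the usual conventions the terms of $L_s$ carrying \emph{odd} powers of $s$ (e.g.\ $s^3\varphi_x^3w$ and $3s\,\partial_x(\varphi_xw_x)$) form the self-adjoint part, while the even powers ($w_t+w_{xxx}$ and the $s^2$ block $3s^2(\varphi_x^2w_x+\varphi_x\varphi_{xx}w)$) are skew-adjoint; you state the parity the other way around. This does not affect the strategy. Second, and more substantively, the entire analytic content of a third-order Carleman estimate is the verification that $([P_+,P_-]w,w)$, after integration by parts redistributes derivatives, dominates $c\int[(s\varphi)|w_{xx}|^2+(s\varphi)^3|w_x|^2+(s\varphi)^5|w|^2]$ off $\omega$ \emph{simultaneously at all three orders}; unlike the second-order case, the leading coefficients involve odd powers of $\psi'$ whose sign flips on the torus, so the sign conditions on $\psi$ and the choice of $k$ in $\theta$ must be checked order by order. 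Your proposal asserts this positivity rather than establishing it, which is precisely the step the paper outsources to \cite[Proposition 3.1]{BousinesqR}; as a self-contained proof the sketch is therefore incomplete at its decisive point, though the architecture is the correct one.
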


Let us now borrow the results obtained in \cite[Theorem 3.2.]{Arauna}. Consider the Schrödinger equation on $2\pi$-periodic conditions
$$
\left\{\begin{array}{lll}
i\partial_t p+\partial^2_xp=g& (x,t)\in (0,2\pi)\times(0,T). \\
p(0, t)=p(2\pi, t)=0 & t\in (0, T), \\
\partial_xp(0, t)=\partial_xp(2\pi, t)=0 & t\in (0, T), \\
p(x,0)=p_0(x) & x\in(0,2\pi).
\end{array}\right.
$$
Thus, the following estimate holds.
\begin{theorem}
There exist constants $C>0$ and $s_{0} \geq 1$ such that
\begin{equation}\label{D61}
\begin{split}
\int_0^T\!\! \int_0^{2\pi} [s\vf \left|p_{x}\right|^{2}+(s\vf)^{3} |p|^{2} ]e^{-2s\vf}d x d t \leq& \int_0^T\!\! \int_0^{2\pi}|g|^2e^{-2s\vf}dxdt\\&+C  \int_0^T\!\! \int_{\omega} [(s\vf)^{3}|p|^{2} + (s\vf)\left|\operatorname{Re}\left(p_{x}\right)\right|^{2}] e^{-2s \vf} d x d t
\end{split}
\end{equation}
for all $s>s_{0}$ and $\omega\subset(0,2\pi)$.
\end{theorem}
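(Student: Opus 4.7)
My plan is to derive \eqref{D61} by the standard Carleman machinery: conjugate the Schrödinger operator by the weight $e^{-s\vf}$, split the resulting operator into its self‑adjoint and anti‑self‑adjoint pieces, and extract a positive commutator that absorbs the desired left‑hand side. Concretely, I would choose $\vf(x,t)=\theta(t)\phi(x)$ with $\theta(t)=[t(T-t)]^{-1}$ (so $\vf$ blows up at $t=0,T$ and the usual boundary terms in time vanish) and $\phi\in C^2([0,2\pi])$ a pseudoconvex weight whose only critical points lie inside $\omega$ (typically $\phi(x)=e^{\lambda\psi(x)}$ with $\psi$ constructed à la Fursikov--Imanuvilov). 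Setting $w=e^{-s\vf}p$, a direct computation yields
\begin{equation*}
e^{-s\vf}(i\partial_t+\partial_x^2)(e^{s\vf}w)= M_1 w + M_2 w,
\end{equation*}
where $M_1w=\partial_x^2w+s^2\vf_x^2\,w$ is self‑adjoint on $L^2((0,T)\times(0,2\pi);\mathbb{C})$ and $M_2w=iw_t+is\vf_t w+2s\vf_x\partial_x w+s\vf_{xx}w$ is anti‑self‑adjoint.

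The core identity is then
\begin{equation*}
\|e^{-s\vf}g\|_{L^2}^2=\|M_1w\|_{L^2}^2+\|M_2w\|_{L^2}^2+\bigl([M_1,M_2]w,w\bigr)_{L^2}.
\end{equation*}
I would compute $[M_1,M_2]$ explicitly; for a convex $\phi$ with $\phi_{xx}\ge\kappa>0$ outside $\omega$ the commutator yields, modulo lower‑order terms, a positive quadratic form controlling $s\vf\,|w_x|^2+(s\vf)^3|w|^2$ away from $\omega$. Choosing $s\ge s_0$ large enough lets one absorb the negative and boundary contributions, producing the unweighted Carleman bound
\begin{equation*}
\int_0^T\!\!\int_0^{2\pi}\bigl[s\vf|p_x|^2+(s\vf)^3|p|^2\bigr]e^{-2s\vf}\,dx\,dt
\le C\int_0^T\!\!\int_0^{2\pi}|g|^2e^{-2s\vf}\,dxdt
+C\int_0^T\!\!\int_\omega\bigl[(s\vf)^3|p|^2+(s\vf)|p_x|^2\bigr]e^{-2s\vf}\,dxdt
\end{equation*}
after passing from $w$ back to $p$ and using $|w_x|^2\lesssim e^{-2s\vf}(|p_x|^2+s^2\vf_x^2|p|^2)$.

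The genuinely delicate step is replacing $|p_x|^2$ in the localized observation by $|\mathrm{Re}(p_x)|^2$, since the standard commutator method treats complex $p$ as a single entity. My plan is to decompose $p=a+ib$ with $a,b$ real. Writing $g=g_1+ig_2$, the system $i\partial_tp+\partial_x^2p=g$ becomes
\begin{equation*}
a_{xx}-b_t=g_1,\qquad b_{xx}+a_t=g_2,
\end{equation*}
so that $a$ and $b$ satisfy coupled real‑valued Schrödinger‑type equations. One then runs the Carleman argument separately on $a$ and $b$. For $a$ the standard real‑valued Carleman estimate produces an observation $s\vf|a_x|^2=s\vf|\mathrm{Re}(p_x)|^2$ in $\omega$. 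For $b$, the cross identity $a_t=g_2-b_{xx}$ allows one to trade the gradient term $b_x$ inside the local integral for $a_x$ and $b$ through an integration by parts localized by a cut‑off $\chi\in C_c^\infty(\omega)$: schematically,
\begin{equation*}
\int\!\!\int\chi^2 (s\vf)|b_x|^2 e^{-2s\vf}\,dxdt
= -\int\!\!\int \partial_x\bigl(\chi^2(s\vf)e^{-2s\vf}b\bigr)\,b_x\,dxdt
+\int\!\!\int\chi^2(s\vf)b_{xx}b\,e^{-2s\vf}\,dxdt,
\end{equation*}
and the $b_{xx}$ factor is replaced by $a_t+$ lower order via the equation, after which a further integration by parts in $t$ shifts the derivative onto $\chi^2(s\vf)e^{-2s\vf}$ and brings in $a_x$ (hence $\mathrm{Re}(p_x)$) through a second use of the system. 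This is the step I expect to be the main obstacle: balancing the powers of $s\vf$ so that the new terms can be absorbed into the left‑hand side for $s$ large. Recombining the two real estimates gives \eqref{D61}; the remaining work is routine bookkeeping of the weight powers and verification that the chosen $\phi$ satisfies the pseudoconvexity needed for positivity of $[M_1,M_2]$.
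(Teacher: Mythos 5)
The paper does not actually prove this statement: it is quoted verbatim from \cite[Theorem 3.2]{Arauna} (just as the companion KdV estimate is quoted from \cite[Proposition 3.1]{BousinesqR}), so there is no internal proof to compare yours against. Judged on its own terms, your outline reproduces the generic Carleman scheme but has two concrete problems. First, the splitting is wrong as stated: $i\partial_t$ is \emph{formally symmetric} on complex $L^2$ (integration by parts in $t$ gives $\langle i\partial_t w, v\rangle=\langle w, i\partial_t v\rangle$ up to boundary terms), so it belongs with the self-adjoint part, not with $M_2$; with your grouping $M_2$ is neither symmetric nor skew-symmetric and the identity $\|e^{-s\vf}g\|^2=\|M_1w\|^2+\|M_2w\|^2+([M_1,M_2]w,w)$ does not hold. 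One must instead take $P_1w=iw_t+w_{xx}+s^2\vf_x^2w$, $P_2w=is\vf_tw+2s\vf_xw_x+s\vf_{xx}w$ and compute the cross term $2\operatorname{Re}(P_1w,P_2w)$ directly. This is fixable, but it is not a detail: the positivity analysis for Schr\"odinger differs from the parabolic case precisely because of where $i\partial_t$ sits.

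Second, and more seriously, the entire content of the theorem beyond the standard estimate is that the local gradient observation is $|\operatorname{Re}(p_x)|^2$ rather than $|p_x|^2$, and this is exactly the step you leave as a sketch. The sketch as written does not close: the displayed identity for $\int\!\!\int\chi^2(s\vf)|b_x|^2e^{-2s\vf}$ is not a correct integration by parts (the two terms on the right double-count the same manipulation), and after you substitute $b_{xx}=g_2-a_t$ and integrate by parts in $t$, the derivative falling on the weight produces factors of order $s\vf_t\sim s\vf^2$ and a term $\int\chi^2 s\vf\,b_t\,a\,e^{-2s\vf}$ in which $b_t$ must again be replaced via the system, generating $a_{xx}$; you have not verified that the resulting powers of $s\vf$ can all be dominated by $(s\vf)^3|p|^2+(s\vf)|\operatorname{Re}(p_x)|^2$ locally and by the left-hand side globally. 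Since this reduction is the only non-routine part of the statement, the proposal as it stands does not constitute a proof; it identifies the obstacle without overcoming it.
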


\subsection{Unique continuation property} Carleman inequalities with an internal observation for the KdV and Schrödinger operators ensures us the result of the unique continuation.
\begin{theorem}\label{Carleman}
Let $T > 0$ and $V_{1}, V_{2}, V_{3} \in C^{\infty}((0, T) \times \mathbb{T}))$, then the solution of 
\begin{equation*}
\left\{\begin{array}{ll}
 i \partial_{t}u + \partial_{x}^{2}u = i \partial_{x}v + V_{1}(x,t)u + V_{2}(x,t)\overline{u},  & (x,t)\in \mathbb{T} \times (0,T),\\
\partial_{t}v + \partial_{x}^{3}v +\mu \partial_{x}v = V_{3}(t, x)\partial_{x}u + \operatorname{Re}(\partial_{x}u),  & (x,t)\in \mathbb{T} \times (0,T),\\
u(x,t) = v(x,t) = 0, & (x,t)\in \omega \times (0, T),
\end{array}\right.
\end{equation*}
with $(u,v) \in C([0, T]; L^{2}(\mathbb{T}) \times L^{2}_{0}(\mathbb{T}))$, is the trivial one $u \equiv v \equiv 0$ in $\mathbb{T} \times (0,T)$ .
\end{theorem}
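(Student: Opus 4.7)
The plan is to use the two Carleman estimates \eqref{D60} and \eqref{D61} in tandem: each estimate, applied to one of the two equations, provides a weighted left-hand side, and the coupling derivatives between the equations are absorbed by the other estimate's left-hand side once the Carleman parameter $s$ is large. The crucial feature that lets the argument close is that $u \equiv 0$ and $v \equiv 0$ on $\omega \times (0,T)$, so every observation term on $\omega$ in both inequalities vanishes identically.

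Concretely, I would first apply the Schr\"odinger Carleman inequality \eqref{D61} with $p = u$ and source $g := i\partial_x v + V_1 u + V_2 \overline{u}$. Since $V_1, V_2$ are bounded, this produces
\begin{equation*}
\int_0^T\!\!\int_0^{2\pi}\!\! \bigl[s\varphi|u_x|^2 + (s\varphi)^3|u|^2\bigr]e^{-2s\varphi}\,dx\,dt \le C\int_0^T\!\!\int_0^{2\pi}\!\! \bigl(|\partial_x v|^2 + |u|^2\bigr)e^{-2s\varphi}\,dx\,dt.
\end{equation*}
Then I would apply the KdV Carleman inequality \eqref{D60} with $q = v$ and source $f := -\mu\,\partial_x v + V_3\,\partial_x u + \operatorname{Re}(\partial_x u)$, obtaining
\begin{equation*}
\int_0^T\!\!\int_0^{2\pi}\!\! \bigl[s\varphi|v_{xx}|^2 + (s\varphi)^3|v_x|^2 + (s\varphi)^5|v|^2\bigr]e^{-2s\varphi}\,dx\,dt \le C\int_0^T\!\!\int_0^{2\pi}\!\! \bigl(|\partial_x u|^2 + |\partial_x v|^2\bigr)e^{-2s\varphi}\,dx\,dt.
\end{equation*}
Summing the two inequalities and taking $s \ge s_0$ sufficiently large, the potential term $|u|^2$ is absorbed by $(s\varphi)^3|u|^2$, the coupling $|\partial_x v|^2$ (including the drift contribution) is absorbed by $(s\varphi)^3|v_x|^2$, and the coupling $|\partial_x u|^2$ is absorbed by $s\varphi|u_x|^2$. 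One then concludes
\begin{equation*}
\int_0^T\!\!\int_0^{2\pi}\!\! \bigl[(s\varphi)^3|u|^2 + (s\varphi)^5|v|^2\bigr]e^{-2s\varphi}\,dx\,dt = 0,
\end{equation*}
which forces $u \equiv v \equiv 0$ on any slab $\mathbb{T} \times [t_1, t_2] \subset (0, T)$ where $\varphi$ is bounded. Well-posedness of the Cauchy problem (Theorem~\ref{Teo-Ex}) then extends the vanishing to all of $\mathbb{T} \times (0, T)$.

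The main obstacle is precisely this absorption of coupling derivatives across two equations whose principal symbols have different orders. A single Carleman estimate could not close the argument, because the sources $i\partial_x v$ and $\partial_x u$ cross between the two equations and are of the same differential order as the principal gradient terms on the corresponding left-hand sides. The saving feature is the asymmetric powers of $s\varphi$ in the two inequalities: the KdV estimate delivers $(s\varphi)^3|v_x|^2$ on its left-hand side (more than enough to eat $|\partial_x v|^2$), while the Schr\"odinger estimate delivers $s\varphi|u_x|^2$ (still just enough to eat $|\partial_x u|^2$) once $s$ is taken above a threshold depending on $\|V_3\|_{L^\infty}$ and $|\mu|$. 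This asymmetry is what makes the combined scheme tailored exactly to the mixed-order coupling of the NLS--KdV system.
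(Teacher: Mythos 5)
Your proposal is correct and follows essentially the same route as the paper: the paper likewise sums the KdV Carleman estimate \eqref{D60} (applied to $v$ with source $f=V_3\partial_x u+\operatorname{Re}(\partial_x u)$) and the Schr\"odinger Carleman estimate \eqref{D61} (applied to $u$ with source $g=i\partial_x v+V_1u+V_2\overline{u}$), absorbs the zero- and first-order cross terms into the left-hand side for large $s$ exactly as you describe, and uses $u\equiv v\equiv 0$ on $\omega\times(0,T)$ to annihilate the observation terms. The only detail you gloss over is that the two estimates come with $x$-dependent weights, so the paper must pass to $\tilde{\varphi}=\min\varphi$ and $\hat{\varphi}=\max\varphi$ (citing \cite{Arauna,BousinesqR} for their compatibility) before the absorption can be carried out with a common exponential factor.
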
 
\begin{proof}
Putting together \eqref{D60} and \eqref{D61}, we have that 
\begin{equation}\label{D62}
\begin{split}
\int_0^T\!\! \int_0^{2\pi} &[s\tilde{\vf} \left|u_{x}\right|^{2}+(s\tilde{\vf})^{3} |u|^{2} ]e^{-2s\hat{\vf}}d x d t\\&+
\int_0^T\!\! \int_0^{2\pi} [s\tilde{\vf} |v_{xx}|^2 +(s\tilde{\vf} )^3 |v_x|^2 +(s\tilde{\vf})^5 |v|^2 ]e^{-2s\tilde{\vf}}dxdt\\\leq& \int_0^T\!\! \int_0^{2\pi}(|g|^2+|f|^2)e^{-2s \hat{\vf}}dxdt+C  \int_0^T\!\! \int_{\omega} [(s\vf )^{3}|u|^{2} + (s\vf)\left|\operatorname{Re}\left(u_{x}\right)\right|^{2}] e^{-2s\vf} d x d t
\\
&+ C\left(  \int_0^T\!\! \int_{\omega}[s\vf |v_{xx}|^2 +(s\vf )^3 |v_x|^2 +(s\vf )^5 |v|^2 ]e^{-2s\vf} dx dt \right),
\end{split}
\end{equation}
where $\tilde{\vf}=\min{\vf}$, $\hat{\vf}=\max{\vf}$, $g:= i \partial_{x}v + V_{1}(x,t)u + V_{2}(x,t)\overline{u}$ and $f:= V_{3}(t, x)\partial_{x}u + \operatorname{Re}(\partial_{x}u)$.  The properties of the $\tilde{\vf}$ and $\hat{\vf}$ can be seen in \cite{Arauna,BousinesqR}.

Now, note that the zero and first-order terms of the right-hand side of \eqref{D62} can be absorbed by the left-hand side. Moreover, the second order term $v_{xx}$ of the right-hand side \eqref{D62} can be treated as in \cite[Lemma 3.7]{BousinesqR}, resulting in the following estimate  \begin{equation}\label{D63} \int_0^T\!\! \int_0^{2\pi} [(s\tilde{\vf})^{3} |u|^{2}+(s\tilde{\vf})^5 |v|^2 ]e^{-2s\tilde{\vf}}dxdt \leq C  \int_0^T\!\! \int_{\omega} [(s\vf )^{3}|u|^{2} +(s\vf )^5 |v|^2 ]e^{-2s\vf} dx dt. \end{equation} Therefore, thanks to the hypothesis $u \equiv v \equiv 0$  in  $\omega \times (0, T)$, the right hand side of \eqref{D63} yields that $u \equiv v \equiv 0$ in $\mathbb{T} \times (0,T)$, and so the result follows.
\end{proof}

As a consequence of this unique continuation property, we have the following result.

\begin{corollary}\label{Uniq-Cont-Cor}
Let $(u, v) \in X^{0, \frac{1}{2}}_{T} \times Y^{0, \frac{1}{2}}_{T}$ be a solution of 
\begin{equation}\label{Uniq-Cont}
\left\{\begin{array}{ll}
i \partial_{t} u+\partial_{x}^{2} u=i \partial_{x} v+\beta|u|^{2} u, & (x,t)\in \mathbb{T} \times (0,T),\\
\partial_{t} v+\partial_{x}^{3} v+\frac{1}{2} \partial_{x}\left(v^{2}\right)+\mu \partial_{x} v=-R e\left(\partial_{x} u\right),  & (x,t)\in \mathbb{T} \times (0,T),\\
u(x,t)=0, \quad v(x,t)=c, &(x,t)\in \omega \times(0, T),
\end{array}\right.
\end{equation}
where $\omega \subset \mathbb{T}$ is a nonempty open set and $c$ is some real constant. Then, there exists a small $T > 0$, such that $u = 0$ on $\mathbb{T} \times (0, T)$ and $v = c$ on $\mathbb{T} \times (0, T)$.
\end{corollary}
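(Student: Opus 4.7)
\medskip
\noindent\textbf{Proof proposal.} The plan is to reduce the nonlinear statement to the linear Carleman-based unique continuation in Theorem \ref{Carleman}, after first upgrading $(u,v)$ to a smooth pair via Corollary \ref{Cor-Reg}. First, I would translate $v$ by the constant $c$: set $\widetilde v:=v-c$, so that both $u$ and $\widetilde v$ vanish on $\omega\times(0,T)$. Using $\partial_x v=\partial_x\widetilde v$ and $\partial_x(v^{2})=\partial_x(\widetilde v^{\,2})+2c\,\partial_x\widetilde v$, the pair $(u,\widetilde v)$ solves
\begin{equation*}
\left\{\begin{array}{l}
i\partial_t u+\partial_x^{2}u = i\partial_x\widetilde v+\beta|u|^{2}u,\\[2pt]
\partial_t\widetilde v+\partial_x^{3}\widetilde v+\tfrac{1}{2}\partial_x(\widetilde v^{\,2})+(\mu+c)\,\partial_x\widetilde v = -\operatorname{Re}(\partial_x u),
\end{array}\right.
\end{equation*}
which is structurally the same NLS--KdV system with drift parameter $\mu+c$ in place of $\mu$.

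Second, since $u\equiv 0$ and $\widetilde v\equiv 0$ on $\omega\times(0,T)$ they are trivially of class $C^{\infty}$ there, so Corollary \ref{Cor-Reg} (iterated on the regularity index $r$) upgrades $(u,\widetilde v)$ to a smooth pair on all of $\mathbb{T}\times(0,T)$. This is the crucial input that allows me to freeze the nonlinearities as smooth potentials: taking $V_{1}(x,t):=\beta|u(x,t)|^{2}$, $V_{2}\equiv 0$ and $V_{3}(x,t):=-\widetilde v(x,t)$, all three belong to $C^{\infty}(\mathbb{T}\times(0,T))$, and after this substitution $(u,\widetilde v)$ satisfies a linear coupled system of precisely the type covered by Theorem \ref{Carleman} (modulo the harmless relabeling $\mu\leftarrow\mu+c$ and a sign flip in the coupling term $\operatorname{Re}(\partial_x u)$, neither of which affects the underlying Carleman estimates).

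Third, I would invoke Theorem \ref{Carleman}: since $u$ and $\widetilde v$ vanish on $\omega\times(0,T)$ and the linearized system has $C^{\infty}$ coefficients, its conclusion yields $u\equiv 0$ and $\widetilde v\equiv 0$ on $\mathbb{T}\times(0,T)$, that is $v\equiv c$ on $\mathbb{T}\times(0,T)$. The main obstacle I anticipate is the slight structural mismatch between the first-order potential $V_{3}\,\partial_x\widetilde v$ that naturally appears in our linearized KdV equation and the $V_{3}\,\partial_x u$ term featured in Theorem \ref{Carleman}: this is a bookkeeping matter, since a smooth first-order perturbation of $\partial_x^{3}$ is of lower order than the principal symbol and is absorbed by the left-hand side of the weighted estimate \eqref{D60} once the Carleman parameter $s$ is taken large enough, so that the coupled estimate \eqref{D62} and the absorption argument leading to \eqref{D63} close without essential modification.
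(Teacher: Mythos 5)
Your proof is correct, but it takes a genuinely different route from the paper's. The paper also begins by invoking Corollary \ref{Cor-Reg} to upgrade $(u,v)$ to $C^{\infty}(\mathbb{T}\times(0,T))$, but it then \emph{differentiates} the system: it applies Theorem \ref{Carleman} first to $(w,z)=(\partial_t u,\partial_t v)$ and then to $(\partial_x u,\partial_x v)$ --- both pairs vanish on $\omega\times(0,T)$ precisely because $u\equiv 0$ there and $v\equiv c$ is constant there --- concluding that $u$ and $v$ are constant on $\mathbb{T}\times(0,T)$, whence $u=0$ and $v=c$ from the data on $\omega$. You instead translate $v$ by the constant $c$ so that $(u,\widetilde v)$ itself vanishes on $\omega\times(0,T)$, and apply the Carleman-based unique continuation once, directly, with frozen coefficients $V_1=\beta|u|^2$, $V_2=0$, $V_3=-\widetilde v$. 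The trade-off is that your route requires Theorem \ref{Carleman} to tolerate a first-order perturbation $V_3\,\partial_x\widetilde v$ (plus the extra drift $c\,\partial_x\widetilde v$) of the KdV operator, rather than the $V_3\,\partial_x u$ coupling term as literally stated; your absorption argument for this is sound, since the weight $(s\varphi)^3|v_x|^2$ on the left of \eqref{D60} dominates $C|v_x|^2e^{-2s\varphi}$ for $s$ large. It is worth noting that the paper's own differentiated system is not literally of the form in Theorem \ref{Carleman} either: $\tfrac{1}{2}\partial_x\partial_t(v^2)=v\,\partial_x z+(\partial_x v)z$ produces exactly the same kind of first-order term $v\,\partial_x z$, which the paper absorbs silently. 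So the obstacle you flag is common to both arguments; your version dispatches it explicitly and is somewhat shorter, using one application of the unique continuation property instead of two and avoiding the final step of passing from constancy of $(u,v)$ back to the values $0$ and $c$.
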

\begin{proof}
First note that using Corollary \ref{Cor-Reg} we have that $u, v \in C^{\infty}(\mathbb{T} \times (0, T))$ then, considering $(w,z) = (\partial_{t}u, \partial_{t}v)$ we get 
\begin{equation*}
\left\{\begin{array}{ll}
 i \partial_{t}w + \partial_{x}^{2}w = i \partial_{x}z + V_{1}(t,x)w + V_{2}(t, x)\overline{w},& (x,t)\in \mathbb{T} \times (0,T),\\
 \partial_{t}z + \partial_{x}^{3}z + \mu \partial_{x}z + V_{3}(t, x)z = Re(\partial_{x}w),& (x,t)\in \mathbb{T} \times (0,T),\\
 z(x,t)= w(x,t) = 0,&(x,t)\in \omega \times (0, T).
\end{array}\right.
\end{equation*}
Then the unique continuation property given by Theorem \ref{Carleman} implies that $w = z = 0$. In analogous way, we can consider $(\widetilde{w}, \widetilde{z}) = (\partial_{x}u, \partial_{x}v)$ and then we get that $\partial_{x}u = \partial_{x}v = 0$ in $\mathbb{T} \times (0, T)$. Hence we can conclude that both functions $u, v$ are constant and the result follows.
\end{proof}

 \subsection*{Acknowledgments} We greatly appreciate the referees’ careful reading and helpful suggestions. Capistrano--Filho was supported by CNPq 307808/2021-1, CAPES-PRINT 88881.311964/2018-01, CAPES-MATHAMSUD 88881.520205/2020-01, MATHAMSUD 21- MATH-03 and Propesqi (UFPE).  Pampu was supported by PNPD-CAPES grant number 88887.351630/2019-00. This work was done during the postdoctoral visit of the second author at the Universidade Federal de Pernambuco, who thanks the host institution for the warm hospitality.

\end{document}